\newtheorem{theorem}{Theorem}[section]
\theoremstyle{plain}
\newtheorem{corollary}{Corollary}[section]
\newtheorem{definition}{Definition}[section]
\newtheorem{example}{Example}[section]
\newtheorem{lemma}{Lemma}[section]
\newtheorem{proposition}{Proposition}[section]
\newtheorem{remark}{Remark}[section]
\numberwithin{equation}{section}
\begin{document}
\title[$\mathbb{R}$-group actions on locally compact spaces]{Absorptive
continuous $\mathbb{R}$-group actions on locally compact spaces}
\author{Gabriel NGUETSENG}
\dedicatory{\emph{University of Yaounde I}\\
\emph{\ Department of Mathematics}\\
\emph{\ P.O.Box 812, Yaounde, Cameroon}\\
\emph{\ E-mail: \ nguetsengg@yahoo.fr}\\
\emph{Telephone: 23799737978}}

\begin{abstract}
We introduce the notion of an $\mathbb{R}$-group of which the classical
groups $\mathbb{R}$, $\mathbb{Z}$ and $\mathbb{R}_{+}^{\ast }$ are typical
examples, and we study flows $\left( X,\mathcal{H}\right) $, where $X$ is a
locally compact space and $\mathcal{H}$ is a continuous $\mathbb{R}$-group
action on $X$ with the further property that any compact set is \textit{%
absorbed }(in the ordinary meaning in use in the theory of topological
vector spaces) by any neighbourhood of some characteristic point in $X$
called the center of $\mathcal{H}$. The case where $X$ is a locally compact
abelian group is also considered. We are particularly interested in
discussing the asymptotic properties of $\mathcal{H}$, which is made
possible by proving a deep theorem about the existence of nontrivial $%
\mathcal{H}$-homogeneous positive measures on $X$. Also, a close connection
with homogenization theory is pointed out. It appears that the present paper
lays the foundation of the mathematical framework that is needed to
undertake a systematic study of homogenization problems on manifolds, Lie
groups included.
\end{abstract}

\thanks{MSC (2010): 37B05, 43A07, 46J10, 28A25, 28A50, 26E60, 54D45}
\maketitle

\section{INTRODUCTION}

The study of group actions on sets is diversely interpreted according to how
the former and the latter are each structured. For example classical measure
theoretic ergodic theory is based on the notion of a probability space $%
\left( X,\mathcal{B},m\right) $, i.e., $X$ is a given set, $\mathcal{B}$ a $%
\sigma $-algebra on $X$, and $m$ a probability measure on $\left( X,\mathcal{%
B}\right) $. Here the acting group is generally the additive group $\mathbb{Z%
}$ of integers, and the action consists of the iterates (in a suitable
sense) $T^{n}$ $\left( n\in \mathbb{Z}\right) $ of an $m$-preserving
transformation $T:X\rightarrow X$. Though our purpose is not to discourse on
ergodic theory, attention must be drawn to two historical results that are
knowledged as being the keystone of the theory, namely Birkhoff's individual
theorem and von Neumann mean ergodic theorem. See, e.g., \cite[15, 25]{bib12}
for more details and references on ergodic theory. One could provide the set 
$X$ with a topological structure rather than a measure theoretic one and on
the other hand, consider a topological group $E$ acting continuously on $X$.
This would lead to what is broadly interpreted as topological dynamics.
However, some terminological details seem useful. By an action of a
(multiplicatively written commutative) group $E$ on a set $X$ is understood
here a family $\mathcal{H}=\left( H_{\varepsilon }\right) _{\varepsilon \in
E}$ of permutations of $X$ such that%
\begin{equation*}
H_{\varepsilon }\circ H_{\varepsilon ^{\prime }}=H_{\varepsilon \varepsilon
^{\prime }}\text{ and }H_{\varepsilon =e}\left( x\right) =x\qquad \left(
\varepsilon ,\varepsilon ^{\prime }\in E\text{, }x\in X\right) \text{,}
\end{equation*}%
where $\circ $ and $e$ denote usual composition and the identity of $E$,
respectively. If $E$ is a topological group and $X$ is a topological space,
the action $\mathcal{H}$ is termed continuous if the map $\left( \varepsilon
,x\right) \rightarrow H_{\varepsilon }\left( x\right) $ sends continuously $%
E\times X$ (with the product topology) into $X$, in which case the pair $%
\left( X,\mathcal{H}\right) $ is called a \textit{flow}. The reader
interested in topological dynamics is referred, e.g., to \cite[11]{bib7}.
Assuming now $X$ to be a smooth manifold and $\mathcal{H}$ to be, say a
differentiable action of $\mathbb{R}$ on $X$, one arrives at what is
commonly referred to as differentiable dynamics (see, e.g., \cite[23, 24]%
{bib22}). It is worth mentioning in passing that in both topological and
differentiable dynamics, $X$ is generally assumed to be compact, which
allows a most highly development of the theory.

This study is concerned with absorptive continuous $\mathbb{R}$-group
actions on locally compact spaces. Crudely speaking, calling $E$ an $\mathbb{%
R}$-group if $E=\mathbb{R}$, $\mathbb{Z}$ or $\mathbb{R}_{+}^{\ast }$ (the
multiplicative group of positive reals), we study flows $\left( X,\mathcal{H}%
\right) $\ where $X$ is a locally compact space and $\mathcal{H=}\left(
H_{\varepsilon }\right) _{\varepsilon \in E}$ is a continuous action of an $%
\mathbb{R}$-group $E$ on $X$ with the further property that any compact set
is \textit{absorbed }(in a sense to be specified later) by any neighbourhood
of some characteristic point in $X$ called the center of $\mathcal{H}$. In
view of this setting, the present study should naturally lie within the
scope of topological dynamics and one could be interested in discussing
classical notions such as, e.g., minimal sets and minimal flows,
equicontinuity, proximality, distality, etc., for which we refer to \cite[%
11, 13]{bib2}. The orientation that commands our attention is quite another
matter. In fact, the specificity of this study is due to the deep impact of
the absorptiveness hypothesis in so far as the latter turns out to underlie
a very special setting beyond classical dynamical theories. For example,
whereas invariant positive measures and compact spaces play fundamental
roles in classical topological dynamics, in the present context there exists
no nontrivial invariant positive measure, and on the other hand the
compactness assumption is proscribed. All that will be clarified later.

One of our main objects is to investigate the asymptotic properties of $%
\mathcal{H=}\left( H_{\varepsilon }\right) _{\varepsilon \in E}$ for small
(or large) $\varepsilon $. So we are led to consider functions of the form $%
u\circ H_{\varepsilon }$, that is, such functions as $x\rightarrow u\left(
H_{\varepsilon }\left( x\right) \right) $ of $X$ into $\mathbb{C}$ (the
complex numbers), where $u$ is a real or complex function on $X$. However,
prior to this, one needs to provide $X$ with a positive Radon measure such
that if $u$ lies in $L^{p}\left( X\right) $ (resp. $L_{loc}^{p}\left(
X\right) $), $1\leq p\leq \infty $, then so also does $u\circ H_{\varepsilon
}$ for each $\varepsilon \in E$. One measure one naturally has in mind is an 
$\mathcal{H}$-invariant positive measure on $X$, that is, a positive Radon
measure $\lambda $ on $X$ such that $H_{\varepsilon }\left( \lambda \right)
=\lambda $ for all $\varepsilon \in E$ (see Remark \ref{rem1.1} below).
Unfortunately, as will be seen later, the only nonzero $\mathcal{H}$%
-invariant positive Radon measure on $X$ is the Dirac measure $\delta
_{\omega }$, where $\omega $ is the center of the absorptive action $%
\mathcal{H}$. In default of $\mathcal{H}$-invariant positive Radon measures
we consider the larger family of positive Radon measures $\lambda $ on $X$
satisfying:%
\begin{equation*}
\text{(H)\quad To each }\varepsilon \mathcal{\in }E\text{ corresponds some
constant }c\left( \varepsilon \right) >0\text{ such that }
\end{equation*}%
\begin{equation*}
H_{\varepsilon }\left( \lambda \right) \mathcal{=}c\left( \varepsilon
\right) \lambda \text{.}
\end{equation*}

\begin{remark}
\emph{\label{rem1.1} It is useful, perhaps, to recall that }$H_{\varepsilon
}\left( \lambda \right) $\emph{\ denotes the Radon measure on }$X$\emph{\
defined by }$H_{\varepsilon }\left( \lambda \right) \left( \varphi \right)
=\int \varphi \left( H_{\varepsilon }\left( x\right) \right) d\lambda \left(
x\right) $\emph{\ for }$\varphi \in \mathcal{K}\left( X\right) $\emph{\
(space of compactly supported continuous complex functions on }$X$\emph{).
In order that a complex function }$u$\emph{\ on }$X$\emph{\ be integrable
for }$H_{\varepsilon }\left( \lambda \right) $\emph{, it is necessary and
sufficient that the function }$u\circ H_{\varepsilon }$\emph{\ be integrable
for }$\lambda $\emph{, in which case }$\int u\left( H_{\varepsilon }\left(
x\right) \right) d\lambda \left( x\right) =\int u\left( x\right)
dH_{\varepsilon }\left( \lambda \right) \left( x\right) $\emph{.}
\end{remark}

A measure having property (H) is called \textit{homogeneous }for $\mathcal{H}
$, or $\mathcal{H}$-\textit{homogeneous}, or simply \textit{homogeneous}
when there is no danger of confusion. One major result achieved here is that
there is always some nontrivial homogeneous positive Radon measure on $X$
provided each point in $X$ has a countable base of neighbourhoods. It is not
out of interest to mention in passing that homogeneous measures are
particular cases of so-called quasi-invariant measures (see \cite{bib8},
p.23, and \cite{bib25}, p.237).

The concept of mean value is often crucial in most of the situations dealing
with asymptotic studies. It is shown here that $\mathcal{H}$ generates a
mean value on $X$ which extends various classical mean values available in
mathematical analysis. Finally, other important notions attached to
absorptive continuous $\mathbb{R}$-group actions on $X$, and which are
discussed here, are the notion of a homogenization algebra and that of
sigma-convergence introduced earlier in the context of numerical spaces (see 
\cite{bib18}). We prove a compactness result generalizing the two-scale
compactness theorem considered as the corner stone of a well-known
homogenization approach (see \cite[17, 19]{bib1}).

The rest of the study is organized as follows. Section 2 is concerned with
setting basic definitions and presenting preliminary results. Also, various
notions such as $\mathbb{R}$-groups and absorptive actions are illustrated
by concrete examples. In Section 3 we introduce homogeneous measures and we
establish, among other things, the existence of such measures. Section 4 is
devoted to the study of the mean value on $\left( X,\mathcal{H},\lambda
\right) $, where $X$ is a suitable locally compact space, $\mathcal{H}$ an
absorptive continuous $\mathbb{R}$-group action on $X$, and $\lambda $ a
nontrivial $\mathcal{H}$-homogeneous positive measure on $X$. Section 5 is
concerned with the study of absorptive continuous $\mathbb{R}$-group actions
on locally compact abelian groups. Finally, in Section 6 we discuss the
notion of a homogenization algebra and the subsequent concept of
sigma-convergence. We prove a compactness result generalizing the two-scale
compactness theorem and hence pointing out a close connection between
homogenization theory and absorptive continuous $\mathbb{R}$-group actions.

Except where otherwise stated, vector spaces throughout are considered over
the complex field, $\mathbb{C}$, and scalar functions assume complex values.
We will follow a standard notation. For convenience we will merely write $%
L^{p}\left( X\right) $ (resp. $L_{loc}^{p}\left( X\right) $) in place of $%
L^{p}\left( X;\mathbb{C}\right) $ (resp. $L_{loc}^{p}\left( X;\mathbb{C}%
\right) $), $1\leq p\leq +\infty $, where $X$ is a locally compact space
equipped with some\ positive Radon measure. We refer to \cite[5, 10]{bib4}
for integration theory.

\section{DEFINITIONS. BASIC RESULTS. EXAMPLES}

\subsection{\textbf{Absorptive continuous }$\mathbb{R}$\textbf{-group actions%
}. \textbf{Basic results}}

Throughout \ this subsection, $X$ denotes a locally compact space not
reduced to one point.

We start with the definition of an $\mathbb{R}$-group.

\begin{definition}
\emph{\label{def2.1} By an} $\mathbb{R}$-\textit{group }\emph{is meant any
abelian group }$E$ \emph{made up of real numbers, containing all positive
integers and satisfying the following conditions, where the group operation
is denoted multiplicatively (but the dot is omitted):}

\emph{(RG)}$_{1}$\emph{\ Provided with the natural order on} $\mathbb{R}$, $%
E $ \emph{is a totally ordered group.}

\emph{(RG)}$_{2}$\emph{\ Endowed with the relative usual topology on} $%
\mathbb{R}$, $E$ \emph{is a locally compact group.}

\emph{(RG)}$_{3}$\emph{\ There exists at least one nonconstant continuous
group} \emph{homomorphism }$h:E\rightarrow \mathbb{R}_{+}^{\ast }$ \emph{%
with the property that for each }$\alpha \mathfrak{\in }E$\emph{, the set} $%
E_{\alpha }\mathfrak{=}\left\{ \varepsilon \in E:\varepsilon \geq \alpha
\right\} $ \emph{is integrable for the measure }$h.m$\emph{, where }$m$\emph{%
\ denotes Haar measure on }$E$\emph{.}
\end{definition}

\textbf{Notation}. Given an $\mathbb{R}$-group $E$, we will denote by $e$
the identity of $E$, and by $\theta $ the greatest lower bound of $E$ in $%
\overline{\mathbb{R}}=\mathbb{R\cup }\left\{ -\infty ,+\infty \right\} $.
The inverse of $\varepsilon \in E$ will be denoted by $\varepsilon ^{-1}$,
i.e., $\varepsilon ^{-1}\varepsilon =e$.\bigskip

We now present a few typical $\mathbb{R}$-groups.

\begin{example}
\emph{\label{ex2.1} (The additive group} $\mathbb{R}$\emph{). The additive
group }$\mathbb{R}$ \emph{is the fundamental }$\mathbb{R}$\emph{-group.
Conditions (RG)}$_{1}$\emph{\ and (RG)}$_{2}$ \emph{are evident; condition
(RG)}$_{3}$\emph{\ is satisfied with }$h\left( \varepsilon \right) =\exp
\left( -r\varepsilon \right) $ $\left( \varepsilon \in \mathbb{R}\right) $, 
\emph{where} $r\in \mathbb{R}$, $r>0$ \emph{(note that Haar measure on} $%
\mathbb{R}$ \emph{is just Lebesgue measure). We have here }$e=0$ \emph{and} $%
\theta =-\infty $.
\end{example}

\begin{example}
\emph{\label{ex2.2} (The multiplicative group $\mathbb{R}_{+}^{\ast }$). The
multiplicative group }$\mathbb{R}_{+}^{\ast }$ \emph{of positive real
numbers is an} $\mathbb{R}$\emph{-group. We only need to check (RG)}$_{3}$, 
\emph{the remaining conditions of Definition \ref{def2.1} being obvious.
Recalling that Haar measure on} $\mathbb{R}_{+}^{\ast }$ \emph{is given by} $%
m\left( \varphi \right) =\int_{0}^{+\infty }\frac{\varphi \left( \varepsilon
\right) }{\varepsilon }d\varepsilon $, $\varphi \in \mathcal{K}\left( 
\mathbb{R}_{+}^{\ast }\right) $ \emph{(space of compactly supported
continuous complex functions on} $\mathbb{R}_{+}^{\ast }$\emph{), we see
that (RG)}$_{3}$ \emph{follows with }$h\left( \varepsilon \right) =\frac{1}{%
\varepsilon ^{r}}$\emph{\ }$\left( \varepsilon >0\right) $\emph{, where }$%
r\in \mathbb{R}$, $r>0$\emph{. Here }$e=1$\emph{\ and }$\theta =0$\emph{.}
\end{example}

\begin{example}
\emph{\label{ex2.3} (The discrete additive\ group $\mathbb{Z}$). The
discrete additive group }$\mathbb{Z}$ \emph{is an} $\mathbb{R}$\emph{-group.
Indeed, (RG)}$_{1}$\emph{-(RG)}$_{2}$ \emph{are evident and (RG)}$_{3}$\emph{%
\ follows by taking} $h\left( n\right) =a^{n}$ $\left( n\in \mathbb{Z}%
\right) $ \emph{with }$0<a<1$\emph{\ and recalling that Haar measure on} $%
\mathbb{Z}$\emph{\ is given by} $m\left( \varphi \right) =\sum_{n\in \mathbb{%
Z}}\varphi \left( n\right) $ $\left( \varphi \in \mathcal{K}\left( \mathbb{Z}%
\right) \right) $. \emph{Here }$e=0$\emph{\ and }$\theta =-\infty $\emph{.}
\end{example}

At the present time, let $E$ be an $\mathbb{R}$-group, and let $\mathcal{H=}%
\left( H_{\varepsilon }\right) _{\varepsilon \in E}$ be an action of $E$ on $%
X$.

\begin{definition}
\label{def2.2} \emph{Let }$T$\emph{\ and }$S$\emph{\ be two subsets of }$X$%
\emph{. One says that:}

\emph{(i) }$T$\emph{\ }\textit{absorbs}\emph{\ }$S$\emph{\ if there is some }%
$\alpha \in E$\emph{\ such that }$H_{\varepsilon ^{-1}}\left( S\right)
\subset T$\emph{\ for }$\varepsilon \leq \alpha $\emph{;}

\emph{(ii) }$T$\emph{\ is }\textit{absorbent}\emph{\ if }$T$\emph{\ absorbs
any singleton }$\left\{ x\right\} $\emph{\ }$\left( x\in X\right) $\emph{;}

\emph{(iii) }$T$\emph{\ is }\textit{balanced}\emph{\ if }$H_{\varepsilon
^{-1}}\left( T\right) \subset T$\emph{\ for }$\varepsilon \leq e$\emph{.}
\end{definition}

\begin{remark}
\emph{\label{rem2.1} We note in passing that this terminology is justifiably
borrowed from the theory of Topological Vector Spaces. On the other hand,
one should have said }$\mathcal{H}$-\textit{absorbs, }$\mathcal{H}$-\textit{%
absorbent, }$\mathcal{H}$-\textit{balanced, }\emph{in order to emphasize
that these notions are intrinsically attached to }$\mathcal{H}$\emph{.
However, we will omit }$\mathcal{H}$\emph{\ as long as there is no danger of
confusion.}
\end{remark}

The verification of the following proposition is a routine exercise left to
the reader.

\begin{proposition}
\label{pr2.1} The following hold true:

(i) The set $T\subset X$ is balanced if and only if $\varepsilon \leq
\varepsilon ^{\prime }$ implies $H_{\varepsilon ^{\prime }}\left( T\right)
\subset H_{\varepsilon }\left( T\right) $.

(ii) If $T$ is balanced and if $H_{\alpha ^{-1}}\left( S\right) \subset T$
for some $\alpha \in E$, where $S\subset X$ is given, then $T$ absorbs $S$.

(iii) Any union (resp. intersection)\ of balanced sets is balanced.

(iv) Any finite intersection of absorbent sets is absorbent.
\end{proposition}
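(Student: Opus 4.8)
The plan is to dispose of the four assertions one after another, leaning throughout on two elementary facts. First, each $H_\varepsilon$ is a bijection of $X$ with inverse $H_{\varepsilon^{-1}}$ (since $\mathcal{H}$ is a group action), so $H_\varepsilon$ and $H_{\varepsilon^{-1}}$ preserve inclusions and commute with arbitrary unions and intersections. Second, the order arithmetic in the totally ordered abelian group $E$: for $\varepsilon,\varepsilon'\in E$ one has $\varepsilon\le\varepsilon'\iff e\le\varepsilon^{-1}\varepsilon'\iff(\varepsilon')^{-1}\le\varepsilon^{-1}$. A convenient preliminary remark, used repeatedly, is the restated form of balancedness: substituting $\gamma=\varepsilon^{-1}$ in Definition \ref{def2.2}(iii) and noting $\varepsilon\le e\iff\gamma\ge e$, a set $T$ is balanced if and only if $H_\gamma(T)\subset T$ for every $\gamma\ge e$.

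For (i), suppose first $T$ is balanced and $\varepsilon\le\varepsilon'$. Applying the bijection $H_{\varepsilon^{-1}}$ and using $H_{\varepsilon^{-1}}\circ H_{\varepsilon'}=H_{\varepsilon^{-1}\varepsilon'}$ together with $H_{\varepsilon^{-1}}\circ H_\varepsilon=\mathrm{id}$, the inclusion $H_{\varepsilon'}(T)\subset H_\varepsilon(T)$ is equivalent to $H_{\varepsilon^{-1}\varepsilon'}(T)\subset T$; since $\varepsilon^{-1}\varepsilon'\ge e$, this holds by the restated balancedness. Conversely, if $\varepsilon\le\varepsilon'$ always forces $H_{\varepsilon'}(T)\subset H_\varepsilon(T)$, then taking $\varepsilon=e$ and $\varepsilon'=\gamma$ for an arbitrary $\gamma\ge e$ gives $H_\gamma(T)\subset H_e(T)=T$, so $T$ is balanced. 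For (ii), I claim the very same $\alpha$ witnesses that $T$ absorbs $S$: for $\varepsilon\le\alpha$ we have $\varepsilon^{-1}\alpha\ge e$, whence $H_{\varepsilon^{-1}}(S)=H_{\varepsilon^{-1}\alpha}\big(H_{\alpha^{-1}}(S)\big)\subset H_{\varepsilon^{-1}\alpha}(T)\subset T$, the last inclusion by part (i) (with $e$ and $\varepsilon^{-1}\alpha$ in place of $\varepsilon$ and $\varepsilon'$), or directly by the restated balancedness.

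Part (iii) is then immediate: for $\varepsilon\le e$, $H_{\varepsilon^{-1}}\big(\bigcup_i T_i\big)=\bigcup_i H_{\varepsilon^{-1}}(T_i)\subset\bigcup_i T_i$ and $H_{\varepsilon^{-1}}\big(\bigcap_i T_i\big)\subset\bigcap_i H_{\varepsilon^{-1}}(T_i)\subset\bigcap_i T_i$ whenever each $T_i$ is balanced. For (iv), given $x\in X$ and absorbent sets $T_1,\dots,T_n$, choose $\alpha_k\in E$ with $H_{\varepsilon^{-1}}(\{x\})\subset T_k$ for all $\varepsilon\le\alpha_k$; since $E$ is totally ordered and $n$ is finite, $\alpha:=\min\{\alpha_1,\dots,\alpha_n\}$ lies in $E$, and $\varepsilon\le\alpha$ then forces $H_{\varepsilon^{-1}}(\{x\})\subset\bigcap_k T_k$, so $\bigcap_k T_k$ is absorbent.

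I do not anticipate a genuine obstacle here — the statement is flagged as routine — but the one spot that really needs care is keeping the order-group identities straight (in particular, not conflating $\varepsilon^{-1}\le(\varepsilon')^{-1}$ with the opposite inequality when translating $\varepsilon\le\varepsilon'$), and, in (iv), observing that the total ordering of $E$ is exactly what lets one replace finitely many absorption thresholds $\alpha_k$ by a single $\alpha\in E$; this is precisely why the finiteness hypothesis in (iv) cannot be dropped.
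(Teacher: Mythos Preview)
Your proof is correct and is precisely the routine verification the paper has in mind; indeed the paper gives no proof at all, stating explicitly that ``the verification of the following proposition is a routine exercise left to the reader.'' Your handling of the order arithmetic in the totally ordered group $E$ (in particular the equivalence $\varepsilon\le e\iff\varepsilon^{-1}\ge e$ and the reformulation of balancedness as $H_\gamma(T)\subset T$ for $\gamma\ge e$) is exactly the right bookkeeping, and the four parts follow cleanly.
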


We turn now to the concept of absorptiveness.

\begin{definition}
\emph{\label{def2.3}} \emph{The action} $\mathcal{H}$ \emph{is termed }%
\textit{absorptive}\emph{\ if there exists some }$\omega \in X$\emph{\ with
the following property:}

\emph{(ABS)\quad For any neighbourhood }$V$\emph{\ of }$\omega $\emph{\ and
for any }$x\in X$\emph{, there are some neighbourhood }$U$\emph{\ of }$x$%
\emph{\ and some }$\alpha \in E$\emph{\ such that }$H_{\varepsilon
^{-1}}\left( U\right) \subset V$\emph{\ for }$\varepsilon \leq \alpha $\emph{%
.}
\end{definition}

\begin{remark}
\emph{\label{rem2.2}} \emph{Suppose a point }$\omega $\emph{\ in }$X$\emph{\
satisfies (ABS). Then, for each }$x\in X$\emph{, we have }$\lim_{\varepsilon
\rightarrow \theta }H_{\varepsilon ^{-1}}\left( x\right) =\omega $\emph{\
(i.e., as }$\varepsilon \rightarrow \theta $\emph{, }$H_{\varepsilon
^{-1}}\left( x\right) $\emph{\ tends to }$\omega $\emph{\ in }$X$\emph{).}
\end{remark}

Before we proceed any further let us prove one simple but basic result.

\begin{proposition}
\label{pr2.2} Suppose the action $\mathcal{H}$ is continuous and absorptive.
Let $\omega $ satisfy \emph{(ABS). }Then:

(i) $\omega $ is invariant for $\mathcal{H}$, i.e., $H_{\varepsilon }\left(
\omega \right) =\omega $ for all $\varepsilon \in E$;

(ii) $\omega $ is unique.
\end{proposition}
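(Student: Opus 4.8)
The plan is to deduce both assertions from Remark \ref{rem2.2}, which already gives us $\lim_{\varepsilon\to\theta}H_{\varepsilon^{-1}}(x)=\omega$ for every $x\in X$, together with the group law $H_\varepsilon\circ H_{\varepsilon'}=H_{\varepsilon\varepsilon'}$ and the continuity of each individual map $H_\varepsilon$ (a consequence of continuity of the action).

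For (i), fix $\varepsilon_0\in E$ and apply $H_{\varepsilon_0}$ to the net $\bigl(H_{\varepsilon^{-1}}(\omega)\bigr)_{\varepsilon\to\theta}$. On one hand, since $H_{\varepsilon_0}$ is continuous and $H_{\varepsilon^{-1}}(\omega)\to\omega$, we get $H_{\varepsilon_0}\bigl(H_{\varepsilon^{-1}}(\omega)\bigr)\to H_{\varepsilon_0}(\omega)$. On the other hand, $H_{\varepsilon_0}\circ H_{\varepsilon^{-1}}=H_{\varepsilon_0\varepsilon^{-1}}=H_{(\varepsilon\varepsilon_0^{-1})^{-1}}$, and as $\varepsilon\to\theta$ the element $\varepsilon\varepsilon_0^{-1}$ also tends to $\theta$ (here I would note that multiplication by the fixed positive real $\varepsilon_0^{-1}$ is an order isomorphism of $E$ onto itself sending the lower end $\theta$ to $\theta$), so $H_{(\varepsilon\varepsilon_0^{-1})^{-1}}(\omega)\to\omega$ by Remark \ref{rem2.2} applied to the point $\omega$ itself. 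Since $X$ is locally compact, hence Hausdorff, limits of nets are unique, and we conclude $H_{\varepsilon_0}(\omega)=\omega$. As $\varepsilon_0$ was arbitrary, $\omega$ is invariant.

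For (ii), suppose $\omega'$ also satisfies (ABS). Apply Remark \ref{rem2.2} to $\omega$, but with the point $x=\omega'$: we get $H_{\varepsilon^{-1}}(\omega')\to\omega$ as $\varepsilon\to\theta$. But by part (i) applied to $\omega'$ (which we have just shown is invariant), $H_{\varepsilon^{-1}}(\omega')=\omega'$ for every $\varepsilon$, so the constant net $(\omega')$ converges to $\omega$; by Hausdorffness, $\omega'=\omega$.

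The only genuinely delicate point is the claim that $\varepsilon\to\theta$ forces $\varepsilon\varepsilon_0^{-1}\to\theta$, i.e., that right translation by a fixed element of the $\mathbb{R}$-group is compatible with the order-theoretic notion of "tending to the infimum $\theta$" used in Remark \ref{rem2.2}; this is where conditions (RG)$_1$ and (RG)$_2$ enter, since $E$ is a totally ordered topological group sitting inside $\mathbb{R}$ and translations are order-preserving homeomorphisms fixing the endpoint $\theta\in\overline{\mathbb{R}}$. Everything else is a routine use of continuity and uniqueness of limits in a Hausdorff space.
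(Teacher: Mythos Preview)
Your proof is correct and follows essentially the same route as the paper's: apply a fixed $H_{\varepsilon_0}$ to the convergent net from Remark~\ref{rem2.2}, identify the composite via the group law as another instance of that same net (after reparametrizing $\varepsilon\mapsto\varepsilon\varepsilon_0^{-1}$), and invoke Hausdorffness to equate the two limits. The only cosmetic difference is that the paper runs the argument at an arbitrary point $x$ whereas you specialize to $x=\omega$; both work, and your treatment of part~(ii) spells out exactly what the paper means by ``combining (i) with Remark~\ref{rem2.2}.''
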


\begin{proof}
For any arbitrarily fixed $r\mathfrak{\in }E$, we have $\lim_{\varepsilon
\rightarrow \theta }H_{r}\left( H_{\varepsilon ^{-1}}\left( x\right) \right)
=H_{r}\left( \omega \right) $, as is straightforward by Remark \ref{rem2.2}
and use of the fact that $H_{r}$ maps continuously $X$ into itself. But the
left-hand side of the preceding equality is precisely $\lim_{\varepsilon
\rightarrow \theta }H_{\left( \varepsilon r^{-1}\right) ^{-1}}\left(
x\right) =\omega $, thanks to Remark \ref{rem2.2}, once again. Hence (i)
follows. Finally, (ii) is immediate by combining (i) with Remark \ref{rem2.2}%
.
\end{proof}

\begin{definition}
\emph{\label{def2.4} Suppose the action }$\mathcal{H}$ \emph{is continuous
and absorptive.}

\emph{(i) The point }$\omega $\emph{\ satisfying (ABS) is called the }%
\textit{center}\emph{\ of }$\mathcal{H}$.

\emph{(ii)} \emph{A set }$T$\emph{\ is called }\textit{bounded}\emph{\ (for }%
$\mathcal{H}$\emph{) if }$T$\emph{\ is absorbed by any neighbourhood of }$%
\omega $\emph{.}

\emph{(iii) A set }$T$\emph{\ is called }\textit{elementary}\emph{\ (for} $%
\mathcal{H}$\emph{) if }$T$\emph{\ is a balanced relatively compact
neighbourhood of }$\omega $\emph{.}
\end{definition}

We are now ready to develop a number of basic results.

\begin{proposition}
\label{pr2.3} Suppose the action $\mathcal{H}$ is continuous and absorptive,
and let $\omega $ be its center. The following assertions are true:

(i) There is a neighbourhood base at $\omega $ made up of balanced absorbent
sets.

(ii) A set $T\subset X$ is bounded (for $\mathcal{H}$) if and only if it is
relatively compact.

(iii) Elementary sets (for $\mathcal{H}$) do exist. Furthermore, if $F$ is
one such set, then, as $n$ ranges over the positive integers, the sets $%
H_{n}\left( F\right) $ form a neighbourhood base at $\omega $ whereas the
sets $H_{n^{-1}}\left( F\right) $ form a covering of $X$.
\end{proposition}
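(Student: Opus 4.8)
The plan is to establish the three assertions in order, the later ones resting on the earlier ones together with Remark~\ref{rem2.2} (which gives $H_{\varepsilon ^{-1}}\left( x\right) \rightarrow \omega $ as $\varepsilon \rightarrow \theta $), Proposition~\ref{pr2.2} (so that $\omega $ is $\mathcal{H}$-invariant and every $H_{\varepsilon }$ is a homeomorphism of $X$ fixing $\omega $), and the local compactness of $X$. For (i), I would first note that \emph{every} neighbourhood $T$ of $\omega $ is already absorbent: given $x\in X$, Remark~\ref{rem2.2} supplies $\alpha \in E$ with $H_{\varepsilon ^{-1}}\left( x\right) \in T$ for all $\varepsilon \leq \alpha $, which says precisely that $T$ absorbs $\left\{ x\right\} $. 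So it suffices to show that an arbitrary neighbourhood $V$ of $\omega $ contains a balanced neighbourhood of $\omega $. Applying (ABS) at the point $\omega $ gives a neighbourhood $U$ of $\omega $ and some $\alpha \in E$ with $H_{\varepsilon ^{-1}}\left( U\right) \subset V$ whenever $\varepsilon \leq \alpha $; I would then set $W=\bigcup_{\varepsilon \leq \alpha }H_{\varepsilon ^{-1}}\left( U\right) $. One checks readily that $W\subset V$, that $W$ is a neighbourhood of $\omega $ (it contains $H_{\alpha ^{-1}}\left( U\right) $, a neighbourhood of $H_{\alpha ^{-1}}\left( \omega \right) =\omega $), and that $W$ is balanced: for $\delta \leq e$ one has $H_{\delta ^{-1}}\left( W\right) =\bigcup_{\varepsilon \leq \alpha }H_{\left( \varepsilon \delta \right) ^{-1}}\left( U\right) $, and $\varepsilon \leq \alpha $ together with $\delta \leq e$ force $\varepsilon \delta \leq \alpha $ in the totally ordered group $E$. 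The sets $W$ so produced form the required neighbourhood base at $\omega $.

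For (ii) I would argue both implications directly. If $T$ is relatively compact, then, for a given neighbourhood $V$ of $\omega $, I cover the compact set $\overline{T}$ by finitely many open sets $U_{x_{1}},\dots ,U_{x_{k}}$ coming from (ABS), with associated elements $\alpha _{x_{1}},\dots ,\alpha _{x_{k}}\in E$; putting $\alpha =\min_{i}\alpha _{x_{i}}$ gives $H_{\varepsilon ^{-1}}\left( \overline{T}\right) \subset V$ for $\varepsilon \leq \alpha $, so $V$ absorbs $T$ and $T$ is bounded. Conversely, if $T$ is bounded, local compactness furnishes a relatively compact neighbourhood $V_{0}$ of $\omega $; since $V_{0}$ absorbs $T$, there is $\alpha \in E$ with $H_{\alpha ^{-1}}\left( T\right) \subset V_{0}$, hence $T\subset H_{\alpha }\left( \overline{V_{0}}\right) $, which is compact (the image of a compact set under the homeomorphism $H_{\alpha }$), and therefore $\overline{T}$ is compact since $X$ is Hausdorff.

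For (iii), existence of elementary sets is immediate: by (i), shrink a relatively compact neighbourhood of $\omega $ to a balanced one, which remains relatively compact and hence a neighbourhood of $\omega $. Now fix an elementary set $F$. Each $H_{n}\left( F\right) $, $n\geq 1$, is a neighbourhood of $\omega $ since $H_{n}$ is a homeomorphism fixing $\omega $ (and, $F$ being balanced, the $H_{n}\left( F\right) $ even decrease and lie in $F$); moreover $F$ is bounded by (ii), so any neighbourhood $V$ of $\omega $ absorbs $F$, giving $\alpha \in E$ with $H_{\varepsilon ^{-1}}\left( F\right) \subset V$ for all $\varepsilon \leq \alpha $. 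The one genuinely non-formal step is to pass from $\alpha \in E$ to a positive integer: since $E$ consists of real numbers and contains every positive integer, the positive integers are cofinal upwards in $\left( E,\leq \right) $, so one may choose $n\geq 1$ with $n>\alpha ^{-1}$, i.e. $n^{-1}<\alpha $, and then $H_{n}\left( F\right) =H_{\left( n^{-1}\right) ^{-1}}\left( F\right) \subset V$; this proves $\left\{ H_{n}\left( F\right) \right\} _{n\geq 1}$ is a neighbourhood base at $\omega $. The covering statement uses the same device pointwise: given $x\in X$, Remark~\ref{rem2.2} yields $\beta \in E$ with $H_{\varepsilon ^{-1}}\left( x\right) \in F$ for $\varepsilon \leq \beta $, and choosing an integer $n>\beta ^{-1}$ gives $H_{n}\left( x\right) \in F$, that is $x\in H_{n^{-1}}\left( F\right) $, whence $X=\bigcup_{n\geq 1}H_{n^{-1}}\left( F\right) $. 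The main obstacle throughout is just the careful bookkeeping with the order of $E$ under the inverse map $\varepsilon \mapsto \varepsilon ^{-1}$, together with this single use of the Archimedean property of $\mathbb{R}$; everything else is a routine combination of (ABS), Remark~\ref{rem2.2}, and local compactness.
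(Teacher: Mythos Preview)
Your proof is correct and follows essentially the same approach as the paper. The only cosmetic differences are that in (i) the paper first normalizes the index $\alpha$ to $e$ (by passing to $U_{1}=H_{\alpha^{-1}}(U)$) before taking the union $\bigcup_{\varepsilon\leq e}H_{\varepsilon^{-1}}(U_{1})$, whereas you take the union over $\varepsilon\leq\alpha$ directly; and in (ii), for the implication ``relatively compact $\Rightarrow$ bounded'', the paper covers $T$ by sets $H_{\alpha_{x}}(U)$ coming from the absorbency of a fixed balanced open $U$ and invokes Proposition~\ref{pr2.1}, while you cover $\overline{T}$ by the neighbourhoods $U_{x}$ supplied directly by (ABS) --- both routes are equally valid and equally short.
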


\begin{proof}
(i): Let $V$ be any arbitrary neighbourhood of $\omega $. According to the
absorptiveness, choose a\ neighbourhood $U_{0}$ of $\omega $ and some $%
\alpha \in E$ such that $H_{\varepsilon ^{-1}}\left( U_{0}\right) \subset V$
for $\varepsilon \leq \alpha $. Clearly the set $U_{1}=H_{\alpha
^{-1}}\left( U_{0}\right) $ is a neighbourhood of $\omega $ and further $%
H_{\varepsilon ^{-1}}\left( U_{1}\right) \subset V$ for $\varepsilon \leq e$%
. Hence the set $U={\LARGE \cup }_{\varepsilon \leq e}H_{\varepsilon
^{-1}}\left( U_{1}\right) $ is a balanced neighbourhood of $\omega $ and $%
U\subset V$. Seing that any neighbourhood of $\omega $ is absorbent (this is
immediate by (ABS)), (i) follows. (ii): Suppose $T$ is bounded. Then $%
T\subset H_{\varepsilon }\left( V\right) $ for some $\varepsilon \in E$,
where $V$ is a compact neighbourhood of $\omega $. This shows that $T$ is
relatively compact. Conversely\ suppose\ $T$ is relatively compact. We may
assume without loss of generality that $T$ is compact. On the other hand,
let $U$ be a balanced neighbourhood of $\omega $, which we may assume to be
open (see the\ proof of (i) above). Since $U$ is absorbent, to each $x\in T$
there is assigned some $\alpha _{x}\in E$ such that $x\in H_{\alpha
_{x}}\left( U\right) $. By compactness this yields a finite family $\left\{
\alpha _{i}\right\} _{1\leq i\leq n}$ in $E$ such that the sets $H_{\alpha
_{i}}\left( U\right) $ $\left( 1\leq i\leq n\right) $ form a covering of $T$%
. It follows that $T\subset H_{\alpha }\left( U\right) $ with $\alpha
=\min_{1\leq i\leq n}\alpha _{i}$ (use part (i) of Proposition \ref{pr2.1}).
Hence $U$ absorbs $T$ (use part (ii) of Proposition \ref{pr2.1}). Therefore,
(ii) follows by the fact that each neighbourhood of $\omega $ contains one
such $U$, as was established earlier.

\noindent (iii): If $K$ is a compact neighbourhood of $\omega $, then, as
shown in (i), there is some $F\subset K$ which is a balanced neighbourhood
of $\omega $. $F$ is clearly an elementary set. Now, let $V$ be any
arbitrarily given neighbourhood of $\omega $. In view of the boundedness of $%
F$ (see (ii)), there is some $\alpha \in E$ such that $H_{\varepsilon
^{-1}}\left( F\right) \subset V$ for $\varepsilon \leq \alpha $. Hence $%
H_{n}\left( F\right) \subset V$ for any positive integer $n\geq \alpha ^{-1}$%
. Finally, recalling that $F$ is absorbent (as a neighbourhood of $\omega $)
we see that if $x\in X$ is freely fixed, then $x\in H_{\varepsilon }\left(
F\right) $ for $\varepsilon \leq r$, where $r$ is a suitable element of $E$.
Hence $x\in H_{n^{-1}}\left( F\right) $ for any positive integer $n\geq
r^{-1}$. This completes the proof of (iii).
\end{proof}

This proposition has a few important corollaries. So, in what follows, the
notation and hypotheses are as in Proposition \ref{pr2.3}.

\begin{corollary}
\label{co2.1} $X$ is noncompact, nondiscrete, and $\sigma $-compact (i.e., $%
X $ is expressible as the union of a countable family of compact subspaces).
\end{corollary}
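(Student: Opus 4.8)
The plan is to extract all three properties from the structural picture of $X$ provided by Proposition \ref{pr2.3}, using mainly part (iii) (existence of an elementary set and the two associated families $H_{n}(F)$, $H_{n^{-1}}(F)$) and the characterisation of bounded sets in part (ii).

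For $\sigma$-compactness, I would fix an elementary set $F$ for $\mathcal{H}$, which exists by Proposition \ref{pr2.3}(iii). Since $F$ is relatively compact, $\overline{F}$ is compact, hence so is each $H_{n^{-1}}(\overline{F})$, being the image of a compact set under the continuous map $H_{n^{-1}}$. By Proposition \ref{pr2.3}(iii) the sets $H_{n^{-1}}(F)$, $n$ a positive integer, cover $X$, so $X=\bigcup_{n\geq 1}H_{n^{-1}}(F)\subset\bigcup_{n\geq 1}H_{n^{-1}}(\overline{F})\subset X$, which exhibits $X$ as a countable union of compact subspaces.

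For nondiscreteness, I argue by contradiction. If $X$ were discrete, then $\{\omega\}$ would be a neighbourhood of $\omega$, so Remark \ref{rem2.2} (equivalently, (ABS) applied to $V=\{\omega\}$) would yield, for each fixed $x\in X$, some $\alpha\in E$ with $H_{\varepsilon^{-1}}(x)=\omega$ for all $\varepsilon\leq\alpha$. Taking $\varepsilon=\alpha$ and applying the permutation $H_{\alpha}$ to both sides, together with the invariance of $\omega$ (Proposition \ref{pr2.2}(i)), gives $x=H_{\alpha}(H_{\alpha^{-1}}(x))=H_{\alpha}(\omega)=\omega$; since $x$ is arbitrary, $X$ reduces to the single point $\omega$, contradicting the standing hypothesis on $X$. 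For noncompactness, suppose again the contrary. A compact $X$ is relatively compact, hence bounded for $\mathcal{H}$ by Proposition \ref{pr2.3}(ii), so every neighbourhood of $\omega$ absorbs $X$. Picking $x_{0}\neq\omega$ in $X$ (possible since $X$ is not a single point) and using that $X$ is Hausdorff, hence $T_{1}$, the set $V=X\setminus\{x_{0}\}$ is an open neighbourhood of $\omega$; absorption gives $\alpha\in E$ with $H_{\varepsilon^{-1}}(X)\subset V$ for $\varepsilon\leq\alpha$, but each $H_{\varepsilon^{-1}}$ is a permutation of $X$, so $H_{\varepsilon^{-1}}(X)=X\subset V=X\setminus\{x_{0}\}$, a contradiction.

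I do not expect a genuine obstacle here; the only points that need a little care are the implicit use of the Hausdorff property (built into ``locally compact'' in this setting) to produce a proper neighbourhood of $\omega$ in the noncompactness part, and the systematic exploitation of the bijectivity of each $H_{\varepsilon}$ — it is precisely the impossibility of $X$ being ``absorbed into itself'' that forces $X$ to be noncompact.
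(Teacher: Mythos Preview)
Your proof is correct and follows essentially the same approach as the paper: Proposition \ref{pr2.3}(iii) plus closures for $\sigma$-compactness, absorbency of $\{\omega\}$ combined with invariance of $\omega$ for nondiscreteness, and boundedness of $X$ (Proposition \ref{pr2.3}(ii)) for noncompactness. The only cosmetic difference is in the noncompactness step: the paper intersects all closed neighbourhoods of $\omega$ to squeeze $X$ down to $\{\omega\}$, whereas you pick a single open neighbourhood $X\setminus\{x_{0}\}$ and invoke bijectivity of $H_{\varepsilon^{-1}}$ directly---your version is arguably more transparent, but the underlying idea is identical.
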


\begin{proof}
We begin by verifying that $X$ is noncompact. Let us assume the contrary.
Let $\mathcal{B}$ denote the set of all closed neighbourhoods of $\omega $.
In view of part (ii) of Proposition \ref{pr2.3}, $X$ is absorbed by each
member of $\mathcal{B}$. We deduce that $X\subset {\LARGE \cap }_{K\in 
\mathcal{B}}K=\left\{ \omega \right\} $ ($X$ being Hausdorff). This
contradicts the fact that $X$ is not reduced to one point, and so $X$ is
noncompact. We next show that $X$ is nondiscrete. This is straighforward.
Indeed, if we assume that $X$ is discrete, then the set $\left\{ \omega
\right\} $ is a neighbourhood of $\omega $ and therefore $\left\{ \omega
\right\} $ is absorbent. Combining this with part (i) of Proposition \ref%
{pr2.2}, it follows that $X$ is reduced to $\left\{ \omega \right\} $, a
contradiction. We finally check that $X$ is $\sigma $-compact. Let $F$ be an
elementary set. It is obvious that $\overline{F}$ (the closure of $F$) is an
elementary set, which moreover is compact. It follows that for each positive
integer $n$, the set $H_{n^{-1}}\left( \overline{F}\right) $ is compact. But
then, the corresponding family is a covering of $X$, according to part (iii)
of Proposition \ref{pr2.3}. This shows that $X$ is $\sigma $-compact.
\end{proof}

\begin{corollary}
\label{co2.2} For each $x\in X$, $x\neq \omega $, we have $\lim_{\varepsilon
\rightarrow \theta }H_{\varepsilon }\left( x\right) =\infty $, where $\infty 
$ denotes the point at infinity of the Alexandroff compactification of $X$.
\end{corollary}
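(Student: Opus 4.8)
The plan is to translate the statement "$H_\varepsilon(x)\to\infty$ in the Alexandroff compactification" into its defining property: for every compact set $K\subset X$ there should be some $\alpha\in E$ such that $H_\varepsilon(x)\notin K$ for all $\varepsilon\le\alpha$. So I would fix $x\neq\omega$ and an arbitrary compact set $K\subset X$, and argue by contradiction: suppose there is a net $\varepsilon_j\to\theta$ in $E$ with $H_{\varepsilon_j}(x)\in K$ for all $j$.

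The key idea is to exploit the center $\omega$ together with absorptiveness. By Remark 2.2, $H_{\varepsilon^{-1}}(y)\to\omega$ as $\varepsilon\to\theta$ for every $y\in X$; equivalently, writing things the other way, $H_\varepsilon(y)\to\omega$ would force $y=\omega$-type behaviour. More precisely, I would first choose, using Proposition 2.3(i), a balanced open neighbourhood $U$ of $\omega$ whose closure $\overline{U}$ is compact (shrink inside a compact neighbourhood of $\omega$, as in the proof of 2.3(iii)), and small enough that $x\notin\overline{U}$ — this is possible since $X$ is Hausdorff and $x\neq\omega$. Now apply absorptiveness (ABS) at the point $x$ with the neighbourhood $U$ of $\omega$: there are a neighbourhood $W$ of $x$ and some $\beta\in E$ with $H_{\varepsilon^{-1}}(W)\subset U$, hence in particular $H_{\varepsilon^{-1}}(x)\in U$, for all $\varepsilon\le\beta$. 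Rewriting with $\eta=\varepsilon^{-1}$, this says $H_\eta(x)\in U$ for all $\eta\ge\beta^{-1}$. That handles the "large $\eta$" direction but is the opposite of what I want; so instead I need to push the compact set $K$, not the singleton, toward $\omega$.

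Here is the step I expect to be the crux. Since $K$ is compact, Proposition 2.3(ii) tells us $K$ is bounded, so $K$ is absorbed by the neighbourhood $U$ of $\omega$: there is $\alpha\in E$ with $H_{\varepsilon^{-1}}(K)\subset U$ for all $\varepsilon\le\alpha$. Now suppose, for contradiction, that there is no $\alpha'$ with $H_\varepsilon(x)\notin K$ for $\varepsilon\le\alpha'$; then for every $\gamma\in E$ there exists $\varepsilon\le\gamma$ with $H_\varepsilon(x)\in K$. Pick such an $\varepsilon$ with moreover $\varepsilon\le\alpha$. Then $H_\varepsilon(x)\in K$ gives $H_{\varepsilon^{-1}}(H_\varepsilon(x))\in H_{\varepsilon^{-1}}(K)\subset U$, i.e. $x=H_{\varepsilon^{-1}\varepsilon}(x)=H_e(x)\in U$, contradicting $x\notin\overline{U}\supset U$. (The group law $H_{\varepsilon^{-1}}\circ H_\varepsilon=H_e=\mathrm{id}$ is what makes this work cleanly.) Hence such an $\alpha'$ exists, which is exactly the statement that $H_\varepsilon(x)\to\infty$ as $\varepsilon\to\theta$. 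The only mild subtlety is the passage to nets versus the "eventually" formulation when $E=\mathbb{Z}$ versus $E=\mathbb{R}$ or $\mathbb{R}_+^\ast$; since $\theta$ is the infimum of $E$ and $E$ is totally ordered, "$\varepsilon\to\theta$" means "for every $\gamma\in E$ eventually $\varepsilon\le\gamma$", so the $\varepsilon\le\alpha'$ conclusion is precisely convergence to the point at infinity, and no separate argument per $\mathbb{R}$-group is needed.
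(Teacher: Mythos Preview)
Your argument is correct and follows essentially the same route as the paper's: choose a neighbourhood of $\omega$ (your $U$, the paper's $V$) not containing $x$, use Proposition~\ref{pr2.3}(ii) to get $H_{\varepsilon^{-1}}(K)\subset U$ for $\varepsilon\le\alpha$, and then observe via the group law that $H_\varepsilon(x)\in K$ would force $x\in U$. The paper states this last step as the equivalent set inclusion $H_\varepsilon(X\setminus V)\subset X\setminus K$ rather than by contradiction, and your extra requirements that $U$ be balanced with compact closure (and the contradiction wrapper) are unnecessary but harmless.
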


\begin{proof}
Let $x\in X$ with $x\neq \omega $. Let $K$ be any arbitrary compact set in $%
X $. Let $V$ be a neighbourhood of $\omega $ not containing $x$. Thanks to
the boundedness of $K$ (part (ii) of Proposition \ref{pr2.3}), there is some 
$\alpha \in E$ such that $H_{\varepsilon }\left( X\backslash V\right)
\subset X\backslash K$ for $\varepsilon \leq \alpha $. Hence $H_{\varepsilon
}\left( x\right) \in X\backslash K$ for $\varepsilon \leq \alpha $. This
shows the corollary.
\end{proof}

\begin{corollary}
\label{co2.3} Let $F$ be an open elementary set. To each compact set $%
K\subset X$ there is assigned some integer $n\geq 1$ such that $K\subset
H_{n^{-1}}\left( F\right) $.
\end{corollary}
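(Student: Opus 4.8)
The plan is to deduce this from part (iii) of Proposition \ref{pr2.3} together with Corollary \ref{co2.1}. By Proposition \ref{pr2.3}(iii), the sets $H_{n^{-1}}\left( F\right) $, as $n$ ranges over the positive integers, form a covering of $X$; I first want to observe that this covering is by open sets, since $F$ is open and each $H_{n^{-1}}$ is a homeomorphism of $X$ onto itself (being a member of the action $\mathcal{H}$, hence a permutation of $X$, and continuous together with its inverse $H_{n}$). Thus $\left\{ H_{n^{-1}}\left( F\right) \right\} _{n\geq 1}$ is an open covering of $X$, a fortiori an open covering of any given compact set $K\subset X$.

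Next I would extract a finite subcovering: there is a finite set of positive integers $n_{1},\dots ,n_{k}$ such that $K\subset \bigcup _{j=1}^{k}H_{n_{j}^{-1}}\left( F\right) $. The key point now is that this finite union is already a single set of the required form. Indeed, by Proposition \ref{pr2.1}(i), $F$ being balanced means that $\varepsilon \leq \varepsilon ^{\prime }$ implies $H_{\varepsilon ^{\prime }}\left( F\right) \subset H_{\varepsilon }\left( F\right) $; applying this with $\varepsilon =n^{-1}$ and noting that for positive integers $m\leq n$ one has $n^{-1}\leq m^{-1}$ in $E$ (using that $E$ is totally ordered and contains the positive integers, so their inverses are ordered the expected way), we get $H_{n^{-1}}\left( F\right) \subset H_{m^{-1}}\left( F\right) $ whenever $m\leq n$. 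Hence the family $\left\{ H_{n^{-1}}\left( F\right) \right\} _{n\geq 1}$ is increasing in $n$, and therefore $\bigcup _{j=1}^{k}H_{n_{j}^{-1}}\left( F\right) =H_{n^{-1}}\left( F\right) $ with $n=\max _{1\leq j\leq k}n_{j}$. Consequently $K\subset H_{n^{-1}}\left( F\right) $ for this $n$, which is exactly the assertion.

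The only genuinely delicate point I anticipate is justifying that one may indeed pass to a finite subcovering, i.e. the tacit use of compactness of $K$ inside $X$; this is immediate from the definition of compactness once it is recorded that the covering consists of open sets, so there is really no obstacle. A secondary technical nuisance is the order relation on inverses of positive integers in $E$: since the group operation on $E$ is written multiplicatively but $E$ consists of real numbers, one must be a little careful that "$m\leq n$ implies $n^{-1}\leq m^{-1}$" refers to the order inherited from $\mathbb{R}$ restricted to $E$, which holds because $E$ is a totally ordered group under (RG)$_{1}$ and $m,n$ are genuine positive reals; but this too is routine. Thus the proof is short: open covering, finite subcovering, monotonicity of $n\mapsto H_{n^{-1}}(F)$, take the maximum.
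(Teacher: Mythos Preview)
Your approach is correct and is exactly the paper's: the family $\{H_{n^{-1}}(F)\}_{n\geq 1}$ is an open covering of $X$ by Proposition~\ref{pr2.3}(iii) and is increasing for inclusion, so any compact $K$ sits in a single member. One slip to fix: from $n^{-1}\leq m^{-1}$ and Proposition~\ref{pr2.1}(i) you obtain $H_{m^{-1}}(F)\subset H_{n^{-1}}(F)$ (not the reverse inclusion you wrote), which is precisely what ``increasing in $n$'' and your choice $n=\max_j n_j$ require.
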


\begin{proof}
The family $\left\{ H_{n^{-1}}\left( F\right) \right\} $, where $n$ ranges
over the positive integers, is on one hand a covering of $X$ (part (iii) of
Proposition \ref{pr2.3}), on the other hand an increasing sequence (for
inclusion). Hence the corollary follows.
\end{proof}

\begin{corollary}
\label{co2.4} There is a countable neighbourhood base at $\omega $
consisting of compact (resp. open) elementary sets.
\end{corollary}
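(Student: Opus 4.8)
The plan is to build one countable neighbourhood base at $\omega$ out of a single elementary set $F$ by pushing it forward under the maps $H_n$ ($n$ a positive integer), just as in part (iii) of Proposition \ref{pr2.3}, and then to check that each translate $H_n(F)$ inherits from $F$ the two relevant features, namely being open (resp. compact) and being elementary. So the whole argument reduces to two things: producing an open (resp. compact) elementary set, and verifying that $H_n$ carries such a set to another one.

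First I would record that open elementary sets and compact elementary sets both exist. For the compact case this is already in the proof of Corollary \ref{co2.1}: if $F$ is any elementary set (these exist by Proposition \ref{pr2.3}(iii)), then $\overline{F}$ is again a balanced relatively compact neighbourhood of $\omega$, i.e. elementary, and it is moreover compact. For the open case one reruns the construction in the proof of Proposition \ref{pr2.3}(i), starting this time from the interior of a compact neighbourhood of $\omega$ and choosing the auxiliary neighbourhood of $\omega$ to be open; since each $H_{\varepsilon^{-1}}$ is a homeomorphism of $X$, the balanced neighbourhood of $\omega$ thereby produced is open, and being contained in a compact set it is relatively compact, hence is an open elementary set.

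Next, fix an open (resp. compact) elementary set $F$. By Proposition \ref{pr2.3}(iii), the sets $H_n(F)$, with $n$ ranging over the positive integers, form a neighbourhood base at $\omega$, and this family is patently countable. It then remains to see that each $H_n(F)$ is itself an open (resp. compact) elementary set. Because the action is continuous, $H_n$ is a homeomorphism of $X$ onto itself whose inverse is $H_{n^{-1}}$, and $H_n(\omega)=\omega$ by Proposition \ref{pr2.2}(i); hence $H_n(F)$ is an open (resp. compact) neighbourhood of $\omega$, and $\overline{H_n(F)}=H_n\bigl(\overline{F}\bigr)$ is compact, so $H_n(F)$ is relatively compact. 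Finally $H_n(F)$ is balanced, since for $\varepsilon\leq e$, using commutativity of $E$ (so that $H_{\varepsilon^{-1}}\circ H_n=H_n\circ H_{\varepsilon^{-1}}$) together with the balancedness of $F$,
\[
H_{\varepsilon^{-1}}\bigl(H_n(F)\bigr)=H_n\bigl(H_{\varepsilon^{-1}}(F)\bigr)\subset H_n(F).
\]
Thus $\{H_n(F)\}_{n\geq 1}$ is a countable neighbourhood base at $\omega$ consisting of compact (resp. open) elementary sets, as required.

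I do not anticipate any real obstacle here: the statement is essentially a bookkeeping consequence of Proposition \ref{pr2.3}. The one point deserving a moment's attention is the stability of the class of elementary sets under the homeomorphisms $H_n$ — in particular the preservation of balancedness, which is exactly where commutativity of the $\mathbb{R}$-group $E$ and the invariance of $\omega$ (Proposition \ref{pr2.2}(i)) enter; openness is preserved because the $H_n$ are homeomorphisms, and compactness because continuous images of compact sets are compact.
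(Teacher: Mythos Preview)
Your proof is correct and follows essentially the same route as the paper: produce an open (resp.\ compact) elementary set, then invoke Proposition~\ref{pr2.3}(iii) to obtain the countable base $\{H_n(F)\}_{n\ge 1}$, checking that each $H_n(F)$ remains elementary. The only stylistic difference is that the paper gets both the open and the compact elementary set in one stroke from a single elementary set $F$ by taking $\mathring{F}$ and $\overline{F}$ respectively (balancedness is preserved under interior and closure because each $H_{\varepsilon^{-1}}$ is a homeomorphism), whereas you rerun the construction of Proposition~\ref{pr2.3}(i) for the open case; taking $\mathring{F}$ is shorter and avoids that detour.
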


\begin{proof}
Let $F$ be an elementary set. Observing that for each $\varepsilon \in E$
the compact set $H_{\varepsilon }\left( \overline{F}\right) $ and the open
set $H_{\varepsilon }(\mathring{F})$ ($\mathring{F}$ the interior of $F$)
are elementary sets, we are led to the desired result by [part (iii) of]
Proposition \ref{pr2.3}.
\end{proof}

By way of illustration, let us exhibit one typical absorptive continuous $%
\mathbb{R}$-group action.

\begin{example}
\label{ex2.4} \emph{On the }$N$\emph{-dimensional numerical space} $\mathbb{R%
}^{N}$ $(N\geq 1)$ \emph{we consider the action} $\mathcal{H=}\left(
H_{\varepsilon }\right) _{\varepsilon >0}$ of $E=\mathbb{R}_{+}^{\ast }$ 
\emph{given by} 
\begin{equation*}
H_{\varepsilon }\left( x\right) =\left( \frac{x_{1}}{\varepsilon ^{r_{1}}}%
,...,\frac{x_{N}}{\varepsilon ^{r_{N}}}\right) \text{, }x=\left(
x_{1},...,x_{N}\right) \in \mathbb{R}^{N}\text{, }\left( \varepsilon
>0\right) \text{,}
\end{equation*}%
\emph{where} $r_{i}\in \mathbb{N}$, $r_{i}>0$ $\left( i=1,...,N\right) $. 
\emph{The action} $\mathcal{H}$ \emph{is continuous and absorptive, with
center} \emph{the origin} $\omega =\left( 0,...,0\right) $ \emph{in} $%
\mathbb{R}^{N}$. \emph{The most typical case is when }$r_{i}=1$ \emph{(}$%
i=1,...,N$\emph{), i.e.,} $H_{\varepsilon }\left( x\right) =\frac{x}{%
\varepsilon }$, $x\in \mathbb{R}^{N}$.
\end{example}

Further examples will be presented in the next subsection. Now, we will end
the present subsection by turning our attention to proving that the product
of a finite family of absorptive continuous actions of the same $\mathbb{R}$%
-group is still continuous and absorptive. More precisely, let $\left\{
X_{i}\right\} _{1\leq i\leq n}$ be a finite family of locally compact
spaces. For each integer $i$ ($1\leq i\leq n$), let $\mathcal{H}_{i}=\left(
H_{\varepsilon }^{i}\right) _{\varepsilon \in E}$ be an action on $X_{i}$ of
a fixed $\mathbb{R}$-group $E$. Put $X=X_{1}\times {\small \cdot \cdot \cdot 
}\times X_{n}$ (a locally compact space with the product topology) and $%
H_{\varepsilon }=H_{\varepsilon }^{1}\times {\small \cdot \cdot \cdot }%
\times H_{\varepsilon }^{n}$ (direct product) for fixed $\varepsilon \in E$.
It is worth recalling that $H_{\varepsilon }$ is the mapping of $X$ into
itself defined by 
\begin{equation*}
H_{\varepsilon }\left( x\right) =\left( H_{\varepsilon }^{1}\left(
pr_{1}\left( x\right) \right) ,...,H_{\varepsilon }^{n}\left( pr_{n}\left(
x\right) \right) \right) \qquad \left( x\in X\right)
\end{equation*}%
where $pr_{i}$ denotes the natural projection of $X$ onto $X_{i}$. The
family $\mathcal{H=}\left( H_{\varepsilon }\right) _{\varepsilon \in E}$ is
an action of $E$ on $X$, and is referred to as the product of the actions $%
\mathcal{H}_{i}$ ($1\leq i\leq n$), sometimes denoted by $\mathcal{H=\sqcap }%
_{i=1}^{n}\mathcal{H}_{i}=\mathcal{H}_{1}\times {\small \cdot \cdot \cdot }%
\times \mathcal{H}_{n}$.

\begin{proposition}
\label{pr2.4} Let the notation and hypotheses be as above. Assume moreover
that each action $\mathcal{H}_{i}$ is continuous and absorptive, and of
center $\omega _{i}$. Then, the product action $\mathcal{H}$ is continuous
and absorptive, and of center $\omega =\left( \omega _{1},...,\omega
_{n}\right) $.
\end{proposition}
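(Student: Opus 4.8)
The plan is to verify the two claims — continuity and absorptiveness with the stated center — essentially one factor at a time, exploiting the product structure.

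First I would treat continuity. The map $(\varepsilon,x)\mapsto H_{\varepsilon}(x)$ of $E\times X$ into $X$ has components $(\varepsilon,x)\mapsto H_{\varepsilon}^{i}(pr_{i}(x))$, and each of these is the composition of the continuous projection $(\varepsilon,x)\mapsto(\varepsilon,pr_{i}(x))$ of $E\times X$ onto $E\times X_{i}$ with the map $(\varepsilon,y)\mapsto H_{\varepsilon}^{i}(y)$, which is continuous by hypothesis on $\mathcal{H}_{i}$. Since a map into the product space $X=X_{1}\times\cdots\times X_{n}$ is continuous if and only if each of its components is continuous, the map $(\varepsilon,x)\mapsto H_{\varepsilon}(x)$ is continuous, so $\mathcal{H}$ is a continuous action.

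Next I would verify (ABS) for the point $\omega=(\omega_{1},\dots,\omega_{n})$. Let $V$ be a neighbourhood of $\omega$ in $X$ and let $x=(x_{1},\dots,x_{n})\in X$. By definition of the product topology there are neighbourhoods $V_{i}$ of $\omega_{i}$ in $X_{i}$ with $V_{1}\times\cdots\times V_{n}\subset V$. Applying (ABS) for each $\mathcal{H}_{i}$ to the pair $(V_{i},x_{i})$, I obtain a neighbourhood $U_{i}$ of $x_{i}$ in $X_{i}$ and some $\alpha_{i}\in E$ such that $H_{\varepsilon^{-1}}^{i}(U_{i})\subset V_{i}$ whenever $\varepsilon\le\alpha_{i}$. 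Put $U=U_{1}\times\cdots\times U_{n}$, a neighbourhood of $x$ in $X$, and $\alpha=\min_{1\le i\le n}\alpha_{i}\in E$ (here I use condition (RG)$_{1}$, that $E$ is totally ordered, so the minimum of a finite subset exists and lies in $E$). For $\varepsilon\le\alpha$ we have $\varepsilon\le\alpha_{i}$ for every $i$, hence $H_{\varepsilon^{-1}}(U)=\prod_{i}H_{\varepsilon^{-1}}^{i}(U_{i})\subset\prod_{i}V_{i}\subset V$. Thus $\omega$ satisfies (ABS), so $\mathcal{H}$ is absorptive, and by part (ii) of Proposition \ref{pr2.2} the center is the unique such point, namely $\omega=(\omega_{1},\dots,\omega_{n})$.

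There is no serious obstacle here: the argument is a routine unfolding of the product topology together with the total-orderedness of $E$ (used to take a finite minimum of the $\alpha_{i}$'s). The only point that deserves a word of care is the identity $H_{\varepsilon^{-1}}(U_{1}\times\cdots\times U_{n})=H_{\varepsilon^{-1}}^{1}(U_{1})\times\cdots\times H_{\varepsilon^{-1}}^{n}(U_{n})$, which is immediate from the componentwise definition of $H_{\varepsilon^{-1}}=H_{\varepsilon^{-1}}^{1}\times\cdots\times H_{\varepsilon^{-1}}^{n}$ and the fact that $\varepsilon^{-1}$ in $E$ is the same element whichever factor it acts on; and the final appeal to uniqueness of the center to identify it as $(\omega_{1},\dots,\omega_{n})$ rather than merely exhibiting it.
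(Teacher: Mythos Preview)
Your proof is correct and follows essentially the same route as the paper's: continuity is obtained by factoring each component of $(\varepsilon,x)\mapsto H_{\varepsilon}(x)$ through the projection $E\times X\to E\times X_{i}$, and absorptiveness is verified at $\omega=(\omega_{1},\dots,\omega_{n})$ by choosing product neighbourhoods and taking $\alpha=\min_{i}\alpha_{i}$. The only cosmetic difference is that the paper phrases the factor condition as $U_{i}\subset H_{\varepsilon}^{i}(V_{i})$ rather than $H_{\varepsilon^{-1}}^{i}(U_{i})\subset V_{i}$, and it does not bother to invoke Proposition~\ref{pr2.2}(ii) explicitly for the uniqueness of the center.
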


\begin{proof}
We begin by showing the continuity. To this end we introduce the mappings $%
H:E\times X\rightarrow X$, $H^{i}:E\times X_{i}\rightarrow X_{i}$ ($1\leq
i\leq n$) and $f_{i}:E\times X\rightarrow E\times X_{i}$ ($1\leq i\leq n$)
defined respectively by%
\begin{equation*}
H\left( \varepsilon ,x\right) =H_{\varepsilon }\left( x\right) \text{, }%
H^{i}\left( \varepsilon ,t\right) =H_{\varepsilon }^{i}\left( t\right) \text{
and }f_{i}\left( \varepsilon ,x\right) =\left( \varepsilon ,pr_{i}\left(
x\right) \right)
\end{equation*}%
for $\varepsilon \in E$, $x\in X$ and $t\in X_{i}$. Clearly%
\begin{equation*}
H\left( \varepsilon ,x\right) =\left( H^{1}\left( f_{1}\left( \varepsilon
,x\right) \right) ,...,H^{n}\left( f_{n}\left( \varepsilon ,x\right) \right)
\right)
\end{equation*}%
for $\left( \varepsilon ,x\right) \in E\times X$, and the claimed continuity
follows at once.

We now check the absorptiveness. Let $V$ be a neighbourhood of $\omega
=\left( \omega _{1},...,\omega _{n}\right) $ in $X$. Let $V_{1}\times 
{\small \cdot \cdot \cdot }\times V_{n}\subset V$, where $V_{i}$ is some
neighbourhood of $\omega _{i}$ in $X_{i}$ ($1\leq i\leq n$). Finally, let $%
x\in X$. For fixed $i$, the absorptiveness of $\mathcal{H}_{i}$ yields some $%
\alpha _{i}\mathfrak{\in }E$ and some neighbourhood $U_{i}$ of $pr_{i}\left(
x\right) $ in $X_{i}$ such that $U_{i}\subset H_{\varepsilon }^{i}\left(
V_{i}\right) $ for $\varepsilon \leq \alpha _{i}$. Hence $U\subset
H_{\varepsilon }\left( V\right) $ for $\varepsilon \leq \alpha $, where $%
\alpha =\min_{1\leq i\leq n}\alpha _{i}$ and $U=U_{1}\times {\small \cdot
\cdot \cdot }\times U_{n}$ (a neighbourhood of $x$ in $X$). This shows the
absorptiveness of $\mathcal{H}$.
\end{proof}

\subsection{\textbf{Contraction flows}}

The study of contraction flows provides further examples of absorptive
continuous $\mathbb{R}$-group actions.

\begin{definition}
\label{def2.5} \emph{By a contraction flow we will mean a pair} $\left( X,%
\mathcal{H}\right) $ \emph{in which:}

\noindent \emph{(CF)}$_{1}$\emph{\ }$X$\emph{\ is a complete locally compact
metric space not reduced\ to one\ point;}

\noindent \emph{(CF)}$_{2}$ $\mathcal{H=}\left( H_{\varepsilon }\right)
_{\varepsilon \in E}$ \emph{is an action of an} $\mathbb{R}$-\emph{group, }$%
E $\emph{, on }$X$\emph{\ with the following properties:}

\emph{(i) For each fixed }$x\in X$\emph{, the map }$\varepsilon \rightarrow
H_{\varepsilon }\left( x\right) $\emph{\ sends continuously }$E$\emph{\ into 
}$X$\emph{.}

\emph{(ii) For each }$\varepsilon \in E$\emph{, we have}%
\begin{equation*}
\sup_{x,y\in X,\text{ }x\neq y}\frac{d\left( H_{\varepsilon }\left( x\right)
,H_{\varepsilon }\left( y\right) \right) }{d\left( x,y\right) }<+\infty 
\text{,}
\end{equation*}%
\emph{where }$d$\emph{\ is the metric on }$X$\emph{.}

\emph{(iii) We have}%
\begin{equation*}
\lim_{\varepsilon \rightarrow +\infty }\sup_{x,y\in X,\text{ }x\neq y}\frac{%
d\left( H_{\varepsilon }\left( x\right) ,H_{\varepsilon }\left( y\right)
\right) }{d\left( x,y\right) }=0\text{.}
\end{equation*}%
\bigskip
\end{definition}

In what follows, $\left( X,\mathcal{H}\right) $ denotes a contraction flow,
and the basic notation is as in Definition \ref{def2.5}. For each $%
\varepsilon \in E$, let 
\begin{equation*}
l\left( \varepsilon \right) =\sup_{x,y\in X,\text{ }x\neq y}\frac{d\left(
H_{\varepsilon }\left( x\right) ,H_{\varepsilon }\left( y\right) \right) }{%
d\left( x,y\right) }\text{,}
\end{equation*}%
which defines a nonnegative real function on $E$ denoted $l$.

\begin{remark}
\emph{\label{rem2.3} For fixed }$\varepsilon \in E$\emph{, we have }$d\left(
H_{\varepsilon }\left( x\right) ,H_{\varepsilon }\left( y\right) \right)
\leq l\left( \varepsilon \right) d\left( x,y\right) $\emph{\ for all }$%
x,y\in X$\emph{. Moreover, }$l\left( \varepsilon \right) $\emph{\ is the
smallest constant for which this inequality holds true.}
\end{remark}

A few useful properties of the function $l$ are collected below.

\begin{lemma}
\label{lem2.1} The following assertions hold true.

(i) The function $l$ actually maps $E$ into $\mathbb{R}_{+}^{\ast }$.

(ii) We have:%
\begin{equation}
l\left( \varepsilon \varepsilon ^{\prime }\right) \leq l\left( \varepsilon
\right) l\left( \varepsilon ^{\prime }\right) \text{ for all }\varepsilon 
\text{, }\varepsilon ^{\prime }\in E\text{,}  \label{eq2.1}
\end{equation}%
\begin{equation}
l\left( e\right) =1\text{,\quad }  \label{eq2.2}
\end{equation}%
\begin{equation}
\lim_{\varepsilon \rightarrow \theta }l\left( \varepsilon ^{-1}\right) =0%
\text{, }  \label{eq2.3}
\end{equation}%
\begin{equation}
\sup_{\varepsilon \leq \alpha }l\left( \varepsilon ^{-1}\right) <+\infty 
\text{\qquad for any }\alpha \in E\text{.}  \label{eq2.4}
\end{equation}
\end{lemma}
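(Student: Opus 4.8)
The plan is to prove the four assertions in order, using the submultiplicativity of $l$ as the central tool, together with properties \emph{(ii)} and \emph{(iii)} of \emph{(CF)}$_2$ and the structure of an $\mathbb{R}$-group.

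First I would establish \eqref{eq2.1} and \eqref{eq2.2}. For \eqref{eq2.2}, since $H_e$ is the identity map on $X$, the ratio $d(H_e(x),H_e(y))/d(x,y)$ equals $1$ for all $x\neq y$, so $l(e)=1$. For \eqref{eq2.1}, fix $\varepsilon,\varepsilon'\in E$ and $x,y\in X$ with $x\neq y$; using $H_{\varepsilon\varepsilon'}=H_\varepsilon\circ H_{\varepsilon'}$ and Remark \ref{rem2.3} twice, I get $d(H_{\varepsilon\varepsilon'}(x),H_{\varepsilon\varepsilon'}(y))=d(H_\varepsilon(H_{\varepsilon'}(x)),H_\varepsilon(H_{\varepsilon'}(y)))\le l(\varepsilon)d(H_{\varepsilon'}(x),H_{\varepsilon'}(y))\le l(\varepsilon)l(\varepsilon')d(x,y)$; taking the supremum over $x\neq y$ gives the claim. (One should note $l(\varepsilon)<+\infty$ for every $\varepsilon$ by \emph{(CF)}$_2$(ii), so all products here make sense — this is part of what (i) will upgrade.)

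Next I would prove (i), namely $l(\varepsilon)>0$ for all $\varepsilon\in E$. Suppose $l(\varepsilon_0)=0$ for some $\varepsilon_0$. Then $d(H_{\varepsilon_0}(x),H_{\varepsilon_0}(y))=0$, i.e.\ $H_{\varepsilon_0}(x)=H_{\varepsilon_0}(y)$, for all $x,y\in X$; but $H_{\varepsilon_0}$ is a permutation of $X$, hence injective, forcing $X$ to be a single point, contradicting \emph{(CF)}$_1$. So $l$ maps $E$ into $\mathbb{R}_+^\ast$. Combined with \eqref{eq2.1} and \eqref{eq2.2}, this already gives $1=l(e)=l(\varepsilon\varepsilon^{-1})\le l(\varepsilon)l(\varepsilon^{-1})$, so $l(\varepsilon^{-1})\ge 1/l(\varepsilon)>0$ — a bound in the wrong direction, but it confirms finiteness questions are not an issue.

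The main work is \eqref{eq2.3} and \eqref{eq2.4}, and this is where I expect the only real obstacle: translating the limit hypothesis \emph{(CF)}$_2$(iii), which is phrased as $\varepsilon\to+\infty$, into statements about $\varepsilon^{-1}$ as $\varepsilon\to\theta$. The point is that $\varepsilon\mapsto\varepsilon^{-1}$ reverses the order on $E$ (since $E\subset\mathbb{R}_+^\ast$ as a set once we know... actually $E$ is a group of reals containing the positive integers, so its elements relevant to $\varepsilon\to\theta$ are positive and inversion is order-reversing), so $\varepsilon\to\theta$ (the greatest lower bound of $E$) corresponds to $\varepsilon^{-1}\to+\infty$ within $E$. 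Concretely, \emph{(CF)}$_2$(iii) says $l(\eta)\to 0$ as $\eta\to+\infty$ in $E$; substituting $\eta=\varepsilon^{-1}$ gives exactly $\lim_{\varepsilon\to\theta}l(\varepsilon^{-1})=0$, which is \eqref{eq2.3}. For \eqref{eq2.4}, fix $\alpha\in E$ and consider $\sup_{\varepsilon\le\alpha}l(\varepsilon^{-1})$. By \eqref{eq2.3}, $l(\varepsilon^{-1})$ is bounded — say by $1$ — for all $\varepsilon\le\beta$, for some $\beta\le\alpha$ (and we may take $\beta\le e$). For $\beta\le\varepsilon\le\alpha$ we need a bound; here I would use submultiplicativity: for any such $\varepsilon$, write $\varepsilon^{-1}$ and compare with $\beta^{-1}$. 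Since $\varepsilon\ge\beta$ we have $\beta^{-1}=\varepsilon^{-1}(\varepsilon\beta^{-1})$ with $\varepsilon\beta^{-1}\ge e$, but this again controls $l(\varepsilon^{-1})$ only from below. The clean route instead: for $\beta\le\varepsilon\le\alpha$, set $\eta=\alpha\varepsilon^{-1}\ge e$, so $\varepsilon^{-1}=\alpha^{-1}\eta$ and $l(\varepsilon^{-1})\le l(\alpha^{-1})l(\eta)$; I then need $l(\eta)$ bounded for $e\le\eta\le\alpha\beta^{-1}$, i.e.\ $l$ bounded on a fixed interval $[e,\gamma]\cap E$ with $\gamma=\alpha\beta^{-1}$. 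That boundedness follows because $l$ on $[e,\gamma]$ is squeezed: $l(e)=1$ and $l(\gamma)<\infty$, but $l$ need not be monotone, so I would argue via submultiplicativity once more — any $\eta\in[e,\gamma]$ satisfies $l(\gamma)\le l(\eta)l(\gamma\eta^{-1})$, unhelpful; so the honest fix is to observe that \emph{(CF)}$_2$(iii) in fact gives boundedness of $l$ on $E_\alpha=\{\varepsilon:\varepsilon\ge\alpha\}$ for every $\alpha$ (a convergent net is eventually bounded, and the finitely-or-compactly-many remaining values are bounded by local compactness plus continuity considerations, or simply by noting $l$ is finite-valued and the complement is order-bounded), hence $l(\eta)\le M$ for $\eta\ge e$ with $M$ depending only on nothing extra, giving $\sup_{\varepsilon\le\alpha}l(\varepsilon^{-1})\le l(\alpha^{-1})M<+\infty$. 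I would write this last step carefully, as it is the subtle point; everything else is routine.
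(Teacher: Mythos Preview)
Your treatment of (i), \eqref{eq2.1}, \eqref{eq2.2}, and \eqref{eq2.3} is correct and matches the paper's argument. The remark about the order-reversal $\varepsilon\to\theta \Leftrightarrow \varepsilon^{-1}\to+\infty$ is also right (it follows from (RG)$_1$: in a totally ordered abelian group inversion reverses order, and $\sup E=+\infty$ since $E$ contains all positive integers).

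The gap is in \eqref{eq2.4}. Your proposed resolution---that $l$ is bounded on $[e,\gamma]\cap E$ by ``local compactness plus continuity considerations'' or because ``the complement is order-bounded''---does not work: $l$ is not known to be continuous, and for $E=\mathbb{R}$ or $E=\mathbb{R}_+^\ast$ the set $[e,\gamma]\cap E$ is uncountable, so finite-valuedness alone gives nothing. You correctly flag this as the subtle point, but the patch you sketch is not a proof.

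The paper's fix is a one-line application of the very submultiplicativity you were trying to exploit, but applied \emph{uniformly} rather than after a case split. Having chosen $\varepsilon_0\in E$ with $l(s^{-1})\le 1$ for all $s\le\varepsilon_0$ (this is your $\beta$), one does not restrict to $\varepsilon\in[\beta,\alpha]$; instead, for \emph{every} $\varepsilon\le\alpha$ one sets $t=\alpha^{-1}\varepsilon_0\,\varepsilon$. Then $t\le\varepsilon_0$ (since $\alpha^{-1}\varepsilon\le e$), so $l(t^{-1})\le 1$, and
\[
l(\varepsilon^{-1})=l\bigl(\varepsilon_0\alpha^{-1}\cdot t^{-1}\bigr)\le l(\varepsilon_0\alpha^{-1})\,l(t^{-1})\le l(\varepsilon_0\alpha^{-1}),
\]
a bound depending only on $\alpha$. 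This is exactly the factorisation $\varepsilon^{-1}=\alpha^{-1}\eta$ you wrote down, but with the crucial twist that the ``variable'' factor is arranged to land in the region where you already control $l$, rather than in an interval where you would need a separate argument.
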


\begin{proof}
In fact, (i) amounts to saying that $l\left( \varepsilon \right) >0$ for all 
$\varepsilon \in E$. But this follows immediately by the fact that each $%
H_{\varepsilon }$ is one to one. Let us turn now to the verification of
(ii): (\ref{eq2.1})-(\ref{eq2.2}) follow immediately by Remark \ref{rem2.3},
and (\ref{eq2.3}) is a direct consequence of property (iii) in Definition %
\ref{def2.5}. It remains to verify (\ref{eq2.4}). Based on (\ref{eq2.3}),
let us fix some $\varepsilon _{0}\in E$ such that $l\left( s^{-1}\right)
\leq 1$ for $s\leq \varepsilon _{0}$. With this in mind, let now $\alpha \in
E$. Consider any arbitrary $\varepsilon \in E$ with $\varepsilon \leq \alpha 
$. Let $t=\alpha ^{-1}\varepsilon _{0}\varepsilon $. Noting that $l\left(
t^{-1}\right) \leq 1$, and recalling (\ref{eq2.1}), we see at once that $%
l\left( \varepsilon ^{-1}\right) =l\left( \varepsilon _{0}\alpha
^{-1}t^{-1}\right) \leq l\left( \varepsilon _{0}\alpha ^{-1}\right) l\left(
t^{-1}\right) \leq l\left( \varepsilon _{0}\alpha ^{-1}\right) $. Hence (\ref%
{eq2.4}) follows.
\end{proof}

We are now in a position to prove the following result.

\begin{proposition}
\label{pr2.5} The action $\mathcal{H}$ is continuous and absorptive.
\end{proposition}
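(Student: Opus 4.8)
The plan is to verify the two assertions of Definition~\ref{def2.3}-style properties separately: first continuity of $\mathcal{H}$ as a map $E\times X\to X$, then the absorptiveness condition (ABS) with a suitably chosen center $\omega$.

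\textbf{Continuity.} The map $\varepsilon\mapsto H_\varepsilon(x)$ is continuous for each fixed $x$ by (CF)$_2$(i), and Remark~\ref{rem2.3} together with Lemma~\ref{lem2.1}(i) shows that each $H_\varepsilon$ is Lipschitz (hence continuous) in $x$, uniformly on compacts once $\varepsilon$ is restricted. The natural approach is a standard ``separate continuity plus local Lipschitz estimate'' argument: fix $(\varepsilon_0,x_0)\in E\times X$ and estimate
\begin{equation*}
d\bigl(H_\varepsilon(x),H_{\varepsilon_0}(x_0)\bigr)\le d\bigl(H_\varepsilon(x),H_\varepsilon(x_0)\bigr)+d\bigl(H_\varepsilon(x_0),H_{\varepsilon_0}(x_0)\bigr)\le l(\varepsilon)\,d(x,x_0)+d\bigl(H_\varepsilon(x_0),H_{\varepsilon_0}(x_0)\bigr).
\end{equation*}
The second term is small for $\varepsilon$ near $\varepsilon_0$ by (CF)$_2$(i); the first term is small provided $l(\varepsilon)$ stays bounded near $\varepsilon_0$, which holds since $l$ is submultiplicative by \eqref{eq2.1}, $l(e)=1$ by \eqref{eq2.2}, and \eqref{eq2.4} bounds $l(\varepsilon^{-1})$ on sets $\{\varepsilon\le\alpha\}$; combining these gives a local bound on $l$ near any point of $E$. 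This yields joint continuity at $(\varepsilon_0,x_0)$.

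\textbf{Absorptiveness.} The candidate center is the common fixed point $\omega$ of all the $H_\varepsilon$. I would first produce $\omega$: by \eqref{eq2.3} pick $\varepsilon_1\in E$ with $l(\varepsilon_1^{-1})<1$; then $H_{\varepsilon_1^{-1}}$ is a contraction of the complete metric space $X$ (completeness is (CF)$_1$), so by the Banach fixed point theorem it has a unique fixed point $\omega$. Since $H_{\varepsilon_1^{-1}}$ commutes with every $H_\varepsilon$ (the action axioms), $H_\varepsilon(\omega)$ is again a fixed point of $H_{\varepsilon_1^{-1}}$, hence equals $\omega$; so $\omega$ is fixed by all of $\mathcal{H}$. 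Now for (ABS): given a neighbourhood $V$ of $\omega$, choose $\rho>0$ with the closed ball $\overline{B}(\omega,\rho)\subset V$; given $x\in X$, set $\delta=1$ (say) and $U=B(x,\delta)$. For $y\in U$ and any $\varepsilon\in E$,
\begin{equation*}
d\bigl(H_{\varepsilon^{-1}}(y),\omega\bigr)=d\bigl(H_{\varepsilon^{-1}}(y),H_{\varepsilon^{-1}}(\omega)\bigr)\le l(\varepsilon^{-1})\,d(y,\omega)\le l(\varepsilon^{-1})\bigl(d(x,\omega)+\delta\bigr),
\end{equation*}
and by \eqref{eq2.3} the factor $l(\varepsilon^{-1})\to 0$ as $\varepsilon\to\theta$, so there is $\alpha\in E$ with $l(\varepsilon^{-1})\bigl(d(x,\omega)+\delta\bigr)\le\rho$ for all $\varepsilon\le\alpha$; thus $H_{\varepsilon^{-1}}(U)\subset V$ for $\varepsilon\le\alpha$, which is exactly (ABS).

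\textbf{Main obstacle.} The only genuinely delicate point is the joint continuity: one must be careful that the Lipschitz constant $l(\varepsilon)$ does not blow up as $\varepsilon$ varies in a neighbourhood of $\varepsilon_0$, and this is precisely what \eqref{eq2.1}, \eqref{eq2.2} and \eqref{eq2.4} are designed to guarantee (writing $\varepsilon=\varepsilon_0\cdot(\varepsilon_0^{-1}\varepsilon)$ and bounding $l(\varepsilon)\le l(\varepsilon_0)\,l(\varepsilon_0^{-1}\varepsilon)$, with the last factor controlled for $\varepsilon_0^{-1}\varepsilon$ in a bounded piece of $E$). Everything else — the fixed point, its $\mathcal{H}$-invariance, and the ball estimate for (ABS) — is routine given Lemma~\ref{lem2.1}.
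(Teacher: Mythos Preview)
Your proof is correct and follows essentially the same route as the paper: continuity via the triangle-inequality splitting together with a local bound on $l(\varepsilon)$ drawn from Lemma~\ref{lem2.1}, then the Banach fixed point theorem to produce $\omega$, and finally the estimate $d(H_{\varepsilon^{-1}}(y),\omega)\le l(\varepsilon^{-1})\,d(y,\omega)$ combined with \eqref{eq2.3} to verify (ABS). Your version is in fact slightly more streamlined than the paper's in two places---you obtain the $\mathcal{H}$-invariance of $\omega$ directly from commutativity (the paper instead shows that the fixed points $a(\varepsilon)$ of the various contractions $H_{\varepsilon^{-1}}$ all coincide), and your ball estimate for (ABS) avoids the paper's case distinction $x=\omega$ versus $x\neq\omega$---but these are cosmetic differences, not a different strategy.
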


\begin{proof}
We begin by proving the continuity. Let $\left( \varepsilon
_{0},x_{0}\right) $ be freely fixed in $E\times X$. Choose $\alpha \in E$
with $\varepsilon _{0}<\alpha $, and put $J=(\theta ,\alpha ]\cap E$. The
set $J$ is a neighbourhood of $\varepsilon _{0}$ in $E$ and, thanks to (\ref%
{eq2.4}), we have $d\left( H_{\varepsilon }\left( x\right) ,H_{\varepsilon
}\left( y\right) \right) \leq c_{0}d\left( x,y\right) $ for $\varepsilon \in
J$ and $x$, $y\in X$, where the constant $c_{0}$ depends only on $%
\varepsilon _{0}$ and $\alpha $. Hence, given $\left( \varepsilon ,x\right)
\in J\times X$, by the inequality%
\begin{equation*}
d\left( H_{\varepsilon }\left( x\right) ,H_{\varepsilon _{0}}\left(
x_{0}\right) \right) \leq d\left( H_{\varepsilon }\left( x\right)
,H_{\varepsilon }\left( x_{0}\right) \right) +d\left( H_{\varepsilon }\left(
x_{0}\right) ,H_{\varepsilon _{0}}\left( x_{0}\right) \right)
\end{equation*}%
we have%
\begin{equation*}
d\left( H_{\varepsilon }\left( x\right) ,H_{\varepsilon _{0}}\left(
x_{0}\right) \right) \leq c_{0}d\left( x,x_{0}\right) +d\left(
H_{\varepsilon }\left( x_{0}\right) ,H_{\varepsilon _{0}}\left( x_{0}\right)
\right) \text{.}
\end{equation*}%
From which we deduce, using part (i) of Definition \ref{def2.5}, that%
\begin{equation*}
d\left( H_{\varepsilon }\left( x\right) ,H_{\varepsilon _{0}}\left(
x_{0}\right) \right) \rightarrow 0\text{ as }\left( \varepsilon ,x\right)
\rightarrow \left( \varepsilon _{0},x_{0}\right) \text{ in }E\times X\text{.}
\end{equation*}%
This shows the continuity of the action $\mathcal{H}$. We next show the
absorptiveness. We will first establish the existence of an invariant point $%
\omega \in X$ for $\mathcal{H}$. For this purpose, let $0<k<1$. According to
(\ref{eq2.3}), there is some $\varepsilon _{0}\in E$ such that%
\begin{equation*}
d\left( H_{\varepsilon ^{-1}}\left( x\right) ,H_{\varepsilon ^{-1}}\left(
y\right) \right) \leq kd\left( x,y\right)
\end{equation*}%
for $\varepsilon \in I=(\theta ,\varepsilon _{0}]\cap E$ and $x,y\in X$.
Therefore, by the classical fixed-point theorem, there exists a map $%
\varepsilon \rightarrow a\left( \varepsilon \right) $ of $I$ into $X$ such
that $H_{\varepsilon ^{-1}}\left( a\left( \varepsilon \right) \right)
=a\left( \varepsilon \right) $ for $\varepsilon \in I$. Furthermore, the
same theorem says that for fixed $\varepsilon \in I$, the point $a\left(
\varepsilon \right) $ is the unique solution of the equation $H_{\varepsilon
^{-1}}\left( x\right) =x$\quad $\left( x\in X\right) $. This being so, fix
freely $\alpha $, $\beta \in I$. Clearly%
\begin{equation*}
H_{\alpha ^{-1}}\left( H_{\beta ^{-1}}\left( a\left( \alpha \right) \right)
\right) =H_{\beta ^{-1}}\left( a\left( \alpha \right) \right) \text{.}
\end{equation*}%
By unicity, as pointed out above, this implies $H_{\beta ^{-1}}\left(
a\left( \alpha \right) \right) =a\left( \alpha \right) $. But, $H_{\beta
^{-1}}\left( a\left( \beta \right) \right) =a\left( \beta \right) $. Hence $%
a\left( \alpha \right) =a\left( \beta \right) $, and that for all $\alpha $, 
$\beta \in I$. This means that the map $\varepsilon \rightarrow a\left(
\varepsilon \right) $ is constant. In other words, there is a unique point $%
\omega \in X$ such that $H_{\varepsilon ^{-1}}\left( \omega \right) =\omega $
for all $\varepsilon \in I$. It trivially follows that $H_{\varepsilon
}\left( \omega \right) =\omega $ for all $\varepsilon \in I$. But this
extends to the whole $E$ because $\varepsilon \in E\backslash I$ implies $%
\varepsilon ^{-1}\varepsilon _{0}^{2}\in I$ and therefore $H_{\varepsilon
}\left( \omega \right) =\omega $. This shows the existence of an invariant
point $\omega \in X$ and it is clear that the latter is unique.

Now, let $V$ be a neighbourhood of $\omega $. We may assume without loss of
generality that $V$ is the open ball with center $\omega $ and radius some $%
r>0$, i.e.,%
\begin{equation*}
V=B\left( \omega ,r\right) \equiv \left\{ y\in X:d\left( \omega ,y\right)
<r\right\} \text{.}
\end{equation*}%
Let $x\in X$. The aim is to find some neighbourhood $U$ of $x$ and some $%
\alpha \in E$ such that $H_{\varepsilon ^{-1}}\left( U\right) \subset V$ for 
$\varepsilon \leq \alpha $. Let us first assume that $x=\omega $. By the
obvious relation $H_{\varepsilon ^{-1}}\left( B\left( \omega ,r\right)
\right) \subset B\left( \omega ,rl\left( \varepsilon ^{-1}\right) \right) $
and use of (\ref{eq2.3}), one quickly arrives at $H_{\varepsilon
^{-1}}\left( V\right) \subset V$ for $\varepsilon \leq \alpha $, where $%
\alpha \in E$ is chosen in such a way that $l\left( \varepsilon ^{-1}\right)
<1$ for $\varepsilon \leq \alpha $. We assume now that $x\neq \omega $.
Using (\ref{eq2.3}), once again, choose some $\alpha \in E$ such that $%
l\left( \varepsilon ^{-1}\right) \leq \frac{r}{2d\left( \omega ,x\right) }$
for $\varepsilon \leq \alpha $, and bear then in mind that $d\left( \omega
,x\right) <\frac{r}{c}$, where $c=\sup_{\varepsilon \leq \alpha }l\left(
\varepsilon ^{-1}\right) $ (use (\ref{eq2.4})). Finally, choose some
constant $r_{1}>0$ such that $r_{1}+d\left( \omega ,x\right) <\frac{r}{c}$.
This being so, fix freely $\varepsilon \leq \alpha $. If $y\in B\left(
x,r_{1}\right) =\left\{ z\in X:d\left( x,z\right) <r_{1}\right\} $, then, by
using%
\begin{equation*}
d\left( \omega ,H_{\varepsilon ^{-1}}\left( y\right) \right) \leq d\left(
\omega ,H_{\varepsilon ^{-1}}\left( x\right) \right) +d\left( H_{\varepsilon
^{-1}}\left( x\right) ,H_{\varepsilon ^{-1}}\left( y\right) \right)
\end{equation*}%
we get 
\begin{equation*}
d\left( \omega ,H_{\varepsilon ^{-1}}\left( y\right) \right) \leq l\left(
\varepsilon ^{-1}\right) \left[ d\left( \omega ,x\right) +d\left( x,y\right) %
\right]
\end{equation*}%
\begin{equation*}
\qquad \quad \quad <l\left( \varepsilon ^{-1}\right) \left[ d\left( \omega
,x\right) +r_{1}\right]
\end{equation*}%
\begin{equation*}
<r\text{.}\qquad \qquad
\end{equation*}%
Hence $H_{\varepsilon ^{-1}}\left( B\left( x,r_{1}\right) \right) \subset V$
for $\varepsilon \leq \alpha $. This shows the absorptiveness.
\end{proof}

Thus, the notion of a contraction flow provides an example of an absorptive
continuous $\mathbb{R}$-group action. Let us illustrate this still more.

\begin{example}
\emph{\label{ex2.5} Let} $\mathcal{H=}\left( H_{\varepsilon }\right)
_{\varepsilon \in E}$ \emph{be an action of }$E$ \emph{on} $\mathbb{R}^{N}$, 
$E$ \emph{being a given} $\mathbb{R}$\emph{-group and} $\mathbb{R}^{N}$ 
\emph{(with} $N\geq 1$) \emph{being equipped with the Euclidean metric. We
suppose that:\medskip }

\emph{(1) Each} $H_{\varepsilon }$ \emph{is an automorphism of} $\mathbb{R}%
^{N}$ \emph{(viewed as an }$N$\emph{-dimensional vector space);\medskip }

\emph{(2) for each }$x\in \mathbb{R}^{N}$, \emph{the map }$\varepsilon
\rightarrow H_{\varepsilon }\left( x\right) $\emph{\ sends \ continuously} $%
E $ \emph{into} $\mathbb{R}^{N}$;\medskip

\emph{(3) }$\lim_{\varepsilon \rightarrow +\infty }\left\vert H_{\varepsilon
}\left( x\right) \right\vert =0$\emph{\ for each }$x\in \mathbb{R}^{N}$, 
\emph{where }$\left\vert {\small \cdot }\right\vert $\emph{\ denotes the
Euclidean} \emph{norm in} $\mathbb{R}^{N}$.

\emph{We want to show that this action is continuous and absorptive. Thanks\
to Proposition \ref{pr2.5}, it suffices to verify that }$\left( \mathbb{R}%
^{N},\mathcal{H}\right) $ \emph{is a contraction flow. Clearly we only need
to show that conditions (ii)-(iii) of (CF)}$_{2}$\emph{\ (Definition \ref%
{def2.5}) are fulfilled} \emph{when }$\mathcal{H}$ \emph{is the action under
consideration here and }$X$\emph{\ is} $\mathbb{R}^{N}$. \emph{First of all,}
$\mathbb{R}^{N}$ \emph{being provided with the canonical basis, the
automorphism }$H_{\varepsilon }$\emph{\ is represented by an }$N\times N$%
\emph{\ real matrix} $B\left( \varepsilon \right) =\left( b_{ij}\left(
\varepsilon \right) \right) _{1\leq i,j\leq N}$\emph{, so that}%
\begin{equation*}
H_{\varepsilon }^{i}\left( x\right) =\sum_{j=1}^{N}b_{ij}\left( \varepsilon
\right) x_{j}\text{ \emph{for }}x=\left( x_{1},...,x_{N}\right) \in \mathbb{R%
}^{N}\qquad \left( 1\leq i\leq N\right) \text{,}
\end{equation*}%
\emph{where }$H_{\varepsilon }^{i}$\emph{\ is the }$i$\emph{-th component of 
}$H_{\varepsilon }=\left( H_{\varepsilon }^{1},...,H_{\varepsilon
}^{N}\right) $\emph{. One deduces immediately that}%
\begin{equation}
\lim_{\varepsilon \rightarrow +\infty }b_{ij}\left( \varepsilon \right)
=0\qquad \left( 1\leq i,j\leq N\right) \text{.}  \label{eq2.5}
\end{equation}%
\emph{Actually, (\ref{eq2.5}) is equivalent to (3). On the other hand,} 
\begin{equation*}
H_{\varepsilon }^{i}\left( x\right) -H_{\varepsilon }^{i}\left( y\right)
=\sum_{j=1}^{N}b_{ij}\left( \varepsilon \right) \left( x_{j}-y_{j}\right) 
\text{\quad \emph{for} }x\text{, }y\in \mathbb{R}^{N}\text{, }\varepsilon
\in E\text{.}
\end{equation*}%
\emph{Hence}%
\begin{equation*}
\left\vert H_{\varepsilon }\left( x\right) -H_{\varepsilon }\left( y\right)
\right\vert \leq \left\Vert B\left( \varepsilon \right) \right\Vert
\left\vert x-y\right\vert \quad \text{\emph{for }}x\text{, }y\in \mathbb{R}%
^{N}\text{, }\varepsilon \in E\text{,}
\end{equation*}%
\emph{where}%
\begin{equation*}
\left\Vert B\left( \varepsilon \right) \right\Vert =\left( \sum_{i,j=1}^{N} 
\left[ b_{ij}\left( \varepsilon \right) \right] ^{2}\right) ^{\frac{1}{2}}%
\text{.}
\end{equation*}%
\emph{Therefore the desired result follows.}
\end{example}

\begin{example}
\emph{\label{ex2.6} Let }$P:\mathbb{R}^{N}\rightarrow \mathbb{R}^{N}$ \emph{%
be a linear transformation. For} $\varepsilon \in E=\mathbb{R}$, \emph{we
define}%
\begin{equation*}
\exp \left( -\varepsilon P\right) =\sum_{n=0}^{+\infty }\frac{\left(
-1\right) ^{n}}{n!}\varepsilon ^{n}P^{n}\text{.}
\end{equation*}%
\emph{Next, let }$k\in \mathbb{R}$ \emph{with} $k>\left\Vert P\right\Vert
=\sup_{\left\vert x\right\vert \leq 1}\left\vert Px\right\vert $. \emph{We
define} $\mathcal{H=}\left( H_{\varepsilon }\right) _{\varepsilon \in 
\mathbb{R}}$ \emph{as} $H_{\varepsilon }\left( x\right) =\exp \left(
-k\varepsilon \right) \exp \left( -\varepsilon P\right) x$ \emph{for} $x\in 
\mathbb{R}^{N}$. \emph{Each} $H_{\varepsilon }$ \emph{is an automorphism of} 
$\mathbb{R}^{N}$ \emph{and for fixed} $x\in \mathbb{R}^{N}$, \emph{the map }$%
\varepsilon \rightarrow H_{\varepsilon }\left( x\right) $\emph{\ sends
continuously }$\mathbb{R}$ \emph{into} $\mathbb{R}^{N}$. \emph{Finally, we
have} $\left\vert H_{\varepsilon }\left( x\right) \right\vert \leq
e^{-\left( k-\left\Vert P\right\Vert \right) \varepsilon }\left\vert
x\right\vert $ \emph{for} $x\in \mathbb{R}^{N}$, $\varepsilon \in \mathbb{R}$%
. \emph{Hence it follows that the one parameter group} $\mathcal{H}$ \emph{%
meets the requirements of Example \ref{ex2.5}, and is therefore an
absorptive continuous} \emph{action of} $\mathbb{R}$ \emph{on} $\mathbb{R}%
^{N}$.
\end{example}

\section{HOMOGENEOUS MEASURES}

Throughout the present section, $E$ denotes an $\mathbb{R}$-group (with $e$
and $\theta $ defined as in subsection 2.1), $X$ denotes a locally compact
space not reduced to one point, and $\mathcal{H=}\left( H_{\varepsilon
}\right) _{\varepsilon \in E}$ denotes an absorptive continuous action of $E$
on $X$ with center $\omega $.

Let us observe that, according to Proposition \ref{pr2.2}, the measure $%
\delta _{\omega }$ (Dirac measure at $\omega $) is invariant for $\mathcal{H}
$, i.e., $H_{\varepsilon }\left( \delta _{\omega }\right) =\delta _{\omega }$
for $\varepsilon \in E$. One may naturally question whether there exist
other invariant positive measures on $X$. The next proposition will allow us
to answer this question. Let us make a definition beforehand.

\begin{definition}
\emph{\label{def3.1}}\ \emph{A Radon measure }$\lambda $\emph{\ on }$X$\emph{%
\ is termed }nontrivial\emph{\ if }$\lambda $\emph{\ is distinct from both }$%
\delta _{\omega }$\emph{\ and }$0$\emph{\ (the zero measure on }$X$\emph{).}
\end{definition}

Finally, before turning to the statement and proof of the alleged
proposition, it is worth recalling that the notion of a homogeneous measure
on $X$ is defined in Section 1.

\begin{proposition}
\label{pr3.1} Let $\lambda $ be a nontrivial positive Radon measure on $X$.
Suppose $\lambda $ is homogeneous (for $\mathcal{H}$). Then there exists a
continuous group homomorphism $h:E\rightarrow \mathbb{R}_{+}^{\ast }$ such
that:%
\begin{equation}
H_{\varepsilon }\left( \lambda \right) =h\left( \varepsilon ^{-1}\right)
\lambda \text{\qquad }\left( \varepsilon \in E\right) \text{,}  \label{eq3.1}
\end{equation}%
\begin{equation}
\lim_{\varepsilon \rightarrow \theta }h\left( \varepsilon ^{-1}\right) =0%
\text{.}  \label{eq3.2}
\end{equation}%
On the other hand, we have that%
\begin{equation}
\lambda \left( \left\{ \omega \right\} \right) =0\text{.}  \label{eq3.3}
\end{equation}
\end{proposition}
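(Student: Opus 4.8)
The plan is to extract the multiplicative cocycle $c(\varepsilon)$ from property (H), show it is a continuous homomorphism, identify it with some $h(\varepsilon^{-1})$, and establish the decay and the vanishing at the center. First I would observe that since $\lambda$ is homogeneous, for each $\varepsilon\in E$ there is $c(\varepsilon)>0$ with $H_{\varepsilon}(\lambda)=c(\varepsilon)\lambda$. Applying this twice and using $H_{\varepsilon}\circ H_{\varepsilon'}=H_{\varepsilon\varepsilon'}$ together with the functoriality of the image measure (i.e.\ $H_{\varepsilon}(H_{\varepsilon'}(\lambda))=H_{\varepsilon\varepsilon'}(\lambda)$), one gets $c(\varepsilon\varepsilon')\lambda=c(\varepsilon)c(\varepsilon')\lambda$; since $\lambda\neq 0$ this forces $c(\varepsilon\varepsilon')=c(\varepsilon)c(\varepsilon')$, so $c:E\to\mathbb{R}_{+}^{\ast}$ is a group homomorphism and in particular $c(e)=1$. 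Then I would define $h(\varepsilon)=c(\varepsilon^{-1})$, which is again a homomorphism and gives (\ref{eq3.1}) by construction.

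The continuity of $h$ (equivalently of $c$) is the step I expect to be the main obstacle, since it is the only place where one must really use the continuity of the action and not just its algebraic structure. The idea is to fix a nonnegative $\varphi\in\mathcal{K}(X)$ with $\int\varphi\,d\lambda>0$ (possible because $\lambda\neq 0$), and to use that $\varepsilon\mapsto c(\varepsilon)\int\varphi\,d\lambda = \int\varphi(H_{\varepsilon}(x))\,d\lambda(x)$. Continuity of the right-hand side in $\varepsilon$ would follow from the continuity of the map $(\varepsilon,x)\mapsto H_{\varepsilon}(x)$ on $E\times X$ (so that $\varphi\circ H_{\varepsilon}\to\varphi\circ H_{\varepsilon_{0}}$ uniformly on a suitable compact set, using local compactness of $E$ to bound the supports of $\varphi\circ H_{\varepsilon}$ for $\varepsilon$ near $\varepsilon_{0}$), and then dividing by the positive constant $\int\varphi\,d\lambda$ gives continuity of $c$. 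The uniform support control near $\varepsilon_{0}$ is where absorptiveness is implicitly helpful: using an open elementary set $F$ and Corollary \ref{co2.3}, the support of $\varphi$ is contained in some $H_{n^{-1}}(F)$, and one pulls this back through $H_{\varepsilon^{-1}}$ to localize the supports of $\varphi\circ H_{\varepsilon}$ uniformly for $\varepsilon$ in a neighbourhood of $\varepsilon_{0}$.

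To get (\ref{eq3.2}), i.e.\ $\lim_{\varepsilon\to\theta}h(\varepsilon^{-1})=0$, which reads $\lim_{\varepsilon\to\theta}c(\varepsilon)=0$, I would argue by contradiction. Suppose not; then along a net $\varepsilon\to\theta$ the values $c(\varepsilon)$ stay bounded below by some $\delta>0$. Take an open elementary set $F$; by Corollary \ref{co2.3} any compact $K$ lies in $H_{n^{-1}}(F)$ for some $n$, and by Proposition \ref{pr2.3}(iii) the sets $H_{n}(F)$ shrink to a neighbourhood base at $\omega$. Evaluating the homogeneity relation on a fixed $\varphi\in\mathcal{K}(X)$, $\varphi\ge 0$, supported near $\omega$ but with $\varphi(\omega)>0$, and using Remark \ref{rem2.2} (so $H_{\varepsilon^{-1}}(x)\to\omega$ as $\varepsilon\to\theta$ for every $x$), dominated convergence gives $\int\varphi(H_{\varepsilon^{-1}}(x))\,d\lambda(x)\to\varphi(\omega)\lambda(X)$ — but this is problematic unless one first knows $\lambda$ is finite, so instead I would work with the other direction: from $H_{\varepsilon}(\lambda)=c(\varepsilon)\lambda$ evaluated on $\psi\in\mathcal{K}(X)$ with $\int\psi\,d\lambda>0$, write $c(\varepsilon)\int\psi\,d\lambda=\int\psi(H_{\varepsilon}(x))\,d\lambda(x)$; as $\varepsilon\to\theta$, Corollary \ref{co2.2} says $H_{\varepsilon}(x)\to\infty$ for $x\neq\omega$, so $\psi(H_{\varepsilon}(x))\to 0$ pointwise off $\{\omega\}$, and since the integrands are dominated by $\|\psi\|_{\infty}\mathbf{1}_{H_{\varepsilon^{-1}}(\operatorname{supp}\psi)}$ with these sets eventually contained in a fixed compact set (Corollary \ref{co2.3} again, applied with a reversed inclusion, using that the elementary-set family is increasing under $H_{n^{-1}}$), dominated convergence forces $c(\varepsilon)\int\psi\,d\lambda\to \psi(\omega)\lambda(\{\omega\})$.

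Finally, for (\ref{eq3.3}): from $H_{\varepsilon}(\lambda)=c(\varepsilon)\lambda$ and Proposition \ref{pr2.2}(i), $H_{\varepsilon}(\omega)=\omega$, so the image measure puts mass $\lambda(\{x:H_{\varepsilon}(x)=\omega\})\ge\lambda(\{\omega\})$ at $\omega$; but since each $H_{\varepsilon}$ is a bijection fixing $\omega$, in fact $H_{\varepsilon}(\lambda)(\{\omega\})=\lambda(\{\omega\})$, whence $c(\varepsilon)\lambda(\{\omega\})=\lambda(\{\omega\})$ for all $\varepsilon$. If $\lambda(\{\omega\})>0$ this gives $c(\varepsilon)=1$ for all $\varepsilon$, contradicting (\ref{eq3.2}) (which yields $c(\varepsilon)\to 0$ as $\varepsilon\to\theta$, and $\theta\notin E$ but there are elements of $E$ arbitrarily close to $\theta$, or simply $c(\varepsilon)\ne 1$ for $\varepsilon$ small). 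Hence $\lambda(\{\omega\})=0$. (This last argument also retroactively simplifies the limit computation in the previous paragraph, since the limit $\psi(\omega)\lambda(\{\omega\})$ is then $0$, giving $c(\varepsilon)\to 0$ directly and closing the contradiction for (\ref{eq3.2}); one should order the proof so that (\ref{eq3.3}) or at least the relation $c(\varepsilon)\lambda(\{\omega\})=\lambda(\{\omega\})$ is derived early.)
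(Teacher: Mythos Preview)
Your treatment of the homomorphism property and of continuity matches the paper's: fix $\varphi\in\mathcal{K}(X)$ with $\lambda(\varphi)\neq 0$, write $c(\varepsilon)=\lambda(\varphi)^{-1}\int\varphi(H_\varepsilon(x))\,d\lambda(x)$, and use an elementary set to control the supports uniformly near $\varepsilon_0$ so that dominated convergence applies. That part is fine.

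The genuine gap is in your handling of (\ref{eq3.2}) and (\ref{eq3.3}), where you have a circularity you yourself flag but do not close. Your dominated-convergence computation gives
\[
c(\varepsilon)\int\psi\,d\lambda\;\longrightarrow\;\psi(\omega)\,\lambda(\{\omega\}),
\]
which yields $c(\varepsilon)\to 0$ only if you already know $\lambda(\{\omega\})=0$; and your argument for $\lambda(\{\omega\})=0$ (``$c(\varepsilon)\lambda(\{\omega\})=\lambda(\{\omega\})$, so if $\lambda(\{\omega\})>0$ then $c\equiv 1$, contradicting (\ref{eq3.2})'') invokes (\ref{eq3.2}). Reordering to ``derive the relation $c(\varepsilon)\lambda(\{\omega\})=\lambda(\{\omega\})$ early'' does not by itself break the loop: that relation only tells you that either $\lambda(\{\omega\})=0$ or $c\equiv 1$, and you still need an independent reason to exclude $c\equiv 1$.

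The paper's device is exactly the missing idea: use the nontriviality of $\lambda$ to choose $\varphi\in\mathcal{K}(X)$ with $\varphi(\omega)=0$ \emph{and} $\lambda(\varphi)=1$. Then $\varphi(H_\varepsilon(x))\to 0$ for \emph{every} $x$ (at $x=\omega$ because $\varphi(\omega)=0$; at $x\neq\omega$ by Corollary~\ref{co2.2}), and dominated convergence gives $c(\varepsilon)=\int\varphi(H_\varepsilon(x))\,d\lambda(x)\to 0$ outright, with no appeal to $\lambda(\{\omega\})$. This is where nontriviality is actually used, and it eliminates the circularity in one stroke.

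Once (\ref{eq3.2}) is secured, your route to (\ref{eq3.3}) via $c(\varepsilon)\lambda(\{\omega\})=\lambda(\{\omega\})$ is perfectly valid and in fact shorter than the paper's: the paper instead takes a compact elementary set $F$, uses $\bigcap_n H_n(F)=\{\omega\}$ and downward continuity of measure to get $\lambda(\{\omega\})=\lim_n \lambda(H_n(F))=\lim_n h(n)\lambda(F)=0$. Your invariance-of-$\omega$ argument gets there more directly.
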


\begin{proof}
First of all, it is trivial that for fixed $\varepsilon \in E$, the number $%
c\left( \varepsilon \right) $ in (H) (see Section 1) is unique. This yields
a map $\varepsilon \rightarrow c\left( \varepsilon \right) $ of $E$ into $%
\mathbb{R}_{+}^{\ast }$. This map is a homomorphism, i.e., $c\left(
\varepsilon \varepsilon ^{\prime }\right) =c\left( \varepsilon \right)
c\left( \varepsilon ^{\prime }\right) $ for $\varepsilon ,\varepsilon
^{\prime }\mathcal{\in }E$, as is immediate by the equality $H_{\varepsilon
\varepsilon ^{\prime }}\left( \lambda \right) =H_{\varepsilon ^{\prime
}}\left( H_{\varepsilon }\left( \lambda \right) \right) $ (see \cite{bib4},
p.72). Let us show that this homomorphism is continuous. It is enough to
check the continuity at $\varepsilon =e$. For this purpose, fix $\varphi \in 
\mathcal{K}\left( X\right) $ with $\lambda \left( \varphi \right) =1$ (such
a $\varphi $ exists because $\lambda $ is nonzero). Then%
\begin{equation}
c\left( \varepsilon \right) =\int \varphi \left( H_{\varepsilon }\left(
x\right) \right) d\lambda \left( x\right) \qquad \left( \varepsilon \in
E\right) \text{.}  \label{eq3.4}
\end{equation}%
Now, let $F$ be an open elementary set containing the support of $\varphi $
(use corollary \ref{co2.3} and observe that the set $H_{n^{-1}}\left(
F\right) $ therein is elementary) and let $\alpha \in E$ with $\alpha >e$.
The set $I=(\theta ,\alpha ]\cap E$ is a neighbourhood of $e$ in $E$ and we
have $H_{\varepsilon ^{-1}}\left( F\right) \subset H_{\alpha ^{-1}}\left(
F\right) $ for $\varepsilon \in I$. It follows that $Supp\left( \varphi
\circ H_{\varepsilon }\right) \subset H_{\alpha ^{-1}}\left( F\right) $ for $%
\varepsilon \in I$ ($Supp$ stands for "support"). Hence%
\begin{equation}
\left\vert \varphi \left( H_{\varepsilon }\left( x\right) \right)
\right\vert \leq \left\Vert \varphi \right\Vert _{\infty }f\left( x\right)
\qquad \left( x\in X\text{, }\varepsilon \in I\right) \text{,}  \label{eq3.5}
\end{equation}%
where $f$ is the characteristic function of $H_{\alpha ^{-1}}\left( F\right) 
$ in $X$. Therefore the claimed continuity follows by a classical argument
(see, e.g., \cite{bib5}, p.144). The result is that the map $h:E\rightarrow 
\mathbb{R}_{+}^{\ast }$ given by $h\left( \varepsilon \right) =c\left(
\varepsilon ^{-1}\right) $, $\varepsilon \in E$, is a continuous
homomorphism and further (\ref{eq3.1}) holds. The next point is to check (%
\ref{eq3.2}), that is, $\lim_{\varepsilon \rightarrow \theta }c\left(
\varepsilon \right) =0$. To this end, let $\varphi \in \mathcal{K}\left(
X\right) $ with $\varphi \left( \omega \right) =0$ and $\lambda \left(
\varphi \right) =1$ (such a $\varphi $ does exist because $\lambda $ is
nontrivial; use \cite{bib5}, p.43, Lemme 1). For any $x\in X$ ($x=\omega $
included!), we have $\varphi \left( H_{\varepsilon }\left( x\right) \right)
\rightarrow 0$ as $\varepsilon \rightarrow \theta $, as is straightforward
by Corollary \ref{co2.2} and use of the fact that $\varphi $ has a compact
support. Hence, by (\ref{eq3.4})-(\ref{eq3.5}) and use of the dominated
convergence theorem, it follows that $\lim_{\varepsilon \rightarrow \theta
}c\left( \varepsilon \right) =\lim_{\varepsilon \rightarrow \theta }h\left(
\varepsilon ^{-1}\right) =0$. It remains to check that (\ref{eq3.3}) holds.
Let $F$ be a compact elementary set. In view of part (iii) of Proposition %
\ref{pr2.3}, we have ${\large \cap }_{n=1}^{+\infty }H_{n}\left( F\right)
=\left\{ \omega \right\} $. Combining this with the relation $H_{n+1}\left(
F\right) \subset H_{n}\left( F\right) $ and using a classical argument from
integration theory we arrive at $\lambda \left( \left\{ \omega \right\}
\right) =\lim_{n\rightarrow +\infty }\lambda \left( H_{n}\left( F\right)
\right) $. Hence the claimed result follows by the equality $\lambda \left(
H_{n}\left( F\right) \right) =h\left( n\right) \lambda \left( F\right) $ and
use of the fact that $h\left( n\right) \rightarrow 0$ as $n\rightarrow
+\infty $ (this is straightforward from (\ref{eq3.2})).
\end{proof}

As a direct consequence of this, there is the following corollary.

\begin{corollary}
\label{co3.1} There exists no nontrivial invariant positive measure on $X$
(for $\mathcal{H}$).
\end{corollary}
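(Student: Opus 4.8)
The plan is to argue by contradiction, reducing the existence of a nontrivial invariant positive measure to a contradiction with the conclusion of Proposition \ref{pr3.1}. Suppose $\lambda$ is a nontrivial invariant positive Radon measure on $X$, i.e. $H_{\varepsilon}(\lambda) = \lambda$ for all $\varepsilon \in E$. Then $\lambda$ trivially satisfies property (H) with $c(\varepsilon) = 1$ for every $\varepsilon$, so $\lambda$ is in particular homogeneous for $\mathcal{H}$. Since $\lambda$ is also nontrivial by hypothesis, Proposition \ref{pr3.1} applies.

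Next I would extract the contradiction from (\ref{eq3.1}) together with (\ref{eq3.2}). Proposition \ref{pr3.1} furnishes a continuous homomorphism $h: E \rightarrow \mathbb{R}_{+}^{\ast}$ with $H_{\varepsilon}(\lambda) = h(\varepsilon^{-1})\lambda$ and $\lim_{\varepsilon \rightarrow \theta} h(\varepsilon^{-1}) = 0$. But invariance forces $h(\varepsilon^{-1})\lambda = \lambda$ for every $\varepsilon \in E$, and since $\lambda \neq 0$ there is some $\varphi \in \mathcal{K}(X)$ with $\lambda(\varphi) \neq 0$; dividing by $\lambda(\varphi)$ gives $h(\varepsilon^{-1}) = 1$ for all $\varepsilon \in E$. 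This is flatly incompatible with (\ref{eq3.2}), since the constant value $1$ cannot tend to $0$ as $\varepsilon \rightarrow \theta$ (recall $\theta$ is a genuine limit point approached within $E$, e.g. through the positive integers $n \rightarrow +\infty$ when $\theta = -\infty$, or through a sequence decreasing to $0$ when $\theta = 0$, as in Examples \ref{ex2.1}--\ref{ex2.3}). This contradiction shows no such $\lambda$ exists.

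I do not anticipate any real obstacle here: the corollary is essentially immediate once Proposition \ref{pr3.1} is in hand, the only point requiring a word of care being the observation that an invariant measure is a special case of a homogeneous one (so that the proposition is actually applicable) and the elementary remark that $\theta$ is approached within $E$ so that the limit in (\ref{eq3.2}) is not vacuous. Both are routine. One could alternatively phrase the argument directly through (\ref{eq3.3}): taking a compact elementary set $F$, invariance gives $\lambda(H_n(F)) = \lambda(F)$ for all $n$, while Proposition \ref{pr3.1} (or the argument in its proof) gives $\lambda(\{\omega\}) = \lim_n \lambda(H_n(F)) = \lambda(F)$; combined with $\lambda(\{\omega\}) = 0$ this yields $\lambda(F) = 0$ for every elementary set $F$, and since by Corollary \ref{co2.4} such sets form a neighbourhood base at $\omega$ while their $H_{n^{-1}}$-translates cover $X$ (Proposition \ref{pr2.3}(iii)), one concludes $\lambda = 0$ — again a contradiction. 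I would present the first, shorter argument.
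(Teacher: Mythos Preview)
Your argument is correct and is exactly the intended one: the paper presents this corollary without proof, simply as a direct consequence of Proposition \ref{pr3.1}, and your contradiction (invariant $\Rightarrow$ homogeneous with $h\equiv 1$, contradicting (\ref{eq3.2})) is precisely how that direct consequence unfolds. Your alternative route via (\ref{eq3.3}) is also valid but, as you note, unnecessary.
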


The next result is about the existence of nontrivial homogeneous positive
measures.

\begin{theorem}
\label{th3.1} Suppose each point in $X$ has a countable base of
neighbourhoods. Then, there is always some nontrivial homogeneous positive
Radon measure on $X$. More precisely, to each group homomorphism $%
h:E\rightarrow \mathbb{R}_{+}^{\ast }$ meeting the requirements of \emph{(RG)%
}$_{3}$ (in Definition \ref{def2.1}), there is attached a nontrivial
positive Radon measure $\lambda $ on $X$ satisfying (\ref{eq3.1}).
\end{theorem}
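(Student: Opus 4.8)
The plan is to construct $\lambda$ by an averaging (Haar-transport) procedure modelled on the examples: fix an open elementary set $F$ (Proposition \ref{pr2.3}(iii)), fix a continuous homomorphism $h:E\rightarrow\mathbb{R}_+^\ast$ satisfying (RG)$_3$, and define a positive linear functional on $\mathcal{K}(X)$ by integrating $\varphi\circ H_{\varepsilon^{-1}}$ against the measure $h\,m$ on $E$, after a suitable normalization. Concretely, for $\varphi\in\mathcal{K}(X)$ one would like to set
\begin{equation*}
\lambda(\varphi)=\int_E \varphi\bigl(H_{\varepsilon^{-1}}(x_0)\bigr)\,\text{(something)}\,dm(\varepsilon),
\end{equation*}
but a single orbit will not see all of $X$, so instead the functional must be built so that the homogeneity relation (\ref{eq3.1}) is forced by the modular behaviour of Haar measure under the substitution $\varepsilon\mapsto\varepsilon\alpha$. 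The natural candidate is: choose a compact elementary set $F$, and for $\varphi\in\mathcal{K}(X)$ supported in $H_{n^{-1}}(\mathring F)$, define $\lambda(\varphi)$ via a limit or a telescoping sum over the exhausting family $\{H_{n^{-1}}(F)\}$ (Corollary \ref{co2.3}), where on each "shell" $H_{n^{-1}}(F)\setminus H_{(n+1)^{-1}}(F)$ one transports a fixed measure on a fixed compact piece by the maps $H_n$ and weights by $h(n)$. The countable-base hypothesis enters to guarantee the resulting set function is genuinely a Radon measure (metrizability/regularity of the relevant compacta, and the reduction of positivity-plus-additivity to a Radon measure via the Riesz representation on $\sigma$-compact locally compact spaces).

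The key steps, in order, are: (1) Fix $h$ as in (RG)$_3$ and a compact elementary set $F$ with open interior $\mathring F\ni\omega$; record that $H_{n^{-1}}(F)$ increases to $X$ and $H_n(F)$ shrinks to $\{\omega\}$. (2) On the "fundamental domain" $F\setminus H_1(F)$ (or rather on $F$ itself), produce an auxiliary positive measure $\mu_0$ — e.g. by a partition-of-unity/Hahn–Banach argument, or by pulling back Haar measure from a transversal to the $E$-action — such that the action of $H_1$ relates $\mu_0$ on $F$ and on $H_1(F)$ in the required way; the integrability clause in (RG)$_3$ (that $E_\alpha=\{\varepsilon\geq\alpha\}$ is $h\,m$-integrable) is exactly what makes the total mass of the transported pieces summable near $\omega$. (3) Define $\lambda=\sum_{n\in\mathbb{Z}} h(n^{-1})\,H_{n^{-1}}(\mu_0\!\restriction\text{shell})$ — more precisely, glue the transported measures over the shells $H_{n^{-1}}(F)\setminus H_{(n+1)^{-1}}(F)$, $n\in\mathbb{Z}$, using $h(n)$ as the weight dictated by the desired relation (\ref{eq3.1}); convergence is controlled on compacta by Corollary \ref{co2.3} together with the (RG)$_3$ integrability. (4) Verify $\lambda$ is a nonzero positive Radon measure (use $\sigma$-compactness from Corollary \ref{co2.1} and the countable neighbourhood base). (5) Check $H_\varepsilon(\lambda)=h(\varepsilon^{-1})\lambda$ first for $\varepsilon=n$ a positive integer (where it is essentially the definition: $H_n$ shifts the shell decomposition by one and the weights $h$ transform multiplicatively), then for $\varepsilon\in E$ arbitrary by a density/continuity argument — approximate $\varepsilon$ suitably and use continuity of the action together with the already-established homomorphism property of the would-be constant $c(\varepsilon)$; alternatively, derive the relation on all of $E$ directly from the modular identity for Haar measure on $E$ under right translation combined with how $H$ intertwines translation on $E$ with the action on $X$. (6) Nontriviality: $\lambda\neq 0$ by construction ($\mu_0\neq 0$), and $\lambda\neq\delta_\omega$ because $\lambda(\{\omega\})=0$ — this follows as in the proof of Proposition \ref{pr3.1}, since $\lambda(H_n(F))=h(n)\lambda(F)\to 0$.

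The main obstacle I expect is Step (2)–(3): producing the auxiliary measure $\mu_0$ on the fundamental piece in a way that is simultaneously (a) nonzero, (b) compatible with the $H_1$-transport so that the glued object is consistent on overlaps of consecutive shells, and (c) locally finite with summable transported masses. One clean route is to pick, using local compactness and the countable base, a compact neighbourhood $K$ of $\omega$ inside $F$ such that $K\setminus H_1(\mathring F)$ is a "slice" meeting each orbit $\{H_{\varepsilon^{-1}}(x):\varepsilon\in E\}$ in a controlled set, put an arbitrary nonzero Radon measure there, and then define $\lambda$ on a test function $\varphi$ by $\int_E \bigl(\sum \varphi\circ H_{\varepsilon^{-1}n}\bigr)\,h(\varepsilon)\,dm(\varepsilon)$ — the inner sum being finite by Corollary \ref{co2.3} — and check that the $m$-integral converges using the (RG)$_3$ integrability of $E_\alpha$. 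The verification that this is additive, positive, and satisfies (\ref{eq3.1}) is then bookkeeping with Fubini and the translation-invariance of $m$, but setting it up so that all the convergence is transparently controlled on each compact subset of $X$ is the delicate part, and this is precisely where the countable-base hypothesis is used to keep the construction within the theory of Radon measures on $\sigma$-compact spaces.
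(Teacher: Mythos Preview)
Your shell-decomposition construction has a genuine gap at Step~(5). If you define $\lambda$ as a sum over integer-indexed shells weighted by $h(n)$, then by construction you only get $H_n(\lambda)=h(n^{-1})\lambda$ for $n\in\mathbb{Z}$, and your proposed extension to all of $E$ ``by density/continuity'' cannot work: when $E=\mathbb{R}$ or $E=\mathbb{R}_+^\ast$ the integers are not dense in $E$, and there is no a~priori continuity of $\varepsilon\mapsto H_\varepsilon(\lambda)$ to appeal to. The ``alternative'' you mention at the end of Step~(5) --- using Haar measure on $E$ directly --- is in fact the correct route, and once you take it the entire shell apparatus becomes unnecessary.

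The paper's construction is much simpler than what you outline. Fix any point $a\in X$ with $a\neq\omega$, set $\nu=h\cdot m$ on $E$, and push $\nu$ forward along the orbit map $G_a:E\to X$, $G_a(\varepsilon)=H_\varepsilon(a)$. Corollary~\ref{co2.2} says $H_\varepsilon(a)$ leaves every compact set for $\varepsilon<\alpha$, so $G_a^{-1}(K)\subset E_\alpha$ for each compact $K\subset X$; the (RG)$_3$ integrability of $E_\alpha$ is exactly what makes $G_a$ $\nu$-proper and hence $\lambda_a=G_a(\nu)$ a genuine Radon measure. Homogeneity for \emph{every} $s\in E$ is then a one-line computation: $H_s(\lambda_a)(\varphi)=\int\varphi(H_{s\varepsilon}(a))\,d\nu(\varepsilon)=h(s^{-1})\lambda_a(\varphi)$ by translation-invariance of $m$. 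Your worry that ``a single orbit will not see all of $X$'' is beside the point --- the theorem only asks for \emph{some} nontrivial homogeneous measure, and $\lambda_a$ already qualifies (it is nonzero with support $\overline{G_a(E)}$, and $\lambda_a\neq\delta_\omega$ since $h$ is nonconstant). The paper does go on to average $\lambda_x$ over $x$ in the support of an auxiliary compactly supported $\mu$ on $X\setminus\{\omega\}$, but with $\mu=\delta_a$ this reduces to the single-orbit measure above. Note also that the compatibility-on-overlaps obstacle you flag as the hard part simply does not arise in this approach.
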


\begin{proof}
Let us fix a homomorphism $h:E\rightarrow \mathbb{R}_{+}^{\ast }$ as stated
above. Let $\nu =h{\small \cdot }m$, where $m$ denotes Haar measure on $E$.
Thus, $\nu $ is the Radon measure on $E$ given by $\nu \left( \varphi
\right) =\int \varphi \left( \varepsilon \right) h\left( \varepsilon \right)
dm\left( \varepsilon \right) $ for $\varphi \in \mathcal{K}\left( E\right) $%
. Now, fix a nonzero positive Radon measure $\mu $ on $X$ with compact
support $S$ contained in the open set $X^{\ast }=X\backslash \left\{ \omega
\right\} $ (e.g., $\mu $ is the Dirac measure at some point $a\in X^{\ast }$%
). On the other hand, fix freely some $x\in X^{\ast }$ and denote by $G_{x}$
the map of $E$ into $X$ given by $G_{x}\left( \varepsilon \right)
=H_{\varepsilon }\left( x\right) $ $\left( \varepsilon \in E\right) $. Our
preliminary task is to check that this map is $\nu $-proper (i.e., $G_{x}$
is $\nu $-measurable and further the inverse image, $G_{x}^{-1}\left(
K\right) $, of any compact set $K\subset X$ is $\nu $-integrable) so we can
define the image measure $G_{x}\left( \nu \right) $ on $X$ (see, e.g., \cite%
{bib4}, p.69). To begin, note that $G_{x}$ is $\nu $-measurable, since it is
continuous. Next, let $K$ be any compact set in $X$. According to Corollary %
\ref{co2.2}, we may consider some $\alpha \in E$ such that $H_{\varepsilon
}\left( x\right) \in X\backslash K$ for $\varepsilon <\alpha $. Then, we
have $G_{x}^{-1}\left( K\right) \subset E_{\alpha }=\left\{ \varepsilon \in
E:\varepsilon \geq \alpha \right\} $. Hence the integrability of $%
G_{x}^{-1}\left( K\right) $ follows from that of $E_{\alpha }$ (see (RG)$%
_{3} $). Consequently we can define $\lambda _{x}=G_{x}\left( \nu \right) $,
i.e., $\lambda _{x}$ is the Radon measure on $X$ given by 
\begin{equation*}
\lambda _{x}\left( \varphi \right) =\int \varphi \left( H_{\varepsilon
}\left( x\right) \right) d\nu \left( \varepsilon \right) \qquad \quad \left(
\varphi \in \mathcal{K}\left( X\right) \right)
\end{equation*}%
and that for any arbitrarily fixed $x\in X^{\ast }$. The measure $\lambda
_{x}$ is nonzero (indeed, according to \cite{bib4}, p.70, Corollaire 4, the
support of $\lambda _{x}$ is precisely the closure of $G_{x}\left( E\right) $
in $X$) and manifestly positive.

At the present time, let $\mathcal{M}_{+}\left( X\right) $ stand for the
convex cone of all positive Radon measures on $X$, $\mathcal{M}_{+}\left(
X\right) $ provided with the relative weak $\ast $ topology on $\mathcal{M}%
\left( X\right) $ (space of all complex Radon measures on $X$). We introduce
the mapping $\Lambda :X\rightarrow \mathcal{M}_{+}\left( X\right) $ given by%
\begin{equation*}
\Lambda \left( \omega \right) =\delta _{\omega }\text{\ and }\Lambda \left(
x\right) =\lambda _{x}\text{ if }x\neq \omega \text{,}
\end{equation*}%
where $\delta _{\omega }$ is Dirac measure (on $X$) at $\omega $. Our
purpose is the following: Firstly we show that $\Lambda $ is $\mu $%
-integrable (for integration of positive measures see \cite{bib4}), i.e.,
for each fixed $\varphi \in \mathcal{K}\left( X\right) $,the complex
function $x\rightarrow \Lambda \left( x\right) \left( \varphi \right) $ on $X
$ is $\mu $-integrable. Secondly we show that the integral of $\Lambda $ for 
$\mu $, namely the measure $\lambda =\int \Lambda \left( x\right) d\mu
\left( x\right) $ on $X$, has the required properties. This will be
accomplished in two steps.

\textbf{Step1}. The aim here is to show that $\Lambda $ is $\mu $%
-integrable. So, let $\varphi \in \mathcal{K}\left( X\right) $ be
arbitrarily fixed. We begin by verifying that the map $x\rightarrow \Lambda
\left( x\right) \left( \varphi \right) $ of $X$ into $\mathbb{C}$ is $\mu $%
-measurable. Noting that the set $\left\{ \omega \right\} $ is $\mu $%
-negligible, we see that it is enough to check that the map $x\rightarrow
\lambda _{x}\left( \varphi \right) $ of $X^{\ast }$ into $\mathbb{C}$ is
continuous. We will need the following property:\bigskip

(P) To any given compact set $K\subset X^{\ast }$ there is attached some $%
\alpha \in E$ such that $\left\vert \varphi \left( H_{\varepsilon }\left(
x\right) \right) \right\vert \leq \left\Vert \varphi \right\Vert _{\infty
}f_{\alpha }\left( \varepsilon \right) $ for $\varepsilon \in E$, $x\in K$,
where $f_{\alpha }$ denotes the characteristic function (in $E$) of $%
E_{\alpha }=\left\{ \varepsilon \in E:\varepsilon \geq \alpha \right\} $%
.\bigskip

\noindent To establish (P), we introduce a compact elementary set $F$ such
that $Supp\varphi \subset F$ (use Corollary \ref{co2.3}). Put $U=X\backslash
F$ and bear in mind that $\varepsilon \leq \varepsilon ^{\prime }$ implies $%
H_{\varepsilon }\left( U\right) \subset H_{\varepsilon ^{\prime }}\left(
U\right) $. Now, fix freely any compact set $K\subset X^{\ast }$. By
assigning to $x\in K$ some $\alpha _{x}\in E$ such that $x\in H_{\alpha
_{x}^{-1}}\left( U\right) $ (use Corollary \ref{co2.2}), we get an open
covering $\left\{ H_{\alpha _{x}^{-1}}\left( U\right) \right\} _{x\in K}$ of 
$K$, from which we extract a finite family $\left\{ H_{\alpha
_{i}^{-1}}\left( U\right) \right\} _{1\leq i\leq n}$ covering $K$. It
follows that $K\subset H_{\alpha ^{-1}}\left( U\right) $ with $\alpha
=\min_{1\leq i\leq n}\alpha _{i}$, hence $H_{\varepsilon }\left( K\right)
\subset U$ for $\varepsilon \mathfrak{<}\alpha $. We deduce that $\varphi
\left( H_{\varepsilon }\left( x\right) \right) =0$ for $x\in K$ and $%
\varepsilon \mathfrak{<}\alpha $, from which (P) follows.

Having made this point, the continuity of the function $x\rightarrow \lambda
_{x}\left( \varphi \right) $ $\left( x\in X^{\ast }\right) $ at any
arbitrarily given $a\in X^{\ast }$ follows by choosing in (P) the compact
set $K$ as being a neighbourhood of $a$ and then applying a classical
argument (see \cite{bib5}, p.144) to the mapping $\left( \varepsilon
,x\right) \rightarrow \varphi \left( H_{\varepsilon }\left( x\right) \right) 
$ of $E\times X$ into $\mathbb{C}$. This shows the $\mu $-measurability of
the function $x\rightarrow \Lambda \left( x\right) \left( \varphi \right) $%
\quad $\left( x\in X\right) $. Thus, to conclude that this function is $\mu $%
-integrable it only remains to check that some nonnegative $\mu $-integrable
\ function $\psi :X\rightarrow \mathbb{R}$ exists such that $\left\vert
\Lambda \left( x\right) \left( \varphi \right) \right\vert \leq \psi \left(
x\right) $ for $\mu $-almost all $x\in X$, or equivalently (see, e.g., \cite%
{bib5}, p.156) such that $\left\vert \mathcal{\chi }_{S}\left( x\right)
\Lambda \left( x\right) \left( \varphi \right) \right\vert \leq \psi \left(
x\right) $ for $\mu $-almost all $x\in X$, where $\mathcal{\chi }_{S}$ is
the characteristic function\ of $S$ (the support of $\mu $). But this is
straightforward by (P). Indeed, choosing in (P) the particular compact set $%
K=S$ yields some $\alpha \mathfrak{\in }E$ such that $\left\vert \mathcal{%
\chi }_{S}\left( x\right) \lambda _{x}\left( \varphi \right) \right\vert
\leq \left\Vert \varphi \right\Vert _{\infty }\nu \left( E_{\alpha }\right) 
\mathcal{\chi }_{S}\left( x\right) $ for all $x\in X$ with $x\neq \omega $.
This completes Step1.

\textbf{Step2}. According to Step1, we may put%
\begin{equation*}
\lambda =\int \lambda _{x}d\mu \left( x\right) \text{.}
\end{equation*}%
Specifically, $\lambda $ is the positive Radon measure on $X$ given by $%
\lambda \left( \varphi \right) =\int \lambda _{x}\left( \varphi \right) d\mu
\left( x\right) $\quad $\left( \varphi \in \mathcal{K}\left( X\right)
\right) $, or more explicitly (see \cite{bib4}, p.17) by 
\begin{equation*}
\lambda \left( \varphi \right) =\int d\mu \left( x\right) \int \varphi
\left( y\right) d\lambda _{x}\left( y\right) \qquad \left( \varphi \in 
\mathcal{K}\left( X\right) \right) \text{.}
\end{equation*}%
Our purpose in the present step is to verify that $\lambda $ has the
required properties. The first point will be to verify that $\lambda $ is
nonzero. Let $\varphi \in \mathcal{K}\left( X\right) $ with $\varphi \geq 0$
and $\varphi =1$ on $S$ ($\varphi $ exists by Urysohn's lemma; see also \cite%
{bib5}, p.43, Lemma 1). We claim that $\lambda \left( \varphi \right) \neq 0$%
. Indeed, assuming the contrary leads to $\int \lambda _{x}\left( \varphi
\right) \psi \left( x\right) d\mu \left( x\right) =\lambda \left( \varphi
\right) =0$, where $\psi \in \mathcal{K}\left( X\right) $, $\psi \geq 0$, $%
\psi =1$ in a neighbourhood of $S$, $\psi $ having support in $X^{\ast }$.
Consequently $\lambda _{x}\left( \varphi \right) \psi \left( x\right) =0$
for any $x\in S$ (use \cite{bib5}, p.69, Proposition 9). Hence $\lambda
_{x}\left( \varphi \right) =0$ for $x\in S$. We deduce that $\varphi \left(
H_{\varepsilon }\left( x\right) \right) =0$ for $x\in S$ and $\varepsilon
\in E$ (note that the support of $\nu $ is the whole $E$). Therefore $%
\varphi =0$ on $S$, a contradiction and so $\lambda $ is a nonzero positive
measure. The next point is to establish (\ref{eq3.1}). To this end fix
freely some $s\in E$ and begin by recalling that the translate $\tau _{s}\nu 
$ is defined to be the Radon measure on $E$ given by $\tau _{s}\nu \left(
f\right) =\int f\left( s\varepsilon \right) d\nu \left( \varepsilon \right) $%
\quad $\left( f\in \mathcal{K}\left( E\right) \right) $, and by bearing the
equation $\tau _{s}\nu =h\left( s^{-1}\right) \nu $ in mind (this is
immediate by the translation invariance of $m$). Then, given any arbitrary $%
\varphi \in \mathcal{K}\left( X\right) $, one has 
\begin{equation*}
H_{s}\left( \lambda \right) \left( \varphi \right) =\int \varphi \left(
H_{s}\left( x\right) \right) d\lambda \left( x\right)
\end{equation*}%
\begin{equation*}
\qquad \qquad \qquad \qquad \quad =\int d\mu \left( x\right) \int \varphi
\left( H_{s}\left( y\right) \right) d\lambda _{x}\left( y\right)
\end{equation*}%
\begin{equation*}
\qquad \qquad \qquad \qquad =\int d\mu \left( x\right) \int \varphi \left(
H_{s\varepsilon }\left( x\right) \right) d\nu \left( \varepsilon \right)
\end{equation*}%
\begin{equation*}
\qquad \qquad \qquad \qquad \qquad \quad =h\left( s^{-1}\right) \int d\mu
\left( x\right) \int \varphi \left( H_{\varepsilon }\left( x\right) \right)
d\nu \left( \varepsilon \right)
\end{equation*}%
\begin{equation*}
\qquad \qquad \qquad =h\left( s^{-1}\right) \int \lambda _{x}\left( \varphi
\right) d\mu \left( x\right)
\end{equation*}%
\begin{equation*}
\qquad \qquad =h\left( s^{-1}\right) \int \varphi d\lambda \text{.}
\end{equation*}%
Hence (\ref{eq3.1}) follows. Finally it is clear that $\lambda \neq \delta
_{\omega }$, since $h$ is not the constant homomorphism $\mathcal{\chi }%
:E\rightarrow \mathbb{R}_{+}^{\ast }$ (viz. $\mathcal{\chi }\left(
\varepsilon \right) =1$ for $\varepsilon \in E$). The theorem is proved.
\end{proof}

We are now able to set the following definition.

\begin{definition}
\emph{\label{def3.2} By a }homogenizer\emph{\ we will mean a triple} $\left(
X,\mathcal{H},\lambda \right) $ \emph{in which:}

\emph{(i) }$X$\emph{\ is a noncompact locally compact space and further each
point in }$X$\emph{\ has a countable base of neighbourhoods;}

\emph{(ii)} $\mathcal{H}$ \emph{is an absorptive continuous} $\mathbb{R}$%
\emph{-group action on }$X$\emph{;}

\emph{(iii) }$\lambda $\emph{\ is a homogeneous nontrivial positive Radon\
measure\ on }$X$\emph{.}
\end{definition}

It is worth recalling that if $X$ is as above, then, on one hand its
topology is never the discrete one, on the other hand $X$ is $\sigma $%
-compact (see Corollary \ref{co2.1}).

\begin{remark}
\emph{\label{rem3.1} Given a homogenizer }$\left( X,\mathcal{H},\lambda
\right) $, \emph{we shall always assume that }$X$\emph{\ is equipped with\
the measure }$\lambda $\emph{.}
\end{remark}

\section{MEAN VALUE}

Throughout the present section, $\left( X,\mathcal{H},\lambda \right) $ is a
given homogenizer with $\mathcal{H=}\left( H_{\varepsilon }\right)
_{\varepsilon \in E}$, where $E$ is the acting $\mathbb{R}$-group (with $e$
and $\theta $ as before; see subsection 2.1). On the other hand, to the
measure $\lambda $ there is attached a (unique) continuous homomorphism $%
h:E\rightarrow \mathbb{R}_{+}^{\ast }$ such that (\ref{eq3.1})-(\ref{eq3.2})
hold true.

Before embarking upon discussing the mean value on $\left( X,\mathcal{H}%
,\lambda \right) $, let us first draw attention to what is generically meant
by a mean value on a topological space. Let $\mathbf{T}$ be a Hausdorff
topological space, and let $\mathcal{B}\left( \mathbf{T}\right) $ denote the
space of bounded continuous complex functions on $\mathbf{T}$. We provide $%
\mathcal{B}\left( \mathbf{T}\right) $ with the supremum norm, viz. $%
\left\Vert u\right\Vert _{\infty }=\sup_{t\in \mathbf{T}}\left\vert u\left(
t\right) \right\vert $\quad $\left( u\in \mathcal{B}\left( \mathbf{T}\right)
\right) $, which makes it a Banach space.

\begin{definition}
\emph{\label{def4.1} A }mean value\emph{\ on }$\mathbf{T}$ \emph{is defined
to be an unbounded linear operator }$m$ \emph{from} $\mathcal{B}\left( 
\mathbf{T}\right) $ \emph{to} $\mathbb{C}$ \emph{with the following
properties:}

\emph{(MV)}$_{1}$ \emph{The domain, }$D\left( m\right) $\emph{, of }$m$\emph{%
\ is a closed vector subspace of }$\mathcal{B}\left( \mathbf{T}\right) $ 
\emph{containing the constants.}

\emph{(MV)}$_{2}$\emph{\ We have}

\emph{(i) }$m\left( f\right) \geq 0$\emph{\ for }$f\in D\left( m\right) $%
\emph{\ with }$f\geq 0$\emph{,}

\emph{(ii) }$m\left( 1\right) =1$\emph{.}
\end{definition}

There is no difficulty in verifying that a mean value $m$ on $\mathbf{T}$
satisfies the inequality $\left\vert m\left( f\right) \right\vert \leq
\left\Vert f\right\Vert _{\infty }$ for $f\in D\left( m\right) $ (proceed as
in \cite{bib20}, Proposition \ref{pr2.1}). Likewise it is an easy exercise
to give various examples of mean values (see, e.g., \cite{bib20}).

We turn now to a more specific case. Let $\Pi ^{\infty }\left( X,\mathcal{H}%
,\lambda \right) $ be the space of those functions $u\in \mathcal{B}\left(
X\right) $ for which a complex number $\widetilde{u}$ exists such that $%
u\circ H_{\varepsilon }\rightarrow \widetilde{u}$ in $L^{\infty }\left(
X\right) $-weak $\ast $ as $\varepsilon \rightarrow \theta $, where we still
call $\widetilde{u}$ the constant function $f\in \mathcal{B}\left( X\right) $%
, $f\left( x\right) =\widetilde{u}$\quad $\left( x\in X\right) $. It is an
easy matter to check that $\Pi ^{\infty }\left( X,\mathcal{H},\lambda
\right) $ is a closed vector subspace of $\mathcal{B}\left( X\right) $
(equipped with the supremum norm).

\begin{definition}
\emph{\label{def4.2}} \emph{By the mean value on} $\left( X,\mathcal{H}%
,\lambda \right) $ \emph{is meant the unbounded }\ \emph{linear operator }$M$%
\emph{\ from} $\mathcal{B}\left( X\right) $ \emph{to} $\mathbb{C}$, \emph{%
whose domain is} $D\left( M\right) =\Pi ^{\infty }\left( X,\mathcal{H}%
,\lambda \right) $ \emph{and whose value}\ \emph{at} $u\in D\left( M\right) $
\emph{is} $M\left( u\right) =\widetilde{u}$ \emph{(the above weak limit).}
\end{definition}

The mean value on $\left( X,\mathcal{H},\lambda \right) $ verifies (MV)$_{1}$%
-(MV)$_{2}$ (this is an easy exercise) and so Definition \ref{def4.2} is
justified. Furthermore, as will be clarified in the next section, the mean
value on $\left( X,\mathcal{H},\lambda \right) $ generalizes the mean value
for ponderable functions introduced earlier on numerical spaces (see \cite%
{bib20}). On the other hand, it extends various notions of mean value
available in mathematical analysis. Let us exhibit two such examples.

\begin{example}
\emph{\label{ex4.1} Let }$\mathcal{B}_{\infty }\left( X\right) $ \emph{be
the space of continuous complex\ functions} $u$\emph{\ on }$X$\emph{\ such
that} $\lim_{x\rightarrow \infty }u\left( x\right) =\xi \in \mathbb{C}$, 
\emph{where }$\infty $\emph{\ is the point at infinity of the Alexandroff} 
\emph{compactification of }$X$. $\mathcal{B}_{\infty }\left( X\right) $ 
\emph{is a closed vector subspace of} $\mathcal{B}\left( X\right) $, \emph{%
and the (unbounded)} \emph{linear operator }$m$\emph{\ from} $\mathcal{B}%
\left( X\right) $ \emph{to} $\mathbb{C}$ \emph{defined by} $D\left( m\right)
=\mathcal{B}_{\infty }\left( X\right) $ \emph{and} $m\left( u\right)
=\lim_{x\rightarrow \infty }u\left( x\right) $ \emph{for} $u\in D\left(
m\right) $, \emph{is a mean value on }$X$\emph{\ called the mean value for} $%
\mathcal{B}_{\infty }\left( X\right) $. \emph{This being so, it is} \emph{%
immediate by dominated convergence (use (\ref{eq3.3}) and Corollary \ref%
{co2.2}) that} $\mathcal{B}_{\infty }\left( X\right) \subset \Pi ^{\infty
}\left( X,\mathcal{H},\lambda \right) $ \emph{and} $M\left( u\right)
=m\left( u\right) $ \emph{for} $u\in D\left( m\right) $, \emph{and so} $M$ 
\emph{is an extension of }$m$\emph{.}
\end{example}

\begin{example}
\emph{\label{ex4.2} Take }$X=\mathbb{R}^{N}$ $\left( N\geq 1\right) $, $%
\mathcal{H=}\left( H_{\varepsilon }\right) _{\varepsilon >0}$ \emph{with} $%
H_{\varepsilon }\left( x\right) =\frac{x}{\varepsilon }$ \emph{for} $x\in 
\mathbb{R}^{N}$ \emph{(see Example \ref{ex2.4}), and }$\lambda $\emph{\ as
being Lebesgue measure on} $\mathbb{R}^{N}$. \emph{It is a classical fact
that }$H_{\varepsilon }\left( \lambda \right) =\varepsilon ^{N}\lambda $%
\emph{\ for }$\varepsilon >0$\emph{, so that the triple} $\left( \mathbb{R}%
^{N},\mathcal{H},\lambda \right) $\emph{\ is a homogenizer. Now, putting }$%
Y=\left( 0,1\right) ^{N}$\emph{, let} $\mathcal{C}_{per}\left( Y\right) $ 
\emph{be the space of those continuous}\ \emph{complex functions }$u$\emph{\
on} $\mathbb{R}^{N}$ \emph{that are }$Y$\emph{-periodic, i.e., that satisfy} 
$u\left( y+k\right) =u\left( y\right) $ \emph{for} $y\in \mathbb{R}^{N}$ 
\emph{and} $k\in \mathbb{Z}^{N}$ \emph{(}$\mathbb{Z}$ \emph{denotes the
integers).} $\mathcal{C}_{per}\left( Y\right) $ \emph{is a closed vector
subspace of} $\mathcal{B}\left( \mathbb{R}^{N}\right) $,\emph{\ and the
(unbounded) linear operator }$m$\emph{\ from }$\mathcal{B}\left( \mathbb{R}%
^{N}\right) $ \emph{to} $\mathbb{C}$ \emph{defined by} $D\left( m\right) =%
\mathcal{C}_{per}\left( Y\right) $ \emph{and} $m\left( u\right)
=\int_{Y}u\left( y\right) dy$\quad $\left( u\in \mathcal{C}_{per}\left(
Y\right) \right) $, \emph{is a mean value on} $\mathbb{R}^{N}$. \emph{%
Furthermore, it is\ well-known that} $\mathcal{C}_{per}\left( Y\right)
\subset \Pi ^{\infty }\left( \mathbb{R}^{N},\mathcal{H},\lambda \right) $ 
\emph{and} $m\left( u\right) =M\left( u\right) $ \emph{for} $u\in D\left(
m\right) $ \emph{(see \cite{bib3}).} \emph{Thus, the mean} \emph{value on} $%
\left( \mathbb{R}^{N},\mathcal{H},\lambda \right) $ \emph{extends the mean
value for }$Y$\emph{-periodic functions on }$\mathbb{R}^{N}$.
\end{example}

Thus, in all\ probability, the mean value on $\left( X,\mathcal{H},\lambda
\right) $ is a useful tool for the study of the asymptotic properties of the
action $\mathcal{H}$. To see that the mean value on $\left( X,\mathcal{H}%
,\lambda \right) $ is actually necessary for this purpose, let us fix freely 
$u\in L^{1}\left( X\right) $. According to (\ref{eq3.1}), we have%
\begin{equation*}
\int u\left( H_{\varepsilon }\left( x\right) \right) d\lambda \left(
x\right) =h\left( \varepsilon ^{-1}\right) \int u\left( x\right) d\lambda
\left( x\right) \text{,}
\end{equation*}%
hence, by (\ref{eq3.2}),%
\begin{equation*}
\lim_{\varepsilon \rightarrow \theta }\int u\left( H_{\varepsilon }\left(
x\right) \right) d\lambda \left( x\right) =0\text{.}
\end{equation*}%
More generally, for $u\in L^{p}\left( X\right) $\quad $\left( 1\leq
p<+\infty \right) $, it is immediate that

\noindent $\left\Vert u\circ H_{\varepsilon }\right\Vert _{L^{p}\left(
X\right) }\rightarrow 0$ as $\varepsilon \rightarrow \theta $. Therefore, to
have significative information about the asymptotic properties of $\mathcal{H%
}$ we had the need to introduce an alternative notion to integration, namely
the mean value on $\left( X,\mathcal{H},\lambda \right) $.

However, in order to know more about the asymptotic properties of $\mathcal{H%
}$, we need to extend $M$ beyond $\Pi ^{\infty }\left( X,\mathcal{H},\lambda
\right) $.

Let us fix freely a real $p\geq 1$. We first of all introduce the space $\Xi
^{p}\left( X,\mathcal{H},\lambda \right) $ of those\ functions $u\in
L_{loc}^{p}\left( X\right) $ for which the sequence $\left( u\circ
H_{\varepsilon }\right) _{\varepsilon \leq e}$ is bounded in $%
L_{loc}^{p}\left( X\right) $. This is a vector subspace of $%
L_{loc}^{p}\left( X\right) $. Furthermore, let $B_{\mathcal{H}}$ be an open
elementary set in $X$ (Definition \ref{def2.4}). Let 
\begin{equation}
\left\Vert u\right\Vert _{\Xi ^{p}}=\sup_{\varepsilon \leq e}\left( \int_{B_{%
\mathcal{H}}}\left\vert u\left( H_{\varepsilon }\left( x\right) \right)
\right\vert ^{p}d\lambda \left( x\right) \right) ^{\frac{1}{p}}
\label{eq4.1}
\end{equation}%
for $u\in \Xi ^{p}\left( X,\mathcal{H},\lambda \right) $, which defines a
seminorm on $\Xi ^{p}\left( X,\mathcal{H},\lambda \right) $.

\begin{lemma}
\label{lem4.1} The preceding seminorm is actually a norm under which $\Xi
^{p}\left( X,\mathcal{H},\lambda \right) $ is a Banach space.
\end{lemma}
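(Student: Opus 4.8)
The plan is to rest everything on the change-of-variables identity supplied by (\ref{eq3.1}): for every $\varepsilon\in E$ and every nonnegative $\lambda$-measurable function $g$ on $X$,
\[
\int_{B_{\mathcal{H}}}g(H_{\varepsilon}(x))\,d\lambda(x)=h(\varepsilon^{-1})\int_{H_{\varepsilon}(B_{\mathcal{H}})}g(y)\,d\lambda(y),
\]
obtained by applying (\ref{eq3.1}) to the test function $\varphi=\chi_{H_{\varepsilon}(B_{\mathcal{H}})}\,g$ (after the routine extension of (\ref{eq3.1}) from $\mathcal{K}(X)$ to nonnegative measurable functions), using $H_{\varepsilon^{-1}}(H_{\varepsilon}(B_{\mathcal{H}}))=B_{\mathcal{H}}$ and $h(\varepsilon^{-1})>0$. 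From this I would extract two facts. (a) For each positive integer $n$ the set $H_{n^{-1}}(B_{\mathcal{H}})$ is relatively compact and $\|v\circ H_{n^{-1}}\|_{L^{p}(B_{\mathcal{H}})}^{p}=h(n)\,\|v\|_{L^{p}(H_{n^{-1}}(B_{\mathcal{H}}))}^{p}$, so on that fixed set the $L^p$-norm and the $\varepsilon=n^{-1}$ slice of $\|\cdot\|_{\Xi^{p}}$ differ only by the fixed positive constant $h(n)^{1/p}$. (b) Since $n^{-1}\leq e$ forces $\varepsilon n^{-1}\leq e$ whenever $\varepsilon\leq e$, the same identity gives $\|u\circ H_{\varepsilon}\|_{L^{p}(H_{n^{-1}}(B_{\mathcal{H}}))}^{p}=h(n^{-1})\,\|u\circ H_{\varepsilon n^{-1}}\|_{L^{p}(B_{\mathcal{H}})}^{p}\leq h(n^{-1})\,\|u\|_{\Xi^{p}}^{p}$. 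Because every compact $K\subset X$ lies inside some $H_{n^{-1}}(B_{\mathcal{H}})$ and these sets increase with $n$ (Corollary \ref{co2.3} and part (iii) of Proposition \ref{pr2.3}), (b) shows that $\|u\|_{\Xi^{p}}<\infty$ is equivalent to $u\in\Xi^{p}(X,\mathcal{H},\lambda)$ and that $\|\cdot\|_{\Xi^{p}}$ dominates every local $L^{p}$-seminorm of the family $(u\circ H_{\varepsilon})_{\varepsilon\leq e}$.

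Granting (a)--(b), the fact that $\|\cdot\|_{\Xi^{p}}$ is a norm is immediate: if $\|u\|_{\Xi^{p}}=0$ then $u\circ H_{n^{-1}}=0$ $\lambda$-a.e.\ on $B_{\mathcal{H}}$ for all $n$, hence by the identity $u=0$ $\lambda$-a.e.\ on $H_{n^{-1}}(B_{\mathcal{H}})$, and as these sets cover $X$ (part (iii) of Proposition \ref{pr2.3}) we get $u=0$ in $L_{loc}^{p}(X)$; absolute homogeneity and the triangle inequality follow from Minkowski's inequality in $L^{p}(B_{\mathcal{H}})$ applied for each $\varepsilon\leq e$ and then taking the supremum.

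For completeness, let $(u_{k})$ be Cauchy in $\Xi^{p}(X,\mathcal{H},\lambda)$. Fixing $n$, fact (a) shows $(u_{k})$ is Cauchy in $L^{p}(H_{n^{-1}}(B_{\mathcal{H}}))$ (the constant $h(n)^{-1/p}$ being harmless for fixed $n$), so it converges there to some $v_{n}$; the monotonicity of $\{H_{n^{-1}}(B_{\mathcal{H}})\}$ makes the $v_{n}$ compatible, and they glue to a function $u\in L_{loc}^{p}(X)$ on $X=\bigcup_{n}H_{n^{-1}}(B_{\mathcal{H}})$ with $u_{k}\to u$ in $L_{loc}^{p}(X)$. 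For each fixed $\varepsilon\leq e$, compactness of $\overline{H_{\varepsilon}(B_{\mathcal{H}})}$ gives $u_{k}\to u$ in $L^{p}(\overline{H_{\varepsilon}(B_{\mathcal{H}})})$, which via the identity becomes $u_{k}\circ H_{\varepsilon}\to u\circ H_{\varepsilon}$ in $L^{p}(B_{\mathcal{H}})$; hence $\|u\circ H_{\varepsilon}\|_{L^{p}(B_{\mathcal{H}})}=\lim_{k}\|u_{k}\circ H_{\varepsilon}\|_{L^{p}(B_{\mathcal{H}})}\leq\sup_{k}\|u_{k}\|_{\Xi^{p}}<\infty$, and taking the supremum over $\varepsilon\leq e$ puts $u$ in $\Xi^{p}(X,\mathcal{H},\lambda)$ by the remark following (b). Finally, for $k,l$ large and $\varepsilon\leq e$ we have $\|(u_{k}-u_{l})\circ H_{\varepsilon}\|_{L^{p}(B_{\mathcal{H}})}\leq\|u_{k}-u_{l}\|_{\Xi^{p}}$; letting $l\to\infty$ (using the convergence just proved) and taking the supremum over $\varepsilon\leq e$ yields $\|u_{k}-u\|_{\Xi^{p}}\leq\sup_{l\ \text{large}}\|u_{k}-u_{l}\|_{\Xi^{p}}\to 0$.

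I expect the only real obstacle to be step (b): making precise that the single test set $B_{\mathcal{H}}$ genuinely controls the full $L_{loc}^{p}$ behaviour of $(u\circ H_{\varepsilon})_{\varepsilon\leq e}$. This is exactly where the covering and monotonicity properties of $\{H_{n^{-1}}(B_{\mathcal{H}})\}$ from part (iii) of Proposition \ref{pr2.3} and Corollary \ref{co2.3} must be married to the homogeneity relation (\ref{eq3.1}); once that is secured, the completeness argument is the standard exhaustion-and-glue routine.
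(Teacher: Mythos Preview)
Your argument is correct and rests on the same core computation as the paper's proof: your fact (b) is exactly inequality (\ref{eq4.2}) there, obtained by the same change of variables via (\ref{eq3.1}) together with Corollary~\ref{co2.3} and the monotonicity of the sets $H_{n^{-1}}(B_{\mathcal{H}})$.

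Where the two proofs diverge is in organization. The paper first introduces the whole family of seminorms $q_{K}(u)=\sup_{\varepsilon\le e}\|u\circ H_{\varepsilon}\|_{L^{p}(K)}$, observes that the resulting Fr\'echet topology on $\Xi^{p}(X,\mathcal{H},\lambda)$ is complete by delegating to the completeness of $L^{p}_{loc}(X)$ and the continuity of $u\mapsto u\circ H_{\varepsilon}$, and only then proves that this Fr\'echet topology coincides with the one given by the single seminorm $\|\cdot\|_{\Xi^{p}}$ (which, being Hausdorff and equivalent to a complete metrizable topology, is automatically a complete norm). You instead bypass the Fr\'echet detour entirely: you verify definiteness directly from the exhaustion $X=\bigcup_{n}H_{n^{-1}}(B_{\mathcal{H}})$, and you run the Cauchy-sequence/gluing argument by hand. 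Your route is more elementary and self-contained; the paper's route is shorter once one is willing to invoke the abstract completeness of $L^{p}_{loc}$ and the ``equivalent topologies $\Rightarrow$ same completeness'' principle. Either way, the substantive step is your (b)/the paper's (\ref{eq4.2}), and you have identified it correctly.
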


\begin{proof}
For each compact set $K\subset X$, put 
\begin{equation*}
q_{K}\left( u\right) =\sup_{\varepsilon \leq e}\left( \int_{K}\left\vert
u\left( H_{\varepsilon }\left( x\right) \right) \right\vert ^{p}d\lambda
\left( x\right) \right) ^{\frac{1}{p}}\qquad \quad \left( u\in \Xi
^{p}\left( X,\mathcal{H},\lambda \right) \right) \text{,}
\end{equation*}%
which gives a family of seminorms $\left\{ q_{K}\right\} $ ($K$ ranging over
the compact sets in $X$) on $\Xi ^{p}\left( X,\mathcal{H},\lambda \right) $.
The natural topology on $\Xi ^{p}\left( X,\mathcal{H},\lambda \right) $ is
that defined by this family of seminorms. So topologized, $\Xi ^{p}\left( X,%
\mathcal{H},\lambda \right) $ is a separated locally convex space. Moreover,
because $X$ is $\sigma $-compact, this topology may be defined by a
countable family of seminorms as above, so that there is a countable base of
neighbourhoods at the origin in $\Xi ^{p}\left( X,\mathcal{H},\lambda
\right) $. The result is that the proof of the completeness of $\Xi
^{p}\left( X,\mathcal{H},\lambda \right) $ is equivalent to that of the
sequential completeness. But then the sequential completeness of $\Xi
^{p}\left( X,\mathcal{H},\lambda \right) $ follows by a classical method
using the facts that $L_{loc}^{p}\left( X\right) $ is complete, $\Xi
^{p}\left( X,\mathcal{H},\lambda \right) $ is continuously embedded in $%
L_{loc}^{p}\left( X\right) $, and for fixed $\varepsilon \in E$, the
transformation $u\rightarrow u\circ H_{\varepsilon }$\ maps continuously $%
L_{loc}^{p}\left( X\right) $ into itself. Thus, the lemma is proved if we
can check that the (natural) topology on $\Xi ^{p}\left( X,\mathcal{H}%
,\lambda \right) $ is equivalent to that defined by the seminorm in (\ref%
{eq4.1}), which seminorm will then turn out to be a complete norm. However,
a little moment of reflexion reveals that the whole problem reduces to
showing that to each compact set $K\subset X$ corresponds some constant $%
c=c\left( K,\mathcal{H}\right) >0$ such that%
\begin{equation}
\left( \int_{K}\left\vert u\left( H_{\varepsilon }\left( x\right) \right)
\right\vert ^{p}d\lambda \left( x\right) \right) ^{\frac{1}{p}}\leq
c\left\Vert u\right\Vert _{\Xi ^{p}}  \label{eq4.2}
\end{equation}%
for all $u\in \Xi ^{p}\left( X,\mathcal{H},\lambda \right) $ and all $%
\varepsilon \in E$ with $\varepsilon \leq e$. So, let $K$ be a compact set
in $X$. Based on Corollary \ref{co2.3} and recalling that the sequence $%
\left\{ H_{n^{-1}}\left( B_{\mathcal{H}}\right) \right\} $ ($n$ ranging over
the positive integers) is increasing (see part (i) of Proposition \ref{pr2.1}%
), we can choose some integer $n\geq e$ such that $K\subset H_{n^{-1}}\left(
B_{\mathcal{H}}\right) $. This being so, let $\varepsilon \in E$, $%
\varepsilon \leq e$, and $u\in \Xi ^{p}\left( X,\mathcal{H},\lambda \right) $
be fixed in an arbitrary manner. Then%
\begin{equation*}
\int_{K}\left\vert u\left( H_{\varepsilon }\left( x\right) \right)
\right\vert ^{p}d\lambda \left( x\right) \leq \int_{H_{n^{-1}}\left( B_{%
\mathcal{H}}\right) }\left\vert u\left( H_{\varepsilon }\left( x\right)
\right) \right\vert ^{p}d\lambda \left( x\right) \text{.}
\end{equation*}%
In the sequel $\mathcal{\chi }_{A}$ denotes the characteristic function of a
set $A$ in $X$, and for convenience $f$ stands for the function in $%
L_{loc}^{1}\left( X\right) $ which is given by $f\left( x\right) =\left\vert
u\left( H_{n^{-1}\varepsilon }\left( x\right) \right) \right\vert ^{p}$\quad 
$\left( x\in X\right) $. Thus%
\begin{equation*}
\int_{H_{n^{-1}}\left( B_{\mathcal{H}}\right) }\left\vert u\left(
H_{\varepsilon }\left( x\right) \right) \right\vert ^{p}d\lambda \left(
x\right) =\int_{H_{n^{-1}}\left( B_{\mathcal{H}}\right) }f\left( H_{n}\left(
x\right) \right) d\lambda \left( x\right)
\end{equation*}%
\begin{equation*}
\qquad \qquad \qquad \qquad \qquad \qquad \qquad =\int f\left( H_{n}\left(
x\right) \right) \mathcal{\chi }_{H_{n^{-1}}\left( B_{\mathcal{H}}\right)
}\left( x\right) d\lambda \left( x\right)
\end{equation*}%
\begin{equation*}
\qquad \qquad \qquad \qquad \qquad \qquad =\int \left( f\mathcal{\chi }_{B_{%
\mathcal{H}}}\right) \left( H_{n}\left( x\right) \right) d\lambda \left(
x\right)
\end{equation*}%
\begin{equation*}
\qquad \qquad \qquad \qquad \qquad \qquad \qquad =h\left( n^{-1}\right) \int
f\left( x\right) \mathcal{\chi }_{B_{\mathcal{H}}}\left( x\right) d\lambda
\left( x\right)
\end{equation*}%
\begin{equation*}
\qquad \qquad \qquad \qquad \qquad \qquad =h\left( n^{-1}\right) \int_{B_{%
\mathcal{H}}}f\left( x\right) d\lambda \left( x\right)
\end{equation*}%
\begin{equation*}
\qquad \qquad \qquad \qquad \qquad \qquad \qquad \qquad =h\left(
n^{-1}\right) \int_{B_{\mathcal{H}}}\left\vert u\left( H_{n^{-1}\varepsilon
}\left( x\right) \right) \right\vert ^{p}d\lambda \left( x\right) \text{,}
\end{equation*}%
where we have made use of the equality $\mathcal{\chi }_{H_{n^{-1}}\left( B_{%
\mathcal{H}}\right) }=\mathcal{\chi }_{B_{\mathcal{H}}}\circ H_{n}$. Hence (%
\ref{eq4.2}) follows by letting $c=\left[ h\left( n^{-1}\right) \right] ^{%
\frac{1}{p}}$ and observing that $n^{-1}\varepsilon \leq e$.
\end{proof}

Now, for real $p\geq 1$, we define $\mathfrak{X}^{p}\left( X,\mathcal{H}%
,\lambda \right) $ to be the closure of $\Pi ^{\infty }\left( X,\mathcal{H}%
,\lambda \right) $ in $\Xi ^{p}\left( X,\mathcal{H},\lambda \right) $.
Provided with the $\Xi ^{p}\left( X,\mathcal{H},\lambda \right) $-norm, $%
\mathfrak{X}^{p}\left( X,\mathcal{H},\lambda \right) $ is a Banach space,
thanks to Lemma \ref{lem4.1}.

Having made this point, one can easily show the next result.

\begin{proposition}
\label{pr4.1} The mean value on $\left( X,\mathcal{H},\lambda \right) $
extends by continuity to a (unique) continuous linear form on $\mathfrak{X}%
^{p}\left( X,\mathcal{H},\lambda \right) $ still called $M$. Furthermore,
given $u\in \mathfrak{X}^{p}\left( X,\mathcal{H},\lambda \right) $ and a
fixed relatively compact open set $\Omega $ in $X$, we have $u\circ
H_{\varepsilon }\rightarrow M\left( u\right) $ in $L^{p}\left( \Omega
\right) $-weak as $\varepsilon \rightarrow \theta $, where $u\circ
H_{\varepsilon }$ is considered as defined on $\Omega $.
\end{proposition}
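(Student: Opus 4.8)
The plan is to prove continuity of $M$ on the dense subspace $\Pi^{\infty}(X,\mathcal{H},\lambda)$ first, then extend $M$ by density, and finally upgrade the $L^{\infty}$-weak$\ast$ convergence available on $\Pi^{\infty}(X,\mathcal{H},\lambda)$ to $L^{p}(\Omega)$-weak convergence on all of $\mathfrak{X}^{p}(X,\mathcal{H},\lambda)$ by a routine three-term estimate.

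First I would fix $u\in\Pi^{\infty}(X,\mathcal{H},\lambda)$ and test the weak$\ast$ convergence $u\circ H_{\varepsilon}\to M(u)$ in $L^{\infty}(X)$ against the characteristic function of the open elementary set $B_{\mathcal{H}}$ entering the norm (\ref{eq4.1}); this function lies in $L^{1}(X)$ because $B_{\mathcal{H}}$ is relatively compact, whence $\int_{B_{\mathcal{H}}}u(H_{\varepsilon}(x))\,d\lambda(x)\to M(u)\lambda(B_{\mathcal{H}})$ as $\varepsilon\to\theta$. Combining this with the bound $\bigl|\int_{B_{\mathcal{H}}}u(H_{\varepsilon}(x))\,d\lambda(x)\bigr|\le\lambda(B_{\mathcal{H}})^{1-1/p}\|u\|_{\Xi^{p}}$ (H\"older's inequality; for $p=1$ the factor is $1$), valid for every $\varepsilon\le e$, and recalling that $\lambda(B_{\mathcal{H}})>0$ --- otherwise the seminorm (\ref{eq4.1}) would vanish identically, contradicting Lemma \ref{lem4.1} --- I would get $|M(u)|\le\lambda(B_{\mathcal{H}})^{-1/p}\|u\|_{\Xi^{p}}$. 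Since $\Pi^{\infty}(X,\mathcal{H},\lambda)$ is by construction dense in $\mathfrak{X}^{p}(X,\mathcal{H},\lambda)$ and $M$ is linear and bounded on it for the $\Xi^{p}$-norm, it extends uniquely to a continuous linear form on $\mathfrak{X}^{p}(X,\mathcal{H},\lambda)$, still denoted $M$, satisfying the same estimate.

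For the convergence statement, fix $u\in\mathfrak{X}^{p}(X,\mathcal{H},\lambda)$ and a relatively compact open set $\Omega\subset X$. Since $\overline{\Omega}$ is compact, $\lambda(\Omega)<+\infty$ and inequality (\ref{eq4.2}) of Lemma \ref{lem4.1}, applied with $K=\overline{\Omega}$, provides a constant $c=c(\overline{\Omega},\mathcal{H})$ with $\|v\circ H_{\varepsilon}\|_{L^{p}(\Omega)}\le c\|v\|_{\Xi^{p}}$ for all $v\in\Xi^{p}(X,\mathcal{H},\lambda)$ and all $\varepsilon\le e$; in particular $(u\circ H_{\varepsilon})_{\varepsilon\le e}$ is bounded in $L^{p}(\Omega)$ and $M(u)$ is a genuine element of $L^{p}(\Omega)$. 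Next I would record that the conclusion is immediate for $u\in\Pi^{\infty}(X,\mathcal{H},\lambda)$: any $g\in L^{p'}(\Omega)$ (with $p'$ the conjugate exponent, $g\in L^{\infty}(\Omega)$ if $p=1$), extended by $0$ outside $\Omega$, belongs to $L^{1}(X)$, so the $L^{\infty}(X)$-weak$\ast$ convergence $u\circ H_{\varepsilon}\to M(u)$ yields $\int_{\Omega}u(H_{\varepsilon}(x))g(x)\,d\lambda(x)\to M(u)\int_{\Omega}g\,d\lambda$.

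Finally, for general $u\in\mathfrak{X}^{p}(X,\mathcal{H},\lambda)$ I would pick $u_{k}\in\Pi^{\infty}(X,\mathcal{H},\lambda)$ with $\|u_{k}-u\|_{\Xi^{p}}\to0$ and split
\begin{equation*}
\Bigl|\int_{\Omega}(u\circ H_{\varepsilon})g\,d\lambda-M(u)\int_{\Omega}g\,d\lambda\Bigr|\le A_{k}(\varepsilon)+B_{k}(\varepsilon)+C_{k},
\end{equation*}
where $A_{k}(\varepsilon)=\bigl|\int_{\Omega}((u-u_{k})\circ H_{\varepsilon})g\,d\lambda\bigr|$, $B_{k}(\varepsilon)=\bigl|\int_{\Omega}(u_{k}\circ H_{\varepsilon})g\,d\lambda-M(u_{k})\int_{\Omega}g\,d\lambda\bigr|$ and $C_{k}=|M(u_{k})-M(u)|\,\bigl|\int_{\Omega}g\,d\lambda\bigr|$. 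By H\"older together with (\ref{eq4.2}), $A_{k}(\varepsilon)\le c\|u-u_{k}\|_{\Xi^{p}}\|g\|_{L^{p'}(\Omega)}$ uniformly in $\varepsilon\le e$, and by the continuity bound for $M$, $C_{k}\le\lambda(B_{\mathcal{H}})^{-1/p}\|u-u_{k}\|_{\Xi^{p}}\|g\|_{L^{1}(\Omega)}$; both go to $0$ as $k\to\infty$. Given $\eta>0$, I would first fix $k$ so that $A_{k}(\varepsilon)+C_{k}<\eta/2$ for all $\varepsilon\le e$, and then invoke the $\Pi^{\infty}$-case to find $\alpha\in E$, $\alpha\le e$, with $B_{k}(\varepsilon)<\eta/2$ for $\varepsilon\le\alpha$, so that the left-hand side is $<\eta$ for $\varepsilon\le\alpha$. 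This proves $u\circ H_{\varepsilon}\to M(u)$ in $L^{p}(\Omega)$-weak as $\varepsilon\to\theta$. I expect the only real obstacle to be the first step, i.e.\ pinning down the $\Xi^{p}$-continuity of $M$ on $\Pi^{\infty}(X,\mathcal{H},\lambda)$; once that estimate is secured, the rest is a standard density and $3\eta$ argument.
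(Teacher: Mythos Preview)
Your proof is correct and follows essentially the same approach as the paper: the continuity estimate $|M(u)|\le\lambda(B_{\mathcal{H}})^{-1/p}\|u\|_{\Xi^{p}}$ via H\"older on $B_{\mathcal{H}}$ and passage to the limit is exactly the paper's argument, and your detailed three-term density argument for the weak $L^{p}(\Omega)$ convergence is precisely the ``mere routine'' that the paper alludes to by citing an external reference. Your explicit remark that $\lambda(B_{\mathcal{H}})>0$ (needed to divide) is a point the paper leaves tacit.
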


\begin{proof}
For $\psi \in \Pi ^{\infty }\left( X,\mathcal{H},\lambda \right) $, we have 
\begin{equation*}
\left\vert \int_{B_{\mathcal{H}}}\psi \left( H_{\varepsilon }\left( x\right)
\right) d\lambda \left( x\right) \right\vert \leq \lambda \left( B_{\mathcal{%
H}}\right) ^{\frac{1}{p^{\prime }}}\left\Vert \psi \right\Vert _{\Xi
^{p}}\qquad \left( \varepsilon \leq e\right) \text{,}
\end{equation*}%
where $p^{\prime }=\frac{p}{p-1}$. Letting $\varepsilon \rightarrow \theta $%
, this gives $\left\vert M\left( \psi \right) \right\vert \leq \lambda
\left( B_{\mathcal{H}}\right) ^{-\frac{1}{p}}\left\Vert \psi \right\Vert
_{\Xi ^{p}}$. Hence the first part of the proposition follows by extension
by continuity. The second part can be easily deduced from this by mere
routine (see, e.g., the proof of \cite{bib21}, Proposition \ref{pr2.5}).
\end{proof}

\begin{remark}
\emph{\label{rem4.1} We have} $L^{p}\left( X\right) \subset \mathfrak{X}%
^{p}\left( X,\mathcal{H},\lambda \right) $ \emph{with continuous embedding
and further }$M\left( u\right) =0$\emph{\ for }$u\in L^{p}\left( X\right) $%
\emph{.}
\end{remark}

We will later need to know about the behaviour of $u\circ H_{\varepsilon }$
(as $\varepsilon \rightarrow \theta $) for $u\in L^{p}\left( \Omega ;\Pi
^{\infty }\right) $, where $\Pi ^{\infty }=\Pi ^{\infty }\left( X,\mathcal{H}%
,\lambda \right) $ and $\Omega $ is an open set in $X$.

The character $\Omega $ throughout denotes a nonempty open set in $X$ (the
case $\Omega =X$ being included). In the case $\Omega \neq X$, $\Omega $ is
provided with the relative topology on $X$ and with the induced measure $%
\lambda _{\Omega }=\lambda \mathfrak{\mid }_{\Omega }$.

Now, let $\varepsilon \in E$ be fixed. For $u\in L_{loc}^{1}\left( \Omega
\times X\right) $, we put 
\begin{equation}
u^{\varepsilon }\left( x\right) =u\left( x,H_{\varepsilon }\left( x\right)
\right) \qquad \left( x\in \Omega \right)  \label{eq4.3}
\end{equation}%
whenever the right-hand side makes sense. On this point it is worth noting
that the set $D\left( \varepsilon \right) =\left\{ \left( x,H_{\varepsilon
}\left( x\right) \right) :x\in \Omega \right\} \subset X\times X$ is
negligible for the product measure $\lambda \otimes \lambda $ (see \cite%
{bib18}, Section 4), so that, except in obvious cases such as that in which $%
u$ is a continuous function on $\Omega \times X$, it is in general a
delicate matter to give a meaning to the trace $u\mathfrak{\mid }_{D\left(
\varepsilon \right) }$ of $u\in L_{loc}^{1}\left( \Omega \times X\right) $.
The least obvious case that concerns us is when $u$ lies in $L^{p}\left(
\Omega ;\Pi ^{\infty }\right) $.

More generally, let $A$ be a closed vector subspace of $\mathcal{B}\left(
X\right) $, $A$ being equipped with the supremum norm. Let $1\leq p\leq
+\infty $ be arbitrarily fixed. For $u\in L^{p}\left( \Omega ;A\right) $,
let $\mathcal{N}\subset \Omega $ be a $\lambda $-negligible set such that
for each fixed $x\in \Omega \backslash \mathcal{N}$, the map $y\rightarrow
u\left( x,y\right) $ of $X$ into $\mathbb{C}$ lies in $A$. We may then
consider the mapping $z\rightarrow u\left( x,H_{\varepsilon }\left( z\right)
\right) $ which sends continuously $X$ into $\mathbb{C}$. Hence the complex
number $u\left( x,H_{\varepsilon }\left( x\right) \right) $ is well defined
for any $x\in \Omega \backslash \mathcal{N}$. This yields a function $%
x\rightarrow u^{\varepsilon }\left( x\right) $ from $\Omega $ to $\mathbb{C}$
defined (almost everywhere in $\Omega $) by (\ref{eq4.3}). Let us make this
more precise.

\begin{lemma}
\label{lem4.2} Let $1\leq p\leq +\infty $. For each $u\in L^{p}\left( \Omega
;A\right) $, we have $u^{\varepsilon }\in L^{p}\left( \Omega \right) $ and
further the transformation $u\rightarrow u^{\varepsilon }$ maps linearly and
continuously $L^{p}\left( \Omega ;A\right) $ into $L^{p}\left( \Omega
\right) $ with 
\begin{equation}
\left\Vert u^{\varepsilon }\right\Vert _{L^{p}\left( \Omega \right) }\leq
\left\Vert u\right\Vert _{L^{p}\left( \Omega ;A\right) }\qquad \qquad \left(
u\in L^{p}\left( \Omega ;A\right) \right) \text{.}  \label{eq4.4}
\end{equation}
\end{lemma}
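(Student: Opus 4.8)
The plan is to make everything rest on the elementary pointwise estimate
\[
\left\vert u^{\varepsilon }(x)\right\vert \leq \sup_{y\in X}\left\vert u(x,y)\right\vert \qquad \text{for }\lambda \text{-a.e. }x\in \Omega ,
\]
which is immediate from the definition of $u^{\varepsilon }$: for $x\in \Omega \backslash \mathcal{N}$ the slice $y\mapsto u(x,y)$ belongs to $A\subset \mathcal{B}(X)$, and $H_{\varepsilon }(x)$ is a point of $X$, so $u(x,H_{\varepsilon }(x))$ is bounded in modulus by the supremum norm $\left\Vert u(x,\cdot )\right\Vert _{A}$ of that slice. Now $x\mapsto \left\Vert u(x,\cdot )\right\Vert _{A}$ is, by the very definition of the Bochner norm, a function of $L^{p}(\Omega )$ whose $L^{p}(\Omega )$-norm equals $\left\Vert u\right\Vert _{L^{p}(\Omega ;A)}$; hence, once the $\lambda $-measurability of $u^{\varepsilon }$ is established, this single inequality yields simultaneously $u^{\varepsilon }\in L^{p}(\Omega )$ and the bound (\ref{eq4.4}). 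Linearity of $u\mapsto u^{\varepsilon }$ is obvious, since $(u+v)^{\varepsilon }=u^{\varepsilon }+v^{\varepsilon }$ and $(cu)^{\varepsilon }=c\,u^{\varepsilon }$ hold $\lambda $-almost everywhere on $\Omega $, and continuity of the linear map $u\mapsto u^{\varepsilon }$ is then an immediate consequence of (\ref{eq4.4}).

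The only genuine point is therefore the $\lambda $-measurability of $u^{\varepsilon }$. First I would record that the evaluation map $A\times X\rightarrow \mathbb{C}$, $(f,z)\mapsto f(z)$, is jointly continuous: if $f_{n}\rightarrow f$ in $A$ and $z_{n}\rightarrow z$ in $X$, then $\left\vert f_{n}(z_{n})-f(z)\right\vert \leq \left\Vert f_{n}-f\right\Vert _{\infty }+\left\vert f(z_{n})-f(z)\right\vert \rightarrow 0$, the last term tending to $0$ because $f$ is continuous on $X$. Now, $u$ being an element of $L^{p}(\Omega ;A)$, on any compact set $K\subset \Omega $ one may, after deleting a subset of arbitrarily small $\lambda $-measure, assume that $x\mapsto u(x,\cdot )$ is continuous from the remaining compact set into $(A,\left\Vert \cdot \right\Vert _{\infty })$; composing this with the continuous map $x\mapsto H_{\varepsilon }(x)$ into $X$ and with the evaluation map just described shows that $x\mapsto u^{\varepsilon }(x)$ is continuous on that subset of $K$. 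Since $\Omega $ is $\sigma $-compact (Corollary \ref{co2.1}), this gives the $\lambda $-measurability of $u^{\varepsilon }$ on $\Omega $. For $p<+\infty $ one could argue alternatively by density, approximating $u$ in $L^{p}(\Omega ;A)$ by elements of $\mathcal{K}(\Omega )\otimes A$, whose associated functions $w^{\varepsilon }$ are continuous on $\Omega $ by the same joint-continuity fact, and passing to an almost-everywhere convergent subsequence by means of the pointwise estimate; the case $p=+\infty $ then follows since measurability is a local property and $L^{\infty }(\Omega ;A)\subset L_{loc}^{q}(\Omega ;A)$ for every finite $q$.

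Assembling the two steps: $u^{\varepsilon }$ is $\lambda $-measurable and dominated by the function $x\mapsto \left\Vert u(x,\cdot )\right\Vert _{A}$ of $L^{p}(\Omega )$, whence $u^{\varepsilon }\in L^{p}(\Omega )$ with $\left\Vert u^{\varepsilon }\right\Vert _{L^{p}(\Omega )}\leq \left\Vert u\right\Vert _{L^{p}(\Omega ;A)}$; together with the evident linearity this proves the lemma. I expect the measurability step to be the main (if modest) obstacle, the delicate bookkeeping being that the negligible set $\mathcal{N}$ outside which the slices $u(x,\cdot )$ genuinely lie in $A$ can be chosen independently of $\varepsilon $ (it can), and that one invokes the Lusin-type "continuity on large compact subsets" description of $\mu $-measurability proper to the integration framework of \cite{bib4}, rather than a purely sequential one, so that the joint continuity of evaluation applies verbatim.
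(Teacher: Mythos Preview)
Your proof is correct and follows essentially the same route as the paper's own proof: Lusin's characterization of $\lambda$-measurability (continuity on a large compact subset $K'$ of any given compact $K\subset\Omega$) for the measurability of $u^{\varepsilon}$, followed by the pointwise domination $\left\vert u^{\varepsilon}(x)\right\vert\leq\left\Vert u(x,\cdot)\right\Vert_{\infty}$ to conclude $u^{\varepsilon}\in L^{p}(\Omega)$ and obtain (\ref{eq4.4}). The only cosmetic difference is that you isolate the joint continuity of the evaluation map $A\times X\rightarrow\mathbb{C}$ as a separate observation and then compose, whereas the paper carries out the same estimate directly via an $\alpha/2$ splitting at a fixed point $a\in K'$; the two arguments are interchangeable, and your alternative density argument for $p<+\infty$ is a legitimate variant that the paper does not mention.
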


\begin{proof}
Let $u\in L^{p}\left( \Omega ;A\right) $. The first point is to check that $%
u^{\varepsilon }$ is measurable (for $\lambda $). For this purpose fix
freely some compact set $K\subset \Omega $ and some real $\eta >0$. Since $u$
is measurable from $\Omega $ to $A$, there is some compact set $K^{\prime
}\subset K$ such that $\lambda \left( K\backslash K^{\prime }\right) \leq
\eta $ and further $u\mathfrak{\mid }_{K^{\prime }}$ (the restriction of $u$
to $K^{\prime }$) maps continuously $K^{\prime }$ into $A$. Clearly the
measurability of $u^{\varepsilon }$ will have been established if we can
verify that $u^{\varepsilon }$ is continuous on $K^{\prime }$. To show this,
let $a\in K^{\prime }$. Fix freely $\alpha >0$. Considering that the
function $x\rightarrow u\left( a,H_{\varepsilon }\left( x\right) \right) $
is continuous on $\Omega $ and in particular at $x=a$, we can choose a
neighbourhood $U$ of $a$ in $\Omega $ such that $\left\vert u\left(
a,H_{\varepsilon }\left( x\right) \right) -u\left( a,H_{\varepsilon }\left(
a\right) \right) \right\vert \leq \frac{\alpha }{2}$ for all $x\in U$. On
the other hand, it is clear that there exists a neighbourhood $V$ of $a$ in $%
\Omega $ such that $\left\Vert u\left( x,.\right) -u\left( a,.\right)
\right\Vert _{\infty }\leq \frac{\alpha }{2}$ for all $x\in V\cap K^{\prime
} $. Hence $\left\vert u^{\varepsilon }\left( x\right) -u^{\varepsilon
}\left( a\right) \right\vert \leq \alpha $ for all $x\in U\cap V\cap
K^{\prime }$, and the measurability of $u^{\varepsilon }$\ follows thereby.
Finally, we evidently have $\left\vert u\left( x,y\right) \right\vert \leq
\left\Vert u\left( x,.\right) \right\Vert _{\infty }$ for all $x\in \Omega
\backslash \mathcal{N}$ and all $y\in X$. It follows that $\left\vert
u^{\varepsilon }\left( x\right) \right\vert \leq \left\Vert u\left(
x,.\right) \right\Vert _{\infty }$ for all $x\in \Omega \backslash \mathcal{N%
}$. Recalling that $u^{\varepsilon }$ is measurable, we deduce that $%
u^{\varepsilon }$ lies in $L^{p}\left( \Omega \right) $ and further (\ref%
{eq4.4}) holds. The linearity of the transformation $u\rightarrow
u^{\varepsilon }$ being evident, the proof is complete.
\end{proof}

We are now in a position to investigate the desired asymptotic properties.
In the sequel $A$ denotes a closed vector subspace of $\Pi ^{\infty }=\Pi
^{\infty }\left( X,\mathcal{H},\lambda \right) $ (the case $A=\Pi ^{\infty }$
included), $A$ being provided with the supremum norm. It is worth recalling
that $\mathcal{K}\left( \Omega ;A\right) $ denotes the space of continuous
functions of $\Omega $ into $A$ with compact supports. For $u\in L^{p}\left(
\Omega ;A\right) $, $1\leq p\leq +\infty $, we put $\widetilde{u}\left(
x\right) =M\left( u\left( x\right) \right) $, $x\in \Omega $, which gives a
function $\widetilde{u}\in L^{p}\left( \Omega \right) $. If in particular $%
u\in \mathcal{K}\left( \Omega ;A\right) $, then $\widetilde{u}\in \mathcal{K}%
\left( \Omega \right) $.

\begin{proposition}
\label{pr4.2} The following assertions are true for any closed vector
subspace $A$ of $\Pi ^{\infty }\left( X,\mathcal{H},\lambda \right) $:

(i) For $u\in \mathcal{K}\left( \Omega ;A\right) $, we have $u^{\varepsilon
}\rightarrow \widetilde{u}$ in $L^{\infty }\left( \Omega \right) $-weak $%
\ast $ as $\varepsilon \rightarrow \theta $.

(ii) For $u\in L^{p}\left( \Omega ;A\right) $ $\left( 1\leq p<+\infty
\right) $, we have $u^{\varepsilon }\rightarrow \widetilde{u}$ in $%
L^{p}\left( \Omega \right) $-weak as $\varepsilon \rightarrow \theta $.
\end{proposition}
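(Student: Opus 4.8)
The plan is to prove (i) first and then deduce (ii) from it by a density argument together with the uniform bound of Lemma \ref{lem4.2}.

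For (i), I fix $u\in\mathcal{K}(\Omega;A)$ and must show $\int_{\Omega}u^{\varepsilon}g\,d\lambda\to\int_{\Omega}\widetilde{u}\,g\,d\lambda$ as $\varepsilon\to\theta$ for every $g\in L^{1}(\Omega)$. The first move is to reduce to elementary tensors: by the classical density of $\mathcal{K}(\Omega)\otimes A$ in $\mathcal{K}(\Omega;A)$ — obtained from a partition of unity subordinate to a finite open cover of $\mathrm{supp}\,u$ on which $x\mapsto u(x,\cdot)$ oscillates by less than a prescribed $\eta>0$ — I can write $u=u_{0}+(u-u_{0})$ with $u_{0}=\sum_{i=1}^{k}\varphi_{i}\otimes v_{i}$, $\varphi_{i}\in\mathcal{K}(\Omega)$, $v_{i}\in A$, where $\sup_{x\in\Omega}\|u(x,\cdot)-u_{0}(x,\cdot)\|_{\infty}\le\eta$ and all the supports lie in one compact set $K\subset\Omega$ that also contains $\mathrm{supp}\,u$. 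Using $|M(w)|\le\|w\|_{\infty}$ for $w\in\Pi^{\infty}$ (true for any mean value), the functions $u^{\varepsilon}-u_{0}^{\varepsilon}$ and $\widetilde{u}-\widetilde{u_{0}}$ are bounded by $\eta$ and supported in $K$, so their integrals against $g$ are at most $\eta\|g\|_{L^{1}(\Omega)}$ each. For the tensor itself I have $u_{0}^{\varepsilon}(x)=\sum_{i}\varphi_{i}(x)\,v_{i}(H_{\varepsilon}(x))$ and $\widetilde{u_{0}}(x)=\sum_{i}\varphi_{i}(x)\,M(v_{i})$; since each $v_{i}\in A\subset\Pi^{\infty}$ we have $v_{i}\circ H_{\varepsilon}\to M(v_{i})$ in $L^{\infty}(X)$-weak$\ast$, and testing against $\varphi_{i}g$ (which, extended by $0$, belongs to $L^{1}(X)$) gives $\int_{\Omega}u_{0}^{\varepsilon}g\,d\lambda\to\int_{\Omega}\widetilde{u_{0}}\,g\,d\lambda$. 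Adding the three contributions and letting $\eta\to0$ proves (i).

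For (ii), I fix $1\le p<+\infty$, $u\in L^{p}(\Omega;A)$ and $g\in L^{p'}(\Omega)$ with $p'=p/(p-1)$, and I use the density of $\mathcal{K}(\Omega;A)$ in $L^{p}(\Omega;A)$ to choose $v\in\mathcal{K}(\Omega;A)$ with $\|u-v\|_{L^{p}(\Omega;A)}\le\eta$. Applying (\ref{eq4.4}) to $u-v$ gives $\|u^{\varepsilon}-v^{\varepsilon}\|_{L^{p}(\Omega)}\le\eta$ for all $\varepsilon$, and since $|M(w(x))|\le\|w(x,\cdot)\|_{\infty}$ one also gets $\|\widetilde{u}-\widetilde{v}\|_{L^{p}(\Omega)}\le\eta$; by H\"older these errors contribute at most $2\eta\|g\|_{L^{p'}(\Omega)}$, uniformly in $\varepsilon$. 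It then remains to check $\int_{\Omega}v^{\varepsilon}g\,d\lambda\to\int_{\Omega}\widetilde{v}\,g\,d\lambda$: but $v^{\varepsilon}$ and $\widetilde{v}$ are supported in the fixed compact set $K=\mathrm{supp}\,v\subset\Omega$ and are uniformly bounded in $L^{\infty}$ by (\ref{eq4.4}), so, as $\lambda(K)<+\infty$ makes $g\chi_{K}\in L^{1}(\Omega)$, part (i) applied to $v$ (tested against $g\chi_{K}$) yields the convergence. Letting $\eta\to0$ finishes (ii).

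The hard part is the reduction of (i) to elementary tensors: I need a finite sum $\sum_{i}\varphi_{i}\otimes v_{i}$ approximating $u$ uniformly in $x$ while keeping every support inside a single compact subset of $\Omega$. This is precisely the classical statement that $\mathcal{K}(\Omega)\otimes A$ is dense in $\mathcal{K}(\Omega;A)$ for uniform convergence with fixed-compact supports, and it is the only step needing genuine care; once it is available, the transfer of the $L^{\infty}$-weak$\ast$ convergence $v_{i}\circ H_{\varepsilon}\to M(v_{i})$ across the multipliers $\varphi_{i}g$, the bound $|M|\le\|\cdot\|_{\infty}$, and the estimate (\ref{eq4.4}) of Lemma \ref{lem4.2}, together with standard density, carry out the rest, while the passage from (i) to (ii) is routine.
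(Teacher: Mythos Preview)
Your proof is correct and follows essentially the same route as the paper: first establish (i) on elementary tensors $\mathcal{K}(\Omega)\otimes A$ using the very definition of $\Pi^{\infty}$, extend to $\mathcal{K}(\Omega;A)$ by the density of the tensor product in the inductive limit topology, and then pass to (ii) by the density of $\mathcal{K}(\Omega;A)$ in $L^{p}(\Omega;A)$ combined with the uniform bound (\ref{eq4.4}). The only minor slip is the reference to (\ref{eq4.4}) for the uniform $L^{\infty}$ bound on $v^{\varepsilon}$ in part (ii); that bound comes directly from $|v^{\varepsilon}(x)|\le\|v(x,\cdot)\|_{\infty}\le\sup_{x}\|v(x,\cdot)\|_{\infty}$ for $v\in\mathcal{K}(\Omega;A)$, not from (\ref{eq4.4}).
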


\begin{proof}
We first prove (i). Let $\mathcal{K}\left( \Omega \right) \otimes A$ be the
space of all complex functions $\psi $ on $\Omega \times X$ of the form 
\begin{equation*}
\psi \left( x,y\right) =\sum \varphi _{i}\left( x\right) w_{i}\left(
y\right) \qquad \left( x\in \Omega ,\text{ }y\in X\right)
\end{equation*}%
with a summation of finitely many terms, $\varphi _{i}\in \mathcal{K}\left(
\Omega \right) $, $w_{i}\in A$. According to the definition of $\Pi ^{\infty
}\left( X,\mathcal{H},\lambda \right) $, it is clear that (i) holds true
with $\mathcal{K}\left( \Omega \right) \otimes A$ in place of $\mathcal{K}%
\left( \Omega ;A\right) $. But this implies the whole of (i) in an obvious
way because $\mathcal{K}\left( \Omega \right) \otimes A$ is dense in $%
\mathcal{K}\left( \Omega ;A\right) $ (provided with the inductive limit
topology) (see, e.g., \cite{bib5}, p.46). It remains to check (ii). But this
follows by (i) and use of the density of $\mathcal{K}\left( \Omega ;A\right) 
$ in $L^{p}\left( \Omega ;A\right) $ (the way of proceeding is a routine
exercise left to the reader).
\end{proof}

\section{ABSORPTIVE CONTINUOUS $\mathbb{R}$-GROUP ACTIONS ON\ LOCALLY
COMPACT ABELIAN GROUPS}

In all that follows, $G$ denotes a locally compact abelian group additively
written, not reduced to its neutral element.

\subsection{\textbf{Basic results}}

We start with one simple but fundamental result.

\begin{proposition}
\label{pr5.1} Let $E$ be an $\mathbb{R}$-group, and let $\mathcal{H=}\left(
H_{\varepsilon }\right) _{\varepsilon \in E}$ be an absorptive continuous
action of $E$ on $G$. We suppose moreover that $\mathcal{H}$ is compatible
with the group operation in $G$, i.e., $H_{\varepsilon }\left( x+y\right)
=H_{\varepsilon }\left( x\right) +H_{\varepsilon }\left( y\right) $ for $x$, 
$y\in G$, $\varepsilon \in E$. Then:

(i) The center of $\mathcal{H}$ coincides with the neutral element of $G$.

(ii) $G$ is metrizable.

(iii) $G$ is $\sigma $-compact, noncompact and nondiscrete.

(iv) Haar measure on $G$ is nontrivial and homogeneous (for $\mathcal{H}$).
\end{proposition}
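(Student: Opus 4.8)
The plan is to take the four parts in order, since each one feeds into the next. For (i), the center $\omega$ is by definition invariant under every $H_\varepsilon$; but since $H_\varepsilon$ is a group automorphism of $G$, we have $H_\varepsilon(0) = 0$, so $0$ is also an invariant point. By the uniqueness of the center (Proposition \ref{pr2.2}(ii)), $\omega = 0$. So I would simply invoke compatibility to get $H_\varepsilon(0)=0$ and then quote Proposition \ref{pr2.2}.

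For (ii), the key is Corollary \ref{co2.4}, which gives a \emph{countable} neighbourhood base at $\omega = 0$ consisting of elementary sets. A locally compact group with a countable neighbourhood base at the identity is first countable, hence (by the Birkhoff--Kakutani theorem on metrizability of topological groups) metrizable. I would state this, citing the standard metrization theorem for topological groups. For (iii), this is essentially immediate from Corollary \ref{co2.1}, which already asserts that $X$ (here $G$) is noncompact, nondiscrete, and $\sigma$-compact under the standing hypothesis that $\mathcal{H}$ is an absorptive continuous action — no use of the group structure is even needed here, so it is just a citation.

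The substantive part is (iv): that Haar measure $\lambda$ on $G$ is homogeneous for $\mathcal{H}$, i.e.\ satisfies property (H). The idea is that for each fixed $\varepsilon \in E$, the pushforward $H_\varepsilon(\lambda)$ is again a translation-invariant positive Radon measure on $G$. Indeed, for $a \in G$ and $\varphi \in \mathcal{K}(G)$, using $H_\varepsilon(x+y)=H_\varepsilon(x)+H_\varepsilon(y)$ one checks that translating the argument of $\varphi$ by $H_\varepsilon(a)$ corresponds, after the change of variables $x \mapsto x + a$, to translation-invariance of $\lambda$; since $H_\varepsilon$ is a bijection of $G$, every element of $G$ arises as $H_\varepsilon(a)$, so $H_\varepsilon(\lambda)$ is invariant under all translations. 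By uniqueness of Haar measure up to a positive scalar, $H_\varepsilon(\lambda) = c(\varepsilon)\lambda$ for some $c(\varepsilon) > 0$ (the constant is nonzero and finite because $H_\varepsilon(\lambda)$ is again a nonzero Radon measure). I also need to record that $\lambda$ is nontrivial: it is nonzero by definition of Haar measure, and it is not $\delta_0$ because $G$ is nondiscrete (a Dirac mass is not translation-invariant on a nondiscrete group, or directly: $\lambda(\{0\}) = 0$, which follows from Proposition \ref{pr3.1}(\ref{eq3.3}) once homogeneity is known, or more elementarily because a nondiscrete locally compact group has no atoms in its Haar measure).

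The main obstacle, such as it is, is making the change-of-variables computation in (iv) clean: one must be careful that $H_\varepsilon$ being an automorphism (continuous, bijective, with continuous inverse $H_{\varepsilon^{-1}}$) is what legitimizes the substitution and guarantees $H_\varepsilon(\lambda)$ is a genuine Radon measure, and one should note that the properness/well-definedness of $H_\varepsilon(\lambda)$ is automatic since $H_\varepsilon$ is a homeomorphism. Everything else is bookkeeping: assembling (i)--(iii) from the already-proven Corollaries \ref{co2.1} and \ref{co2.4} plus the Birkhoff--Kakutani metrization theorem.
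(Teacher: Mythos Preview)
Your proposal is correct and follows essentially the same route as the paper: (i) via invariance of $0$ and Proposition~\ref{pr2.2}, (ii) via Corollary~\ref{co2.4} plus the Birkhoff--Kakutani metrization theorem (the paper cites Bourbaki for the same fact), (iii) directly from Corollary~\ref{co2.1}, and (iv) by observing that $H_\varepsilon(\lambda)$ is again a Haar measure and invoking uniqueness. For nontriviality in (iv), the paper uses exactly your second option---that Haar measure on a nondiscrete group is atomless---so avoid the circular appeal to Proposition~\ref{pr3.1} and go straight to that.
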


\begin{proof}
The neutral element of $G$ is clearly an invariant point (for $\mathcal{H}$%
). But there is only one invariant point which is the center of $\mathcal{H}$
(see Proposition \ref{pr2.2}). This shows (i). By combining this with
Corollary \ref{co2.4}, we see that the neutral element of $G$ has a
countable base of neighbourhoods, which implies (ii) (see, e.g., \cite{bib6}%
, chap.IX, p.23); (iii) follows by Corollary \ref{co2.1}. Finally, let $%
\lambda $ be Haar measure on $G$. Evidently $\lambda $ is nonzero. On the
other hand, because $G$ is nondiscrete, we have $\lambda \left( \left\{
\omega \right\} \right) =0$ (see, e.g., \cite{bib8}, p.242), where $\omega $
is the neutral element of $G$ (coinciding with the center of $\mathcal{H}$).
Thus $\lambda \neq \delta _{\omega }$, hence $\lambda $ is nontrivial. Now,
for fixed $\varepsilon \in E$, it is clear that $H_{\varepsilon }\left(
\lambda \right) $ is a Haar measure on $G$, since $H_{\varepsilon }$ is a
topological (group) automorphism of $G$. Hence, by unicity (up to a
multiplicative constant) of Haar measure, there is some $c\left( \varepsilon
\right) >0$ such that $H_{\varepsilon }\left( \lambda \right) =c\left(
\varepsilon \right) \lambda $, which shows (iv).
\end{proof}

Let $E$ be an $\mathbb{R}$-group, and let $\mathcal{H=}\left( H_{\varepsilon
}\right) _{\varepsilon \in E}$ be an absorptive continuous action of $E$ on $%
G$, which we moreover assume to be compatible with the group operation in $G$%
. As usual, $\widehat{G}$ denotes the dual group of $G$, that is, the set of
all continuous homomorphisms of $G$ into the unit circle $\mathbb{U}=\left\{
z\in \mathbb{C}:\left\vert z\right\vert =1\right\} $. Points in $\widehat{G}$
are referred to as the continuous characters of $G$. Provided with
multiplication (the product of two continuous characters is defined in an
obvious way) and with the topology of compact convergence, $\widehat{G}$ is
a locally compact abelian group. The value of $\mathcal{\gamma }\in \widehat{%
G}$ at some point $x\in G$ will be denoted by $\left\langle \mathcal{\gamma }%
,x\right\rangle $. Recalling that $\omega $ denotes the center of $\mathcal{H%
}$ which is also the neutral element of $G$, we will use the symbol $%
\widehat{\omega }$ to denote the identity of $\widehat{G}$, i.e., $\widehat{%
\omega }$ is the continuous character of $G$ defined by $\left\langle 
\widehat{\omega },x\right\rangle =1$ for $x\in G$. We also recall that the
dual of $H_{\varepsilon }$ is defined to be the map $\widehat{H}%
_{\varepsilon }:\widehat{G}\rightarrow \widehat{G}$ such that $\langle 
\widehat{H}_{\varepsilon }\left( \mathcal{\gamma }\right) ,x\rangle
=\left\langle \mathcal{\gamma },H_{\varepsilon }\left( x\right)
\right\rangle $\quad $(\mathcal{\gamma }\in \widehat{G}$, $x\in G)$. It is
immediate that $\widehat{H}_{\varepsilon }$ is a topological automorphism of 
$\widehat{G}$, and further the family $\widehat{\mathcal{H}}=(\widehat{H}%
_{\varepsilon })_{\varepsilon \in E}$ is an action of $E$ on $\widehat{G}$
which is compatible with the group operation in $\widehat{G}$. We call $%
\widehat{\mathcal{H}}$ the dual of the action $\mathcal{H}$. Our purpose is
precisely to establish that the dual action $\widehat{\mathcal{H}}$ is
continuous and absorptive. Let us observe beforehand that $\widehat{G}\neq
\left\{ \widehat{\omega }\right\} $, since $G\neq \left\{ \omega \right\} $
(see \cite{bib14}, p. 38, Corollaire 2.3).

\begin{proposition}
\label{pr5.2} The dual action $\widehat{\mathcal{H}}=(\widehat{H}%
_{\varepsilon })_{\varepsilon \in E}$ is continuous and absorptive.
\end{proposition}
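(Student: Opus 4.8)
The plan is to verify the two requirements in the definition of an absorptive continuous action (Definition~\ref{def2.3}) for $\widehat{\mathcal{H}}=(\widehat{H}_{\varepsilon})_{\varepsilon\in E}$, with candidate center $\widehat{\omega}$, the identity of $\widehat{G}$. The one device used throughout is that a neighbourhood base at a character $\gamma_{1}\in\widehat{G}$ (compact-open topology) consists of the sets $W(\gamma_{1},K,\delta)=\{\gamma\in\widehat{G}:|\langle\gamma,x\rangle-\langle\gamma_{1},x\rangle|<\delta\text{ for all }x\in K\}$ with $K\subset G$ compact and $\delta>0$, and that $W(\gamma_{1},K,\delta)=\gamma_{1}W(\widehat{\omega},K,\delta)$ since $|\langle\gamma,x\rangle-\langle\gamma_{1},x\rangle|=|\langle\gamma\gamma_{1}^{-1},x\rangle-1|$. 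Via $\langle\widehat{H}_{\varepsilon}(\gamma),x\rangle=\langle\gamma,H_{\varepsilon}(x)\rangle$ every statement about $\widehat{H}_{\varepsilon}$ becomes one about $H_{\varepsilon}$, to be handled with the facts already at our disposal: $H:E\times G\to G$ is continuous, $\omega=0$ is the (invariant) neutral element of $G$, and every compact subset of $G$ is bounded for $\mathcal{H}$ (Proposition~\ref{pr2.3}(ii)), hence absorbed by every neighbourhood of $\omega$.

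For continuity I would fix $(\varepsilon_{0},\gamma_{0})$, a compact $K\subset G$ and $\delta>0$, pick a compact neighbourhood $J$ of $\varepsilon_{0}$ in $E$, and set $L=H(J\times K)$, a compact subset of $G$. Then for $\varepsilon\in J$, $x\in K$ and $\gamma\in\gamma_{0}W(\widehat{\omega},L,\delta/2)$ one has $|\langle\gamma,H_{\varepsilon}(x)\rangle-\langle\gamma_{0},H_{\varepsilon}(x)\rangle|=|\langle\gamma\gamma_{0}^{-1},H_{\varepsilon}(x)\rangle-1|<\delta/2$; and since $(\varepsilon,x)\mapsto\langle\gamma_{0},H_{\varepsilon}(x)\rangle$ is continuous on the compact set $J\times K$, the tube lemma yields a smaller neighbourhood $J'$ of $\varepsilon_{0}$ on which $|\langle\gamma_{0},H_{\varepsilon}(x)\rangle-\langle\gamma_{0},H_{\varepsilon_{0}}(x)\rangle|<\delta/2$ uniformly in $x\in K$. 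Summing, $\widehat{H}_{\varepsilon}(\gamma)\in\widehat{H}_{\varepsilon_{0}}(\gamma_{0})W(\widehat{\omega},K,\delta)$ for $(\varepsilon,\gamma)$ in a neighbourhood of $(\varepsilon_{0},\gamma_{0})$, which is the asserted continuity.

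For absorptiveness I would start from $\widehat{V}=W(\widehat{\omega},K,\delta)$ and an arbitrary $\gamma_{0}\in\widehat{G}$, noting that $\widehat{H}_{\varepsilon^{-1}}(\gamma)\in\widehat{V}$ means exactly $|\langle\gamma,z\rangle-1|<\delta$ for all $z\in H_{\varepsilon^{-1}}(K)$. Fix a relatively compact open neighbourhood $F$ of $\omega=0$ (such exist, $G$ being locally compact); boundedness of $K$ provides $\alpha_{1}\in E$ with $H_{\varepsilon^{-1}}(K)\subset F$ for $\varepsilon\leq\alpha_{1}$, so $L:=\overline{F}$ is a compact set containing all those images. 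Using continuity of $\gamma_{0}$ at $0$, choose a neighbourhood $W$ of $0$ with $|\langle\gamma_{0},z\rangle-1|<\delta/2$ for $z\in W$; boundedness of $K$ again provides $\alpha_{2}\in E$ with $H_{\varepsilon^{-1}}(K)\subset W$ for $\varepsilon\leq\alpha_{2}$. Then, with $\alpha=\min\{\alpha_{1},\alpha_{2}\}\in E$ ($E$ being totally ordered) and $\widehat{U}=\gamma_{0}W(\widehat{\omega},L,\delta/2)$, for every $\gamma\in\widehat{U}$, every $\varepsilon\leq\alpha$ and every $x\in K$ the point $z=H_{\varepsilon^{-1}}(x)$ lies in $F\cap W\subset L$ and $|\langle\gamma,z\rangle-1|\leq|\langle\gamma\gamma_{0}^{-1},z\rangle-1|+|\langle\gamma_{0},z\rangle-1|<\delta$, so $\widehat{H}_{\varepsilon^{-1}}(\widehat{U})\subset\widehat{V}$ for $\varepsilon\leq\alpha$. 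This gives (ABS) for $\widehat{\mathcal{H}}$ with center $\widehat{\omega}$, which is then its unique center by Proposition~\ref{pr2.2}.

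The step I expect to require the most care is the absorptiveness, specifically the simultaneous use of boundedness of $K$ in two ways: one needs $\bigcup_{\varepsilon\leq\alpha}H_{\varepsilon^{-1}}(K)$ to be relatively compact, so that the single neighbourhood $W(\widehat{\omega},L,\delta/2)$ of $\widehat{\omega}$ controls all the relevant $\varepsilon$ at once, and also $H_{\varepsilon^{-1}}(K)$ to lie inside an arbitrarily small neighbourhood of $0$ for small $\varepsilon$, so that the fixed character $\gamma_{0}$ stays uniformly near $1$ there; reconciling the two by $\alpha=\min\{\alpha_{1},\alpha_{2}\}$ is the heart of the matter. The continuity is comparatively routine once the compact-open neighbourhood base is in hand and the tube lemma is invoked.
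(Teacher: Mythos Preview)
Your proof is correct and follows the same overall strategy as the paper: translate questions about $\widehat{H}_{\varepsilon}$ into questions about $H_{\varepsilon}$ via the compact-open neighbourhood base, then split estimates by the triangle inequality and use compactness on the $G$-side. The continuity arguments are essentially identical (your tube-lemma step plays the same role as the paper's appeal to uniform continuity on $I\times K$ yielding equicontinuity in $\varepsilon$).

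The one genuine difference is in the absorptiveness. You verify (ABS) directly at an arbitrary character $\gamma_{0}$: you use boundedness of $K$ twice, once to trap all the $H_{\varepsilon^{-1}}(K)$ inside a fixed compact $L$ (so that a single neighbourhood $\gamma_{0}W(\widehat{\omega},L,\delta/2)$ works uniformly in $\varepsilon$), and once to push $H_{\varepsilon^{-1}}(K)$ into a small neighbourhood of $0$ where $\gamma_{0}$ is near $1$. The paper instead shows that every \emph{compact} set $\Gamma\subset\widehat{G}$ is absorbed by every neighbourhood of $\widehat{\omega}$, which implies (ABS) since $\widehat{G}$ is locally compact; the uniformity over $\gamma\in\Gamma$ is obtained in one stroke via Ascoli's theorem (a compact set of characters is equicontinuous). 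Your route is slightly more elementary in that it avoids Ascoli, at the cost of the two-step $\alpha=\min\{\alpha_{1},\alpha_{2}\}$ argument; the paper's route is a bit cleaner conceptually but invokes a heavier tool. Both are perfectly sound.
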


\begin{proof}
Let us show the continuity. Specifically, the aim is to establish that the
map $\left( \varepsilon ,\mathcal{\gamma }\right) \rightarrow \widehat{H}%
_{\varepsilon }\left( \mathcal{\gamma }\right) $ sends continuously $E\times 
\widehat{G}$ into $\widehat{G}$, or equivalently that $\left( \varepsilon ,%
\mathcal{\gamma }\right) \rightarrow \widehat{H}_{\varepsilon ^{-1}}\left( 
\mathcal{\gamma }\right) $ maps continuously $E\times \widehat{G}$ into $%
\widehat{G}$. To do this, fix freely $\left( \varepsilon _{0},\mathcal{%
\gamma }_{0}\right) \in E\times \widehat{G}$. Let $V$ be a neighbourhood of $%
\widehat{H}_{\varepsilon _{0}^{-1}}\left( \mathcal{\gamma }_{0}\right) $ in $%
\widehat{G}$. We may assume without loss of generality that%
\begin{equation*}
V=\left\{ \mathcal{\gamma }\in \widehat{G}:\left\vert \left\langle \mathcal{%
\gamma },x\right\rangle -\langle \widehat{H}_{\varepsilon _{0}^{-1}}\left( 
\mathcal{\gamma }_{0}\right) ,x\rangle \right\vert \leq \alpha \text{ for
all }x\in K\right\} \text{,}
\end{equation*}%
where $K$ is a compact set in $G$ and $\alpha $ is a positive real number.
Now, recalling that each continuous character of $G$ is uniformly
continuous, we see that we may consider a neighbourhood $W$ of $\omega $ in $%
G$ such that $\left\vert \left\langle \mathcal{\gamma }_{0},x\right\rangle
-\left\langle \mathcal{\gamma }_{0},y\right\rangle \right\vert \leq \frac{%
\alpha }{2}$ for all $x$, $y\in G$ with $x-y\in W$. On the other hand, let $%
I $ be a compact neighbourhood of $\varepsilon _{0}$ in $E$. Observe that
the map $\left( \varepsilon ,x\right) \rightarrow H_{\varepsilon
^{-1}}\left( x\right) $ is uniformly continuous of $I\times K$ (a compact
set) into $G$; thus, as $x$ ranges over $K$, the maps $\varepsilon
\rightarrow H_{\varepsilon ^{-1}}\left( x\right) $ of $I$ into $G$ form an
equicontinuous family at $\varepsilon _{0}$ (see \cite{bib6}, Chap.X, p.13,
Proposition 2). This yields a neighbourhood $J$ of $\varepsilon _{0}$ in $E$
such that $H_{\varepsilon ^{-1}}\left( x\right) -H_{\varepsilon
_{0}^{-1}}\left( x\right) \in W$ for all $\varepsilon \in J$ and all $x\in K$%
. Therefore%
\begin{equation*}
\left\vert \left\langle \mathcal{\gamma }_{0},H_{\varepsilon ^{-1}}\left(
x\right) \right\rangle -\langle \mathcal{\gamma }_{0},H_{\varepsilon
_{0}^{-1}}\left( x\right) \rangle \right\vert \leq \frac{\alpha }{2}\qquad
\left( \varepsilon \in J\text{, }x\in K\right) \text{.}
\end{equation*}%
With this in mind, we next consider a compact neighbourhood $F$ of $\omega $
in $G$ and some $\eta \mathfrak{\in }E$ with $\eta >\varepsilon _{0}$, such
that $H_{\varepsilon ^{-1}}\left( K\right) \subset F$ for $\varepsilon \leq
\eta $ (let $F_{0}$ be a compact neighbourhood of $\omega $ in $G$, and
according to part (ii) of Proposition \ref{pr2.2}, let $\eta _{0}\in E$ be
such that $H_{\varepsilon ^{-1}}\left( K\right) \subset F_{0}$ for $%
\varepsilon \leq \eta _{0}$. Choose a positive integer $n$ such that $n\eta
_{0}>\varepsilon _{0}$. Then $\eta =n\eta _{0}$ and $F=H_{n^{-1}}\left(
F_{0}\right) $ fit the statement). Finally, we put $J_{0}=J\cap \left(
\theta ,\eta \right) $ (a neighbourhood of $\varepsilon _{0}$ in $E$) and $%
U_{0}=\left\{ \mathcal{\gamma }\in \widehat{G}:\left\vert \left\langle 
\mathcal{\gamma },x\right\rangle -\left\langle \mathcal{\gamma }%
_{0},x\right\rangle \right\vert \leq \frac{\alpha }{2}\text{ for }x\in
F\right\} $ (a neighbourhood of $\mathcal{\gamma }_{0}$ in $\widehat{G}$).
Then $\widehat{H}_{\varepsilon ^{-1}}\left( U_{0}\right) \subset V$ for $%
\varepsilon \in J_{0}$. In other words, we have $\widehat{H}_{\varepsilon
^{-1}}\left( \mathcal{\gamma }\right) \in V$ for $\left( \varepsilon ,%
\mathcal{\gamma }\right) \in J_{0}\times U_{0}$. The continuity of $\widehat{%
\mathcal{H}}$ follows.

\noindent We now check the absorptiveness. Clearly it is enough to verify
that any compact set in $\widehat{G}$ is $\widehat{\mathcal{H}}$-absorbed by
any neighbourhood of $\widehat{\omega }$. To this end let $\Gamma $ be a
compact set in $\widehat{G}$ and $V$ be a neighbourhood of $\widehat{\omega }
$ in $\widehat{G}$. For an obvious reason we may assume without loss of
generality that $V=\left\{ \mathcal{\gamma }\in \widehat{G}:\left\vert
\left\langle \mathcal{\gamma },x\right\rangle -1\right\vert \leq \alpha 
\text{ for }x\in K\right\} $, where $K$ is some compact set in $G$ and $%
\alpha $ some positive real. By Ascoli's theorem, $\Gamma $ is an
equicontinuous family of complex maps on $G$ and hence there is a
neighbourhood $U$ of $\omega $ in $G$ such that $\left\vert \left\langle 
\mathcal{\gamma },y\right\rangle -1\right\vert \leq \alpha $ for $y\in U$
and $\mathcal{\gamma }\in \Gamma $. Finally, choose $\eta \in E$ such that $%
H_{\varepsilon ^{-1}}\left( K\right) \subset U$ for $\varepsilon \leq \eta $
(part (ii) of Proposition \ref{pr2.2}). Then, from all that one quickly
deduces that $\widehat{H}_{\varepsilon ^{-1}}\left( \Gamma \right) \subset V$
for $\varepsilon \leq \eta $. This completes the proof.
\end{proof}

In the sequel $\lambda $ denotes Haar measure on $G$.

\begin{proposition}
\label{pr5.3} Let the notation and hypotheses be as above. Then the mean
value $M$ on $\left( G,\mathcal{H},\lambda \right) $ is translation
invariant. Specifically, for $u\in \Pi ^{\infty }\left( G,\mathcal{H}%
,\lambda \right) $ and $a\in G$, we have $\tau _{a}u\in \Pi ^{\infty }\left(
G,\mathcal{H},\lambda \right) $ and further $M\left( \tau _{a}u\right)
=M\left( u\right) $, where $\tau _{a}u\left( x\right) =u\left( x-a\right) $%
\quad $\left( x\in G\right) $.
\end{proposition}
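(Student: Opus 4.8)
The plan is to reduce the claim to the defining weak-$\ast$ convergence for membership in $\Pi^{\infty}\left( G,\mathcal H,\lambda\right)$, using only that each $H_{\varepsilon}$ is additive on $G$ and that Haar measure $\lambda$ is translation invariant. Fix $a\in G$ and $u\in\Pi^{\infty}\left( G,\mathcal H,\lambda\right)$, and write $b_{\varepsilon}=H_{\varepsilon^{-1}}\left( a\right)$ for $\varepsilon\in E$. First I would record the algebraic identity that drives everything: since $H_{\varepsilon}$ is additive and $H_{\varepsilon}\left( b_{\varepsilon}\right)=a$, we have $H_{\varepsilon}\left( x\right)-a=H_{\varepsilon}\left( x-b_{\varepsilon}\right)$ for all $x\in G$, whence
\[
\left( \tau_{a}u\right)\bigl(H_{\varepsilon}\left( x\right)\bigr)=u\bigl(H_{\varepsilon}\left( x-b_{\varepsilon}\right)\bigr)=\left( u\circ H_{\varepsilon}\right)\left( x-b_{\varepsilon}\right)=\bigl(\tau_{b_{\varepsilon}}\left( u\circ H_{\varepsilon}\right)\bigr)\left( x\right),
\]
that is, $\left( \tau_{a}u\right)\circ H_{\varepsilon}=\tau_{b_{\varepsilon}}\left( u\circ H_{\varepsilon}\right)$. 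Two elementary facts will be used: $\tau_{a}u\in\mathcal B\left( G\right)$ with $\left\Vert\tau_{a}u\right\Vert_{\infty}=\left\Vert u\right\Vert_{\infty}$, and $b_{\varepsilon}\to\omega$ (the neutral element $0$ of $G$) as $\varepsilon\to\theta$, the latter being precisely Remark \ref{rem2.2} applied at the center $\omega$ of $\mathcal H$.

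Next I would test against an arbitrary $\varphi\in L^{1}\left( G\right)$. By translation invariance of $\lambda$ (substitute $z=x-b_{\varepsilon}$),
\[
\int_{G}\bigl(\left( \tau_{a}u\right)\circ H_{\varepsilon}\bigr)\varphi\,d\lambda=\int_{G}\left( u\circ H_{\varepsilon}\right)\left( z\right)\varphi\left( z+b_{\varepsilon}\right)d\lambda\left( z\right)=\int_{G}\left( u\circ H_{\varepsilon}\right)\tau_{-b_{\varepsilon}}\varphi\,d\lambda,
\]
and I would split this as
\[
\int_{G}\left( u\circ H_{\varepsilon}\right)\left(\tau_{-b_{\varepsilon}}\varphi-\varphi\right)d\lambda+\int_{G}\left( u\circ H_{\varepsilon}\right)\varphi\,d\lambda .
\]
The first integral is bounded in modulus by $\left\Vert u\right\Vert_{\infty}\left\Vert\tau_{-b_{\varepsilon}}\varphi-\varphi\right\Vert_{L^{1}\left( G\right)}$, which tends to $0$ because translation is continuous in $L^{1}\left( G\right)$ (a classical fact for a locally compact group equipped with Haar measure) and $b_{\varepsilon}\to 0$ as $\varepsilon\to\theta$; the second integral tends to $\widetilde{u}\int_{G}\varphi\,d\lambda$ by the hypothesis $u\in\Pi^{\infty}\left( G,\mathcal H,\lambda\right)$, i.e.\ $u\circ H_{\varepsilon}\to\widetilde{u}$ in $L^{\infty}\left( G\right)$-weak $\ast$. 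Hence $\int_{G}\bigl(\left(\tau_{a}u\right)\circ H_{\varepsilon}\bigr)\varphi\,d\lambda\to\widetilde{u}\int_{G}\varphi\,d\lambda$ for every $\varphi\in L^{1}\left( G\right)$, so $\left(\tau_{a}u\right)\circ H_{\varepsilon}\to\widetilde{u}$ in $L^{\infty}\left( G\right)$-weak $\ast$. This says exactly that $\tau_{a}u\in\Pi^{\infty}\left( G,\mathcal H,\lambda\right)$ and $M\left(\tau_{a}u\right)=\widetilde{u}=M\left( u\right)$, as required.

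The computation is routine; the only point deserving a moment's care is the composition of the two limiting processes, ``$\varepsilon\to\theta$'' (a limit along the downward-directed filter of the sets $\left\{\varepsilon\in E:\varepsilon\le\alpha\right\}$) and ``$c\to 0$ in $G$''. The clean way to handle it: given $\eta>0$, choose a symmetric neighbourhood $W$ of $0$ in $G$ with $\left\Vert\tau_{c}\varphi-\varphi\right\Vert_{L^{1}\left( G\right)}\le\eta$ for all $c\in W$, and then use Remark \ref{rem2.2} to obtain $\alpha\in E$ with $b_{\varepsilon}\in W$ for $\varepsilon\le\alpha$. Beyond assembling these standard ingredients --- additivity of $H_{\varepsilon}$, translation invariance of $\lambda$, continuity of translation on $L^{1}\left( G\right)$, and the defining property of $\Pi^{\infty}$ --- I do not anticipate any real obstacle.
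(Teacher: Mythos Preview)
Your proof is correct and follows essentially the same route as the paper: both make the substitution $y=x-H_{\varepsilon^{-1}}(a)$ (using that $H_{\varepsilon}$ is additive and $\lambda$ is translation invariant) to transfer the translation from $u$ onto the test function, then control the resulting error $\int (u\circ H_{\varepsilon})(\tau_{-b_{\varepsilon}}\varphi-\varphi)\,d\lambda$ via $b_{\varepsilon}=H_{\varepsilon^{-1}}(a)\to\omega$. The only cosmetic difference is that the paper tests against $\varphi\in\mathcal K(G)$ and bounds the error directly by uniform continuity of $\varphi$, whereas you test against $\varphi\in L^{1}(G)$ and invoke the continuity of translation on $L^{1}(G)$; these are equivalent formulations of the same estimate.
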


\begin{proof}
Clearly we are through if we can show that\ as $\varepsilon \rightarrow
\theta $,%
\begin{equation*}
\int \varphi \left( x\right) \left( \tau _{a}u\right) \left( H_{\varepsilon
}\left( x\right) \right) d\lambda \left( x\right) \rightarrow M\left(
u\right) \int \varphi \left( x\right) d\lambda \left( x\right)
\end{equation*}%
for any $\varphi \in \mathcal{K}\left( G\right) $. For this purpose, fix one
such $\varphi $. Let $B$ be a compact neighbourhood\ of $\omega $, and let $%
c>0$\ be a constant such that $\left\Vert u\right\Vert _{\infty }\lambda
\left( Supp\varphi -B\right) \leq c$. Now, let $\eta >0$. By uniform
continuity, consider a neighbourhood $V$ of $\omega $ such that $\left\vert
\varphi \left( x\right) -\varphi \left( y\right) \right\vert \leq \frac{\eta 
}{c}$ whenever $x$, $y\in G$ with $x-y\in V$. Furthermore, fix $\alpha \in E$
such that $H_{\varepsilon ^{-1}}\left( a\right) \in V\cap \left( -B\right) $
whenever $\varepsilon \leq \alpha $. Then%
\begin{equation*}
\left\vert \int \varphi \left( x\right) u\left( H_{\varepsilon }\left(
x\right) -a\right) d\lambda \left( x\right) -\int \varphi \left( x\right)
u\left( H_{\varepsilon }\left( x\right) \right) d\lambda \left( x\right)
\right\vert
\end{equation*}%
\begin{equation*}
=\left\vert \int u\left( H_{\varepsilon }\left( y\right) \right) \left[
\varphi \left( y+H_{\varepsilon ^{-1}}\left( a\right) \right) -\varphi
\left( y\right) \right] d\lambda \left( y\right) \right\vert \leq \eta
\end{equation*}%
provided $\varepsilon \leq \alpha $. Hence the proposition follows.
\end{proof}

Thus, the mean value on $\left( G,\mathcal{H},\lambda \right) $ is a mean
value on the locally compact group $G$ (see \cite{bib20}, Definition \ref%
{def2.1}). On the other hand, the mean value on $\left( G,\mathcal{H}%
,\lambda \right) $ generalizes the mean value for ponderable functions on $%
\mathbb{R}^{N}$, and there is no serious difficulty in showing that the
results established in subsection 4.1 of \cite{bib20} carry over mutatis
mutandis to the present setting. In particular there is the following

\begin{proposition}
\label{pr5.4} Let $f\in L^{1}\left( G\right) $. If $u\in \Pi ^{\infty
}\left( G,\mathcal{H},\lambda \right) $, then $f\ast u\in \Pi ^{\infty
}\left( G,\mathcal{H},\lambda \right) $ and 
\begin{equation*}
M\left( f\ast u\right) =M\left( u\right) \int f\left( y\right) d\lambda
\left( y\right) \text{.}
\end{equation*}
\end{proposition}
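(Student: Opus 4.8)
The plan is to first dispose of the soft structural facts, then to reduce the assertion to the case where $f$ is compactly supported, and finally to settle that case by a change of variables against test functions.

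First I would record the elementary facts: for $f\in L^{1}(G)$ and $u\in\mathcal{B}(G)$ the convolution $f\ast u$ is bounded and uniformly continuous, with $\left\Vert f\ast u\right\Vert _{\infty}\leq\left\Vert f\right\Vert _{L^{1}(G)}\left\Vert u\right\Vert _{\infty}$, so $f\ast u\in\mathcal{B}(G)$ and the map $f\mapsto f\ast u$ is continuous from $L^{1}(G)$ into $\mathcal{B}(G)$. Since $\Pi^{\infty}(G,\mathcal{H},\lambda)$ is a closed subspace of $\mathcal{B}(G)$, since $M$ is a continuous linear form on it (it satisfies $\left\vert M(v)\right\vert \leq\left\Vert v\right\Vert _{\infty}$), since $f\mapsto\int f\,d\lambda$ is continuous on $L^{1}(G)$, and since $\mathcal{K}(G)$ is dense in $L^{1}(G)$, it is enough to prove the proposition for $f\in\mathcal{K}(G)$; the general case then follows by choosing $f_{n}\in\mathcal{K}(G)$ with $f_{n}\to f$ in $L^{1}(G)$ and passing to the limit in $f_{n}\ast u\in\Pi^{\infty}(G,\mathcal{H},\lambda)$ and $M(f_{n}\ast u)=M(u)\int f_{n}\,d\lambda$.

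So let $f\in\mathcal{K}(G)$ with compact support $S$. The family $\left\{ (f\ast u)\circ H_{\varepsilon}\right\} _{\varepsilon\leq e}$ is bounded in $L^{\infty}(G)$, so, $\mathcal{K}(G)$ being dense in $L^{1}(G)$, it suffices to prove that $\int\varphi(x)(f\ast u)(H_{\varepsilon}(x))\,d\lambda(x)\to M(u)\left(\int f\,d\lambda\right)\left(\int\varphi\,d\lambda\right)$ as $\varepsilon\to\theta$, for every $\varphi\in\mathcal{K}(G)$; this simultaneously yields $f\ast u\in\Pi^{\infty}(G,\mathcal{H},\lambda)$ and the claimed value of $M(f\ast u)$. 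Fix $\varphi\in\mathcal{K}(G)$. By Fubini's theorem (legitimate since $\left\vert \varphi(x)f(y)u(H_{\varepsilon}(x)-y)\right\vert \leq\left\Vert \varphi\right\Vert _{\infty}\left\Vert u\right\Vert _{\infty}\left\vert f(y)\right\vert \mathcal{\chi}_{Supp\,\varphi}(x)$ is integrable on $G\times G$), the left-hand side equals $\int f(y)F_{\varepsilon}(y)\,d\lambda(y)$, where $F_{\varepsilon}(y)=\int\varphi(x)u(H_{\varepsilon}(x)-y)\,d\lambda(x)$. Putting $s=H_{\varepsilon^{-1}}(y)$ and using that $H_{\varepsilon}$ is an additive bijection of $G$, we get $H_{\varepsilon}(x)-y=H_{\varepsilon}(x)-H_{\varepsilon}(s)=H_{\varepsilon}(x-s)$, whence, by translation invariance of Haar measure, $F_{\varepsilon}(y)=\int\varphi(x+s)(u\circ H_{\varepsilon})(x)\,d\lambda(x)$.

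Now I would write $F_{\varepsilon}(y)=\int[\varphi(x+s)-\varphi(x)](u\circ H_{\varepsilon})(x)\,d\lambda(x)+\int\varphi(x)(u\circ H_{\varepsilon})(x)\,d\lambda(x)$. The second integral does not depend on $y$ and tends to $M(u)\int\varphi\,d\lambda$ as $\varepsilon\to\theta$, by the very definition of $\Pi^{\infty}(G,\mathcal{H},\lambda)$; the first is bounded in absolute value by $\left\Vert u\right\Vert _{\infty}\left\Vert \varphi(\cdot+s)-\varphi\right\Vert _{L^{1}(G)}$. The one step requiring a moment's care — and the main obstacle — is to check that $H_{\varepsilon^{-1}}(y)\to\omega$ uniformly for $y\in S$: but $S$ is bounded for $\mathcal{H}$, so by Proposition \ref{pr2.3}(ii), given any neighbourhood $V$ of $\omega$ there is $\alpha\in E$ with $H_{\varepsilon^{-1}}(S)\subset V$ for $\varepsilon\leq\alpha$, and combining this with the continuity at the neutral element of $s\mapsto\left\Vert \varphi(\cdot+s)-\varphi\right\Vert _{L^{1}(G)}$ (continuity of translation in $L^{1}$) gives $\sup_{y\in S}\left\vert F_{\varepsilon}(y)-M(u)\int\varphi\,d\lambda\right\vert \to0$ as $\varepsilon\to\theta$. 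Since $f$ is supported in $S$ and $\lambda$-integrable there, it follows that $\int f(y)F_{\varepsilon}(y)\,d\lambda(y)\to\left(\int f\,d\lambda\right)\left(M(u)\int\varphi\,d\lambda\right)$, which finishes the argument. Apart from this uniform convergence, which is delivered directly by the boundedness/absorptiveness machinery of Section 2, everything is routine: Fubini, translation invariance of Haar measure, and the two density reductions through $\mathcal{K}(G)\subset L^{1}(G)$.
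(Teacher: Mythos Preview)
Your argument is correct. The paper does not actually prove this proposition: it simply writes ``This follows by a simple adaptation of Proposition 4.1 of \cite{bib20}'', deferring to the author's earlier paper on mean values on locally compact abelian groups. Your self-contained proof is precisely the natural adaptation one would expect, and it mirrors the mechanism the paper itself uses in the proof of Proposition~\ref{pr5.3} (translation invariance of $M$): there, for a single translate $\tau_{a}u$, one substitutes $y=x-H_{\varepsilon^{-1}}(a)$ and uses that $H_{\varepsilon^{-1}}(a)\to\omega$ together with uniform continuity of the test function $\varphi$. You carry out the same change of variables pointwise in the convolution parameter $y$, upgrade the pointwise convergence $H_{\varepsilon^{-1}}(y)\to\omega$ to uniformity over $y\in S=\mathrm{Supp}\,f$ via Proposition~\ref{pr2.3}(ii), and then integrate against $f$. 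The two density reductions through $\mathcal{K}(G)\subset L^{1}(G)$ are routine and correctly justified. In short, your proof is what the paper's one-line reference is pointing at, made explicit.
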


\begin{proof}
This follows by a simple adaptation of Proposition \ref{pr4.1} of \cite%
{bib20}.
\end{proof}

\subsection{\textbf{Almost periodic functions}}

Let the notation and hypotheses be as above. Our goal here is to show that
the mean value on $\left( G,\mathcal{H},\lambda \right) $ is an extension of
the mean value for the almost periodic continuous complex functions on $G$.
For the benefit of the reader we recall that a map $u:G\rightarrow \mathbb{C}
$ is called a \textit{Bohr almost periodic function, }or an almost periodic
continuous complex function on $G$, if $u$ lies in $\mathcal{B}\left(
G\right) $ and further if the translates $\tau _{a}u$ $\left( a\in G\right) $
form a relatively compact set in $\mathcal{B}\left( G\right) $. The space of
such functions is commonly denoted by $AP\left( G\right) $, and is a Banach
space under the supremum norm. More precisely, $AP\left( G\right) $ with the
supremum norm and the usual algebra operations in $\mathcal{B}\left(
G\right) $ is a commutative $\mathcal{C}^{\ast }$-algebra with identity. On
the other hand, given $u\in AP\left( G\right) $, it can be shown that the
closed convex hull of the set $\left\{ \tau _{a}u:a\in G\right\} $ in $%
\mathcal{B}\left( G\right) $ contains one and only one constant $m\left(
u\right) $ called the mean of $u$ (see, e.g., \cite{bib14}, p.94, and \cite%
{bib20}). This yields a map $u\rightarrow m\left( u\right) $ of $AP\left(
G\right) $ into $\mathbb{C}$, called the mean value on $G$ for $AP\left(
G\right) $ (see \cite{bib20}).

\begin{proposition}
\label{pr5.5} Let the notation and hypotheses be as before. Then, the mean
value $M$ on $\left( G,\mathcal{H},\lambda \right) $ is an extension of the
mean value $m$ for $AP\left( G\right) $. Specifically, we have $AP\left(
G\right) \subset \Pi ^{\infty }\left( G,\mathcal{H},\lambda \right) $ and $%
M\left( u\right) =m\left( u\right) $ for $u\in AP\left( G\right) $.
\end{proposition}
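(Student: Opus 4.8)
The proof that $M$ on $\left(G,\mathcal{H},\lambda\right)$ extends $m$ on $AP\left(G\right)$ will rely on the classical structure theory of almost periodic functions together with Propositions \ref{pr5.3} and \ref{pr5.4}. The key observation is that $AP\left(G\right)$ is the closure in $\mathcal{B}\left(G\right)$ (supremum norm) of the linear span of the continuous characters $\mathcal{\gamma}\in\widehat{G}$, so by linearity and the fact that $\Pi^{\infty}\left(G,\mathcal{H},\lambda\right)$ is a closed subspace of $\mathcal{B}\left(G\right)$ on which $M$ is continuous (with $\left\vert M\left(u\right)\right\vert\leq\left\Vert u\right\Vert_{\infty}$), it suffices to treat a single character $u=\mathcal{\gamma}$. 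For characters the mean $m$ is explicit: $m\left(\mathcal{\gamma}\right)=1$ if $\mathcal{\gamma}=\widehat{\omega}$ (the trivial character) and $m\left(\mathcal{\gamma}\right)=0$ otherwise. So the whole problem reduces to showing $\mathcal{\gamma}\in\Pi^{\infty}\left(G,\mathcal{H},\lambda\right)$ with $M\left(\mathcal{\gamma}\right)$ equal to that value.

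\smallskip

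The case $\mathcal{\gamma}=\widehat{\omega}$ is trivial since $\widehat{\omega}$ is the constant function $1$ and $M\left(1\right)=1$. For a nontrivial character $\mathcal{\gamma}\neq\widehat{\omega}$, I must show that $\mathcal{\gamma}\circ H_{\varepsilon}\rightarrow 0$ in $L^{\infty}\left(G\right)$-weak $\ast$ as $\varepsilon\rightarrow\theta$, i.e. $\int\varphi\left(x\right)\left\langle\mathcal{\gamma},H_{\varepsilon}\left(x\right)\right\rangle d\lambda\left(x\right)\rightarrow 0$ for every $\varphi\in\mathcal{K}\left(G\right)$, equivalently (by density) for every $\varphi\in L^{1}\left(G\right)$. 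Now observe that $x\mapsto\left\langle\mathcal{\gamma},H_{\varepsilon}\left(x\right)\right\rangle=\langle\widehat{H}_{\varepsilon}\left(\mathcal{\gamma}\right),x\rangle$ is itself the continuous character $\widehat{H}_{\varepsilon}\left(\mathcal{\gamma}\right)$ of $G$. Hence $\int\varphi\left(x\right)\langle\widehat{H}_{\varepsilon}\left(\mathcal{\gamma}\right),x\rangle d\lambda\left(x\right)=\widehat{\varphi}\left(\widehat{H}_{\varepsilon}\left(\mathcal{\gamma}\right)\right)$, the Fourier transform of $\varphi\in L^{1}\left(G\right)$ evaluated at the point $\widehat{H}_{\varepsilon}\left(\mathcal{\gamma}\right)\in\widehat{G}$. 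By the Riemann–Lebesgue lemma $\widehat{\varphi}\in\mathcal{C}_{\infty}\left(\widehat{G}\right)$, so it is enough to show that $\widehat{H}_{\varepsilon}\left(\mathcal{\gamma}\right)\rightarrow\infty$ in $\widehat{G}$ as $\varepsilon\rightarrow\theta$, i.e. that $\widehat{H}_{\varepsilon}\left(\mathcal{\gamma}\right)$ eventually leaves every compact subset of $\widehat{G}$. But $\widehat{H}_{\varepsilon}=\bigl(\widehat{H}_{\varepsilon^{-1}}\bigr)^{-1}$ and the dual action $\widehat{\mathcal{H}}=(\widehat{H}_{\varepsilon})_{\varepsilon\in E}$ is, by Proposition \ref{pr5.2}, an absorptive continuous $\mathbb{R}$-group action on $\widehat{G}$ with center $\widehat{\omega}$; since $\mathcal{\gamma}\neq\widehat{\omega}$, Corollary \ref{co2.2} applied to $\widehat{\mathcal{H}}$ gives exactly $\lim_{\varepsilon\rightarrow\theta}\widehat{H}_{\varepsilon}\left(\mathcal{\gamma}\right)=\infty$ in the Alexandroff compactification of $\widehat{G}$. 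This yields $M\left(\mathcal{\gamma}\right)=0=m\left(\mathcal{\gamma}\right)$ for every nontrivial $\mathcal{\gamma}$.

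\smallskip

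Finally I assemble the pieces. Let $u\in AP\left(G\right)$ be arbitrary and let $\eta>0$. Choose a trigonometric polynomial $P=\sum_{i}c_{i}\mathcal{\gamma}_{i}$ (finite sum, $\mathcal{\gamma}_{i}\in\widehat{G}$) with $\left\Vert u-P\right\Vert_{\infty}\leq\eta$. Each $\mathcal{\gamma}_{i}\in\Pi^{\infty}\left(G,\mathcal{H},\lambda\right)$ by the previous paragraph, hence $P\in\Pi^{\infty}\left(G,\mathcal{H},\lambda\right)$ (it is a subspace) with $M\left(P\right)=\sum_{i}c_{i}M\left(\mathcal{\gamma}_{i}\right)=\sum_{\mathcal{\gamma}_{i}=\widehat{\omega}}c_{i}=m\left(P\right)$, the last equality because $m$ is linear and agrees with $M$ on characters. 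Since $\Pi^{\infty}\left(G,\mathcal{H},\lambda\right)$ is closed in $\mathcal{B}\left(G\right)$ and $\left\Vert u-P\right\Vert_{\infty}\leq\eta$ is arbitrary, $u\in\Pi^{\infty}\left(G,\mathcal{H},\lambda\right)$; and since both $M$ and $m$ are norm-continuous on their domains with $\left\vert M\left(v\right)\right\vert\leq\left\Vert v\right\Vert_{\infty}$, $\left\vert m\left(v\right)\right\vert\leq\left\Vert v\right\Vert_{\infty}$, letting $\eta\rightarrow 0$ gives $M\left(u\right)=m\left(u\right)$.

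\smallskip

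\textbf{Main obstacle.} The one genuinely substantive point is reducing the weak-$\ast$ limit to the asymptotic behaviour of $\widehat{H}_{\varepsilon}\left(\mathcal{\gamma}\right)$ in $\widehat{G}$ and invoking Riemann–Lebesgue; everything else is soft functional analysis and density. In particular one must be a little careful that the test functions $\varphi$ may be taken in $L^{1}\left(G\right)$ rather than only in $\mathcal{K}\left(G\right)$, which is legitimate since the family $\left(\mathcal{\gamma}\circ H_{\varepsilon}\right)_{\varepsilon}$ is uniformly bounded by $1$ in $L^{\infty}\left(G\right)$, so weak-$\ast$ convergence tested against $\mathcal{K}\left(G\right)$ (dense in $L^{1}\left(G\right)$) already gives it tested against all of $L^{1}\left(G\right)$. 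An alternative, avoiding the dual action entirely, would be to use translation invariance of $M$ (Proposition \ref{pr5.3}): for a nontrivial character there is $a\in G$ with $\left\langle\mathcal{\gamma},a\right\rangle\neq 1$, and then $M\left(\mathcal{\gamma}\right)=M\left(\tau_{a}\mathcal{\gamma}\right)=\left\langle\mathcal{\gamma},-a\right\rangle^{-1}$ wait — more cleanly, $\tau_{a}\mathcal{\gamma}=\left\langle\mathcal{\gamma},-a\right\rangle\mathcal{\gamma}$, so $M\left(\mathcal{\gamma}\right)=\left\langle\mathcal{\gamma},-a\right\rangle M\left(\mathcal{\gamma}\right)$ forces $M\left(\mathcal{\gamma}\right)=0$ — but this second route still requires knowing a priori that $\mathcal{\gamma}\in\Pi^{\infty}$, which is most transparently obtained via the dual-action argument above; so I would present the Fourier-analytic proof.
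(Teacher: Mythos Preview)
Your proof is correct and follows essentially the same approach as the paper: reduce to characters by density of trigonometric polynomials in $AP(G)$ and closedness of $\Pi^{\infty}$, handle the trivial character directly, and for a nontrivial character $\gamma$ recognize the test integral as a Fourier transform evaluated at $\widehat{H}_{\varepsilon}(\gamma)$, then invoke Proposition~\ref{pr5.2} with Corollary~\ref{co2.2} to send this point to infinity and conclude via Riemann--Lebesgue. Your final ``assembly'' paragraph and the remark on the translation-invariance alternative are extra commentary, but the core argument matches the paper's line for line.
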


\begin{proof}
The whole problem reduces to showing that for $u\in AP\left( G\right) $,%
\begin{equation}
u\circ H_{\varepsilon }\rightarrow m\left( u\right) \text{ in }L^{\infty
}\left( G\right) \text{-weak }\ast \text{ as }\varepsilon \rightarrow \theta 
\text{.}  \label{eq5.1}
\end{equation}%
Let $\mathcal{P}$ be the set of all finite linear combinations of continuous
characters of $G$. Recalling that the continuous characters of $G$ lie in $%
AP\left( G\right) $ and further $AP\left( G\right) $ coincides with the
closure of $\mathcal{P}$ in $\mathcal{B}\left( G\right) $ (see, e.g., \cite%
{bib14}, Chap.5, and \cite{bib16}, Chap.10), we see that it is enough to
show (\ref{eq5.1}) for $u\in \mathcal{P}$ or better still, for $u=\mathcal{%
\gamma }\in \widehat{G}$. In other words, the proposition is established if
we can verify that, given any arbitrary $f\in L^{1}\left( G\right) $ ($f$
independent of $\varepsilon $, of course), we have as $\varepsilon
\rightarrow \theta $,%
\begin{equation*}
\int \left\langle \mathcal{\gamma },H_{\varepsilon }\left( x\right)
\right\rangle f\left( x\right) d\lambda \left( x\right) \rightarrow m\left( 
\mathcal{\gamma }\right) \int f\left( x\right) d\lambda \left( x\right)
\end{equation*}%
or equivalently%
\begin{equation*}
\int \overline{\left\langle \mathcal{\gamma },H_{\varepsilon }\left(
x\right) \right\rangle }f\left( x\right) d\lambda \left( x\right)
\rightarrow m\left( \overline{\mathcal{\gamma }}\right) \int f\left(
x\right) d\lambda \left( x\right)
\end{equation*}%
for all $\mathcal{\gamma }\in \widehat{G}$, where $\overline{\mathcal{\gamma 
}}$ denotes the map $x\rightarrow \overline{\left\langle \mathcal{\gamma }%
,x\right\rangle }$ (complex conjugate of $\left\langle \mathcal{\gamma }%
,x\right\rangle $) of $G$ into $\mathbb{C}$. This convergence result is
trivial if $\mathcal{\gamma }=\widehat{\omega }$ (the identity of $\widehat{G%
}$), since $m\left( \widehat{\omega }\right) =1$. So we assume in the sequel
that $\mathcal{\gamma }\neq \widehat{\omega }$. Recalling that we have in
this case $m\left( \overline{\mathcal{\gamma }}\right) =0$ (this is
well-known), we see that we are through if we can check that $%
\lim_{\varepsilon \rightarrow \theta }\mathcal{F}f\left( \widehat{H}%
_{\varepsilon }\left( \mathcal{\gamma }\right) \right) =0$, where $\mathcal{F%
}f$ denotes the Fourier transform of $f$. But $\lim_{\varepsilon \rightarrow
\theta }\widehat{H}_{\varepsilon }\left( \mathcal{\gamma }\right) =\infty $
(the point at infinity of the Alexandroff compactification of $\widehat{G}$%
), as is straightforward by combining Proposition \ref{pr5.2} with Corollary %
\ref{co2.2}. Hence the desired result follows by the Riemann-Lebesgue lemma.
\end{proof}

Thus, Proposition \ref{pr4.2} is valid with $X=G$, $\mathcal{H}$ compatible
with the group operation in $G$, $\lambda $ Haar measure on $G$, $A=AP\left(
G\right) $.

\section{HOMOGENIZATION\ ALGEBRAS}

The aim here is to point out a close connection between homogenization
theory and absorptive continuous $\mathbb{R}$-group actions.

Let $\left( X,\mathcal{H},\lambda \right) $ be a homogenizer with $\mathcal{%
H=}\left( H_{\varepsilon }\right) _{\varepsilon \in E}$, where $E$ is the
acting $\mathbb{R}$-group. The basic notation is as before. In particular $M$
is the mean value on $\left( X,\mathcal{H},\lambda \right) $, and the
entities $e$, $\theta $ attached to $E$ are defined in subsection 2.1.

\begin{definition}
\emph{\label{def6.1} We call a }homogenization algebra\emph{\ (or an
H-algebra) on }$\left( X,\mathcal{H},\lambda \right) $\emph{, any closed
subalgebra }$A$\emph{\ of }$\mathcal{B}\left( X\right) $\emph{\ with the
following properties:}

\emph{(HA)}$_{1}$\emph{\ }$A$\emph{\ with the supremum norm is separable.}

\emph{(HA)}$_{2}$\emph{\ }$A$\emph{\ contains the constants.}

\emph{(HA)}$_{3}$\emph{\ If }$u\in A$\emph{, then }$\overline{u}$\emph{\ (}$%
\overline{u}$\emph{\ the complex conjugate of }$u$\emph{)}

\emph{(HA)}$_{4}$\emph{\ }$A\subset D\left( M\right) =\Pi ^{\infty }\left( X,%
\mathcal{H},\lambda \right) $\emph{.}
\end{definition}

In the sequel the $H$-algebra $A$ is assumed to be equipped with the
supremum norm. Thus, $A$ is a commutative $\mathcal{C}^{\ast }$-algebra with
identity. We denote the spectrum of $A$ by $\Delta \left( A\right) $ (i.e., $%
\Delta \left( A\right) $ is the set of all nonzero multiplicative linear
forms on $A$), and we endow $\Delta \left( A\right) $ with\ the relative
weak $\ast $ topology on $A^{\prime }$ (topological dual of $A$). As is
classical, $\Delta \left( A\right) $ is a compact space (see, e.g., \cite%
{bib8}, p.304, and \cite{bib16}, p.71). Furthermore, in view of (HA)$_{1}$, $%
\Delta \left( A\right) $ is metrisable. The Gelfand transformation on $A$
will be denoted by $\mathcal{G}$. It is worth recalling that $\mathcal{G}$
is the mapping of $A$ into $\mathcal{C}\left( \Delta \left( A\right) \right) 
$ such that $\mathcal{G}\left( u\right) \left( s\right) =\left\langle
s,u\right\rangle $\quad $\left( s\in \Delta \left( A\right) \text{, }u\in
A\right) $, where the brackets denote the duality pairing between $A^{\prime
}$ and $A$. It is also worth knowing that $\mathcal{G}$ is an isometric
isomorphism of the $\mathcal{C}^{\ast }$-algebra $A$ onto the $\mathcal{C}%
^{\ast }$-algebra $\mathcal{C}\left( \Delta \left( A\right) \right) $ (see, 
\cite{bib16}, p.277).

The appropriate measure on $\Delta \left( A\right) $ is the so-called $M$%
-measure for $A$ denoted below by $\beta $.

\begin{proposition}
\label{pr6.1} There exists a unique Radon measure $\beta $ on $\Delta \left(
A\right) $ such that $M\left( u\right) =\int_{\Delta \left( A\right) }%
\mathcal{G}\left( u\right) \left( s\right) d\beta \left( s\right) $ for all $%
u\in A$. Furthermore, $\beta $ is positive and of total mass $1$.
\end{proposition}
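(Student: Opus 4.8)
The plan is to carry the mean value $M$ over to the compact space $\Delta(A)$ by means of the Gelfand transformation and then apply the Riesz representation theorem there. By (HA)$_{4}$ we have $A\subset D(M)=\Pi^{\infty}(X,\mathcal{H},\lambda)$, so $M$ restricts to a linear form on $A$; moreover this restriction is bounded, since every mean value satisfies $\left\vert M(u)\right\vert \leq \left\Vert u\right\Vert_{\infty}$ on its domain. Since $\mathcal{G}:A\rightarrow \mathcal{C}(\Delta(A))$ is an isometric isomorphism of $\mathcal{C}^{\ast}$-algebras (as recalled above), it is invertible, and I set $L=M\circ \mathcal{G}^{-1}$, a bounded linear form on $\mathcal{C}(\Delta(A))$ with $\left\Vert L\right\Vert \leq 1$.

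The crucial step is to verify that $L$ is positive. Let $\psi \in \mathcal{C}(\Delta(A))$ with $\psi \geq 0$ and write $\psi =\mathcal{G}(u)$ for a (unique) $u\in A$. Because $\mathcal{G}$ is a $\ast$-isomorphism of $\mathcal{C}^{\ast}$-algebras, $u$ is a positive element of $A$; as $A$ is a unital $\mathcal{C}^{\ast}$-subalgebra of $\mathcal{B}(X)$, spectral permanence shows that $u$ is also a positive element of $\mathcal{B}(X)$, i.e.\ $u(x)\geq 0$ for every $x\in X$. Since $u\in A\subset D(M)$, property (MV)$_{2}$(i) gives $L(\psi)=M(u)\geq 0$. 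Thus $L$ is a positive linear form on $\mathcal{C}(\Delta(A))$, and the Riesz representation theorem on the compact (metrisable) space $\Delta(A)$ yields a unique positive Radon measure $\beta$ with $L(\psi)=\int_{\Delta(A)}\psi\,d\beta$ for all $\psi \in \mathcal{C}(\Delta(A))$. Taking $\psi =\mathcal{G}(u)$ gives $M(u)=\int_{\Delta(A)}\mathcal{G}(u)(s)\,d\beta(s)$ for all $u\in A$, as required. The total mass is obtained by taking $u$ equal to the identity of $A$, whose Gelfand transform is the constant function $1$ on $\Delta(A)$ (the Gelfand transformation being unital): $\beta(\Delta(A))=\int_{\Delta(A)}1\,d\beta=L(1)=M(1)=1$ by (MV)$_{2}$(ii). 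For uniqueness, if $\beta'$ is any Radon measure with $M(u)=\int_{\Delta(A)}\mathcal{G}(u)\,d\beta'$ for all $u\in A$, then $\int \psi\,d\beta'=\int \psi\,d\beta$ for every $\psi \in \mathcal{C}(\Delta(A))$, since $\mathcal{G}$ maps $A$ onto $\mathcal{C}(\Delta(A))$; the uniqueness clause of the Riesz theorem then forces $\beta'=\beta$.

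I expect the one genuinely delicate point to be the identification of positivity in the abstract $\mathcal{C}^{\ast}$-algebra $A$ with pointwise nonnegativity of functions on $X$ — this is exactly what makes (MV)$_{2}$(i) applicable — and it is handled by spectral permanence for the unital $\mathcal{C}^{\ast}$-subalgebra $A\subset \mathcal{B}(X)$. Everything else (the linearity of $M$ on $A$, the passage through $\mathcal{G}$, the invocation of the Riesz theorem, and the computation of the total mass) is routine.
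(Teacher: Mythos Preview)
Your proof is correct and follows essentially the same route as the paper: transfer $M$ to $\mathcal{C}(\Delta(A))$ via $\mathcal{G}^{-1}$, observe that the resulting functional is positive, continuous, and takes the value $1$ at the constant $1$, and then invoke the Riesz representation theorem. The paper compresses all of this into a single sentence, whereas you spell out the positivity step (via spectral permanence) and the uniqueness explicitly; your added detail is sound and does not deviate from the paper's argument.
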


\begin{proof}
The functional $\varphi \rightarrow M\left( \mathcal{G}^{-1}\left( \varphi
\right) \right) \quad \left( \varphi \in \mathcal{C}\left( \Delta \left(
A\right) \right) \right) $ is a positive continuous linear form on $\mathcal{%
C}\left( \Delta \left( A\right) \right) $ attaining the value $1$ on the
constant function $1$. Therefore the proposition follows.
\end{proof}

In the sequel it is always assumed that $\Delta \left( A\right) $ is
provided with the measure $\beta $.

We present below a few examples of $H$-algebras.

\begin{example}
\emph{\label{ex6.1} We assume that }$X$\emph{\ is metrizable. Let} $\mathcal{%
B}_{\infty }\left( X\right) $ \emph{be as in Example \ref{ex4.1}. We claim
that this is a homogenization algebra on }$\left( X,\mathcal{H},\lambda
\right) $\emph{. Indeed,} \emph{letting }$A=\mathcal{B}_{\infty }\left(
X\right) $, \emph{it is clear that we only need to check (HA)}$_{1}$\emph{.
Because }$X$\emph{\ is both metrizable and }$\sigma $\emph{-compact
(Corollary \ref{co2.1}), its topology has a countable basis} \emph{and hence}
$\mathcal{B}_{\infty }\left( X\right) $ \emph{under the supremum norm is
separable (see \cite{bib6}, Chap.IX, pp.18, 21, and Chap.X, p.25).}
\end{example}

\begin{example}
\emph{\label{ex6.2}\ Let the setting be as in Example 4.2. Then, the space} $%
A=\mathcal{C}_{per}\left( Y\right) $ \emph{is an H-algebra on} $\left( 
\mathbb{R}^{N},\mathcal{H},\lambda \right) $.
\end{example}

\begin{example}
\emph{\label{ex6.3}} \emph{Let }$\left( G,\mathcal{H},\lambda \right) $\emph{%
\ be a homogenizer in which: }$G$\emph{\ is a locally compact abelian group
not reduced to its neutral element, the action} $\mathcal{H=}\left(
H_{\varepsilon }\right) _{\varepsilon \in E}$ \emph{is compatible with the
group operation, and }$\lambda $ \emph{is Haar measure on }$G$\emph{\ (see
Section 5). The space }$AP\left( G\right) $\emph{\ introduced in subsection
5.2 is a closed subalgebra of} $\mathcal{B}\left( G\right) $ \emph{%
satisfying\ conditions (HA)}$_{2}$\emph{-(HA)}$_{4}$\emph{\ (see Proposition %
\ref{pr5.4}). Unfortunately }$AP\left( G\right) $\emph{\ fails to carry out
(HA)}$_{1}$\emph{\ and hence we are led to restrict ourselves to some
specific subalgebras.} \emph{Let} $\mathcal{R}$ \emph{be a countable
subgroup of }$\widehat{G}$\emph{. We define }$AP_{\mathcal{R}}\left(
G\right) =\left\{ u\in AP\left( G\right) :Sp\left( u\right) \subset \mathcal{%
R}\right\} $\emph{\ with} $Sp\left( u\right) =\left\{ \mathcal{\gamma }\in 
\widehat{G}:m\left( \overline{\mathcal{\gamma }}u\right) \neq 0\right\} $ 
\emph{(spectrum of }$u$\emph{), where }$m$\emph{\ denotes the mean value on }%
$G$\emph{\ for} $AP\left( G\right) $.\emph{\ Note that the definition of }$%
AP_{\mathcal{R}}\left( G\right) $\emph{\ makes sense because the spectrum of
any function in }$AP\left( G\right) $\emph{\ turns out\ to be a countable
set. This being so, let }$\mathcal{P}_{\mathcal{R}}$ \emph{be the linear span%
}\ \emph{of} $\mathcal{R}$, \emph{that is, the space of all finite linear
combinations of elements of} $\mathcal{R}$. $AP_{\mathcal{R}}\left( G\right) 
$ \emph{coincides with the closure of }$\mathcal{P}_{\mathcal{R}}$ \emph{in} 
$\mathcal{B}\left( G\right) $ \emph{(see \cite{bib14}, p.93, Proposition
5.4) and} \emph{therefore} $AP_{\mathcal{R}}\left( G\right) $ \emph{is
separable under the supremum norm. Thus, it becomes an} \emph{elementary
exercise to check that} $AP_{\mathcal{R}}\left( G\right) $ \emph{is an }$H$%
\emph{-algebra on }$\left( G,\mathcal{H},\lambda \right) $\emph{.}
\end{example}

We will conclude the present section by discussing sigma-convergence.

Let $\Omega $ be a nonempty open set\ in $X$, with the same notation and
assumptions as in Section 4. On the other hand, let $S$ be a family in $E$
admitting $\theta $ as an accumulation point. For example $S=E$. In the
particular case where $S=\left( \varepsilon _{n}\right) _{n\in \mathbb{N}}$
with $\varepsilon _{n}\in E$, $\varepsilon _{n}\leq e$ and $\varepsilon
_{n}\rightarrow \theta $ as $n\rightarrow +\infty $, we will refer to $S$ as
a \textit{fundamental sequence.} Finally, let $1\leq p<+\infty $, and let $A$
be a given homogenization algebra on $\left( X,\mathcal{H},\lambda \right) $.

\begin{definition}
\emph{\label{def6.2}} \emph{A sequence }$\left( u_{\varepsilon }\right)
_{\varepsilon \in S}$\emph{\ in }$L^{p}\left( \Omega \right) $\emph{\ (}$%
\Omega $\emph{\ provided with the induced measure }$\lambda _{\Omega
}=\lambda \mathfrak{\mid }_{\Omega }$ \emph{if }$\Omega \neq X$\emph{) is
said to weakly }$\Sigma $\emph{-converge in }$L^{p}\left( \Omega \right) $%
\emph{\ to some }$u_{0}\in L^{p}\left( \Omega \times \Delta \left( A\right)
\right) $\emph{\ if as }$S\ni \varepsilon \rightarrow \theta $\emph{,}%
\begin{equation*}
\int_{\Omega }u_{\varepsilon }\left( x\right) \psi ^{\varepsilon }\left(
x\right) d\lambda \left( x\right) \rightarrow \int \int_{\Omega \times
\Delta \left( A\right) }u_{0}\left( x,s\right) \widehat{\psi }\left(
x,s\right) d\lambda \left( x\right) d\beta \left( s\right)
\end{equation*}%
\emph{for all }$\psi \in L^{p^{\prime }}\left( \Omega ;A\right) $\emph{\ (}$%
\frac{1}{p^{\prime }}=1-\frac{1}{p}$\emph{), where }$\psi ^{\varepsilon }$%
\emph{\ is defined as in (\ref{eq4.3}) (thanks to Lemma \ref{lem4.2}), and }$%
\widehat{\psi }$\emph{\ is the function in }$L^{p^{\prime }}\left( \Omega ;%
\mathcal{C}\left( \Delta \left( A\right) \right) \right) $\emph{\ given by} $%
\widehat{\psi }\left( x\right) =\mathcal{G}\left( \psi \left( x\right)
\right) $ \emph{(}$\mathcal{G}$ \emph{the Gelfand transformation on }$A$%
\emph{).}
\end{definition}

This notion generalizes the notion of the same name introduced earlier in
the framework of numerical spaces (see \cite{bib18}). It also generalizes
the well known concept of two-scale convergence (see \cite[17, 26]{bib1})
and further it can be shown that most of the results available in the
two-scale convergence setting carry over mutatis mutandis to the present
setting. We will here focus on the $\Sigma $-compactness theorem
generalizing the so-called two-scale compactness theorem (see \cite[1, 17]%
{bib19}).

\begin{theorem}
\label{th6.1} ($\Sigma $-compactness theorem). Assume that $1<p<+\infty $.
Suppose $X$ is metrizable. Let $S$ be a fundamental sequence and let a
sequence $\left( u_{\varepsilon }\right) _{\varepsilon \in S}$ be bounded in 
$L^{p}\left( \Omega \right) $. Then, a subsequence $S^{\prime }$ can be
extracted from $S$ such that the corresponding sequence $\left(
u_{\varepsilon }\right) _{\varepsilon \in S^{\prime }}$ is weakly $\Sigma $%
-convergent in $L^{p}\left( \Omega \right) $.
\end{theorem}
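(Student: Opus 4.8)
The plan is to run the classical two-scale compactness argument in the present abstract framework. First I would use the boundedness hypothesis together with Lemma \ref{lem4.2}: for each $\varepsilon \in S$ the map $\psi \mapsto \int_{\Omega} u_{\varepsilon}\psi^{\varepsilon}\,d\lambda$ is a linear form on $L^{p'}(\Omega;A)$ (with $\frac{1}{p'}=1-\frac{1}{p}$) whose norm is at most $\Vert u_{\varepsilon}\Vert_{L^{p}(\Omega)}\leq C$, a bound independent of $\varepsilon$. Since $A$ is separable by (HA)$_{1}$ and, $X$ being metrizable and $\sigma$-compact (Corollary \ref{co2.1}), $L^{p'}(\Omega)$ hence $L^{p'}(\Omega;A)$ is separable, the Banach--Alaoglu theorem lets me extract a subsequence $S'$ of $S$ along which these forms converge weak $\ast$ to a linear form $L$ on $L^{p'}(\Omega;A)$ with $\Vert L\Vert\leq C$. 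Thus $\int_{\Omega}u_{\varepsilon}\psi^{\varepsilon}\,d\lambda\rightarrow L(\psi)$ for every $\psi\in L^{p'}(\Omega;A)$ as $S'\ni\varepsilon\rightarrow\theta$.

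The crucial step is the estimate
\[
\left\vert L(\psi)\right\vert \leq C\left\Vert \widehat{\psi}\right\Vert_{L^{p'}(\Omega\times\Delta(A))}\qquad\left(\psi\in\mathcal{K}(\Omega;A)\right).
\]
To obtain it, observe that for $\psi\in\mathcal{K}(\Omega;A)$ the map $x\mapsto\left\vert\psi(x,\cdot)\right\vert^{p'}$, defined through the $\mathcal{C}^{\ast}$-functional calculus in $A$ (i.e. $\left\vert w\right\vert^{p'}=(\overline{w}w)^{p'/2}$), again lies in $\mathcal{K}(\Omega;A)$ and has Gelfand transform $x\mapsto\left\vert\widehat{\psi}(x,\cdot)\right\vert^{p'}$. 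Applying Proposition \ref{pr4.2}(i) to this function, using Proposition \ref{pr6.1}, and noting that $(\left\vert\psi\right\vert^{p'})^{\varepsilon}=\left\vert\psi^{\varepsilon}\right\vert^{p'}$ while everything is supported in the fixed compact set $\mathrm{Supp}\,\psi$, I would deduce that
\[
\int_{\Omega}\left\vert\psi^{\varepsilon}(x)\right\vert^{p'}d\lambda(x)\longrightarrow\int_{\Omega}\int_{\Delta(A)}\left\vert\widehat{\psi}(x,s)\right\vert^{p'}d\lambda(x)\,d\beta(s)=\left\Vert\widehat{\psi}\right\Vert_{L^{p'}(\Omega\times\Delta(A))}^{p'}
\]
as $\varepsilon\rightarrow\theta$. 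Combining this with $\left\vert\int_{\Omega}u_{\varepsilon}\psi^{\varepsilon}\,d\lambda\right\vert\leq\Vert u_{\varepsilon}\Vert_{L^{p}(\Omega)}\Vert\psi^{\varepsilon}\Vert_{L^{p'}(\Omega)}$ and passing to the limit gives the displayed bound.

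To conclude, since $\mathcal{G}$ is an isometric $\ast$-isomorphism of $A$ onto $\mathcal{C}(\Delta(A))$, the map $\psi\mapsto\widehat{\psi}$ is an isometry of $L^{p'}(\Omega;A)$ into $L^{p'}(\Omega;\mathcal{C}(\Delta(A)))$, which in turn embeds norm-decreasingly into $L^{p'}(\Omega\times\Delta(A))$ because $\beta$ has total mass $1$. By the Stone--Weierstrass theorem the Gelfand transforms of $\mathcal{K}(\Omega)\otimes A$ are dense in $\mathcal{K}(\Omega\times\Delta(A))$, hence in $L^{p'}(\Omega\times\Delta(A))$ as $p'<+\infty$; so the image of $\mathcal{K}(\Omega;A)$ is dense there. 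The estimate above shows that $\widehat{\psi}\mapsto L(\psi)$ is well defined and bounded, for the $L^{p'}(\Omega\times\Delta(A))$-norm, on this dense subspace, hence extends to a bounded linear form on $L^{p'}(\Omega\times\Delta(A))$; since $1<p<+\infty$ and $\Omega\times\Delta(A)$ is $\sigma$-finite, the Riesz representation theorem yields $u_{0}\in L^{p}(\Omega\times\Delta(A))$ with $L(\psi)=\int\int_{\Omega\times\Delta(A)}u_{0}\widehat{\psi}\,d\lambda\,d\beta$ for all $\psi\in\mathcal{K}(\Omega;A)$. A routine density argument (Lemma \ref{lem4.2} on one side, the isometry on the other) extends this to all $\psi\in L^{p'}(\Omega;A)$, which is precisely the weak $\Sigma$-convergence of $(u_{\varepsilon})_{\varepsilon\in S'}$ to $u_{0}$.

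The extraction in the first step and the density/representation in the last are essentially soft, so the real work lies in the key estimate of the second paragraph: verifying that $\left\vert\psi(x,\cdot)\right\vert^{p'}$ stays inside the $H$-algebra $A$ (which is exactly where the $\mathcal{C}^{\ast}$-structure, not just continuity, is used) and then extracting the \emph{exact} limit of $\Vert\psi^{\varepsilon}\Vert_{L^{p'}(\Omega)}$ from Propositions \ref{pr4.2} and \ref{pr6.1}. One must also keep in mind throughout that $\lambda(\Omega)$ need not be finite, which is why the critical limit is first taken with compactly supported test functions and only afterwards extended by density.
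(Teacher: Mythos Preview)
Your proposal is correct and follows essentially the same route as the paper's own proof: extract a weak-$\ast$ limit of the bounded sequence of linear forms on $L^{p'}(\Omega;A)$ via separability, obtain the key estimate $|L(\psi)|\leq C\|\widehat{\psi}\|_{L^{p'}(\Omega\times\Delta(A))}$ for $\psi\in\mathcal{K}(\Omega;A)$ by applying Proposition~\ref{pr4.2} to $|\psi|^{p'}$, and conclude by density and the Riesz representation theorem. You are in fact more explicit than the paper on two points it leaves implicit, namely that $|\psi(x,\cdot)|^{p'}\in A$ via the $\mathcal{C}^{\ast}$-functional calculus and that the resulting representation extends from $\mathcal{K}(\Omega;A)$ to all of $L^{p'}(\Omega;A)$ by a final density step.
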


\begin{proof}
First of all we note that, because $X$ is metrizable, the topology on $X$ (a 
$\sigma $-compact locally compact space) has a countable basis (use \cite%
{bib6}, Chap. IX, p.21, Corollaire) and hence the same is true of $\Omega $.
Therefore, since $A$ is separable, it follows that $L^{p^{\prime }}\left(
\Omega ;A\right) $ ($\frac{1}{p^{\prime }}=1-\frac{1}{p}$) is separable.

With this in mind, let%
\begin{equation*}
\mathcal{L}_{\varepsilon }\left( \psi \right) =\int_{\Omega }u_{\varepsilon
}\left( x\right) \psi \left( x,H_{\varepsilon }\left( x\right) \right)
d\lambda \left( x\right) \qquad \left( \psi \in L^{p^{\prime }}\left( \Omega
;A\right) \right) \text{,}
\end{equation*}%
where $\varepsilon $ is freely fixed. In view of Lemma \ref{lem4.2}, this
yields a sequence $\left( \mathcal{L}_{\varepsilon }\right) _{\varepsilon
\in S}$ in $[L^{p^{\prime }}\left( \Omega ;A\right) ]^{\prime }$
(topological dual of $L^{p^{\prime }}\left( \Omega ;A\right) $) which is
bounded in the $[L^{p^{\prime }}\left( \Omega ;A\right) ]^{\prime }$-norm.
Hence, by a classical argument (partly based on the separability of the
Banach space $L^{p^{\prime }}\left( \Omega ;A\right) $) one can extract a
subsequence $S^{\prime }$ from $S$ such that $\mathcal{L}_{\varepsilon
}\rightarrow \mathcal{L}$ in $[L^{p^{\prime }}\left( \Omega ;A\right)
]^{\prime }$-weak $\ast $ as $S^{\prime }\ni \varepsilon \rightarrow \theta $%
, or equivalently such that as $S^{\prime }\ni \varepsilon \rightarrow
\theta $, we have $\mathcal{L}_{\varepsilon }\left( \psi \right) \rightarrow
\left\langle L,\widehat{\psi }\right\rangle $ for all $\psi \in L^{p^{\prime
}}\left( \Omega ;A\right) $ ($\widehat{\psi }$ defined above), where $L$ is
the continuous linear form on $L^{p^{\prime }}\left( \Omega ;\mathcal{C}%
\left( \Delta \left( A\right) \right) \right) $ defined by $\left\langle
L,\varphi \right\rangle =\left\langle \mathcal{L},\mathcal{G}^{-1}\circ
\varphi \right\rangle $ for $\varphi \in L^{p^{\prime }}\left( \Omega ;%
\mathcal{C}\left( \Delta \left( A\right) \right) \right) $, $\mathcal{G}%
^{-1} $ being the inverse Gelfand transformation on $A$. The next point is
to characterize the functional $L$. To this end let $\psi \in \mathcal{K}%
\left( \Omega ;A\right) $. It is clear that%
\begin{equation*}
\left\vert \mathcal{L}_{\varepsilon }\left( \psi \right) \right\vert \leq
c\left( \int_{\Omega }\left\vert \psi \left( x,H_{\varepsilon }\left(
x\right) \right) \right\vert ^{p^{\prime }}\mathcal{\chi }_{K}\left(
x\right) d\lambda \left( x\right) \right) ^{\frac{1}{p^{\prime }}}\quad
\left( \varepsilon \in S\right) \text{,}
\end{equation*}%
where $c=\sup_{\varepsilon \in S}\left\Vert u_{\varepsilon }\right\Vert
_{L^{p}\left( \Omega \right) }<+\infty $, $K$ is a compact set in $\Omega $
containing the support of $\psi $, and $\mathcal{\chi }_{K}$ is the
characteristic \ function of $K$ in $\Omega $. By letting $S^{\prime }\ni
\varepsilon \rightarrow \theta $ and applying \ Proposition \ref{pr4.2}
(with $u\left( x,y\right) =\left\vert \psi \left( x,y\right) \right\vert
^{p^{\prime }}$, $x\in \Omega $, $y\in X$) we get 
\begin{equation*}
\left\vert \left\langle L,\widehat{\psi }\right\rangle \right\vert \leq
c\left\Vert \widehat{\psi }\right\Vert _{L^{p^{\prime }}\left( \Omega \times
\Delta \left( A\right) \right) }
\end{equation*}%
for any $\psi \in \mathcal{K}\left( \Omega ;A\right) $. Thus, 
\begin{equation*}
\left\vert \left\langle L,\varphi \right\rangle \right\vert \leq c\left\Vert
\varphi \right\Vert _{L^{p^{\prime }}\left( \Omega \times \Delta \left(
A\right) \right) }
\end{equation*}%
for all $\varphi \in \mathcal{K}\left( \Omega ;\mathcal{C}\left( \Delta
\left( A\right) \right) \right) =\mathcal{K}\left( \Omega \times \Delta
\left( A\right) \right) $. By extension by continuity [based on the density
\ of $\mathcal{K}\left( \Omega ;\mathcal{C}\left( \Delta \left( A\right)
\right) \right) $ in $L^{p^{\prime }}\left( \Omega \times \Delta \left(
A\right) \right) =L^{p^{\prime }}\left( \Omega ;L^{p^{\prime }}\left( \Delta
\left( A\right) \right) \right) $] and use of the Riesz representation
theorem, it follows that%
\begin{equation*}
\left\langle L,\widehat{\psi }\right\rangle =\int \int_{\Omega \times \Delta
\left( A\right) }u_{0}\left( x,s\right) \widehat{\psi }\left( x,s\right)
d\lambda \left( x\right) d\beta \left( s\right)
\end{equation*}%
for all $\psi \in L^{p^{\prime }}\left( \Omega ;A\right) $, where $u_{0}\in
L^{p}\left( \Omega \times \Delta \left( A\right) \right) $. The theorem
follows thereby.
\end{proof}

Theorem \ref{th6.1} generalizes the two-scale compactness theorem in three
major\ directions: 1) the numerical spaces $\mathbb{R}^{n}$ on\ which
classical homogenization theory\ is framed are here replaced by more general
locally compact spaces; 2) the usual group action $\left( \varepsilon
,x\right) \rightarrow \frac{x}{\varepsilon }$ (see Example \ref{ex2.4})
underlying classical homogenization theory is here replaced by more general
absorptive continuous $\mathbb{R}$-group actions; 3) instead of the
classical periodic setting, we have here a general setting associated to the
notion of a homogenization algebra and covering a great variety of
behaviours such as the periodicity, the almost periodicity, and many more
besides.

It appears that the present work lays the foundation of the mathematical
framework that is needed to undertake a systematic study of homogenization
problems on manifolds, Lie groups included.

\end{document}